\documentclass[a4paper,reqno,11pt]{amsart}
\usepackage[margin=1.2in]{geometry}

\usepackage{amssymb, enumerate}
\usepackage[all]{xy}
\usepackage{enumitem}
\usepackage{mathrsfs}
\usepackage{amscd}
\usepackage[%
 setpagesize=false,%
 bookmarks=true,%
 bookmarksdepth=tocdepth,%
 bookmarksnumbered=true,%
 colorlinks=false,%
 pdftitle={},%
 pdfsubject={},%
 pdfauthor={},%
 pdfkeywords={}%
]{hyperref}

\usepackage{xcolor}
\hypersetup{
	colorlinks=true,
	citecolor=blue,
	linkcolor=red,
	urlbordercolor=cyan,
}

\theoremstyle{plain}
\newtheorem{thm}{Theorem}[section] 
\newtheorem{cor}[thm]{Corollary}
\newtheorem{prop}[thm]{Proposition}
\newtheorem{conj}[thm]{Conjecture}
\newtheorem{lem}[thm]{Lemma}
\newtheorem{mainthm}[thm]{Main Theorem}

\theoremstyle{definition} 
\newtheorem{defn}[thm]{Definition}

\newtheorem{question}[thm]{Question}

\newtheorem{rem}[thm]{Remark}

\theoremstyle{remark}

\newtheorem*{acknowledgement}{Acknowledgments}

\def\phi{\varphi}
\def\epsilon{\varepsilon}

\def\to{\longrightarrow}

\newcommand{\sO}{\mathcal{O}}

\newcommand{\F}{\mathbb{F}}
\newcommand{\N}{\mathbb{N}}
\newcommand{\Q}{\mathbb{Q}} 
 
\newcommand{\R}{\mathbb{R}} 
\newcommand{\Z}{\mathbb{Z}}
\newcommand{\PP}{\mathbb{P}}

\newcommand{\bR}{\mathbf{R}} 

\newcommand{\J}{\mathcal{J}}

\newcommand{\ideala}{\mathfrak{a}}

\newcommand{\m}{\mathfrak{m}}

\newcommand{\q}{\mathfrak{q}}

\newcommand{\Bs}{\mathrm{Bs}}
\newcommand{\B}{\mathbf{B}}
\newcommand{\Bp}{\mathbf{B}_{+}}
\newcommand{\Bm}{\mathbf{B}_{-}}
\newcommand{\NNef}{\mathrm{NNef}}
\newcommand{\num}{\mathrm{num}}
\newcommand{\ord}{\mathrm{ord}}
\newcommand{\nord}{\overline{\mathrm{ord}}}
\newcommand{\BCM}{\mathrm{BCM}}

\newcommand{\Tr}{\mathrm{Tr}}

\newcommand{\taubcm}{\tau_{\mathscr{B}}}

\newcommand{\bcmt}{\mathrm{bcmt}}

\newcommand{\gfst}{\mathrm{gfst}}
\newcommand{\alphabcm}{\alpha_{\BCM}}

\newcommand{\Supp}{\mathrm{Supp}}
\newcommand{\Coker}{\mathrm{Coker}}
\newcommand{\Image}{\mathrm{Image}}
\newcommand{\Pic}{\mathrm{Pic}}
\newcommand{\divi}{\mathrm{div}}
\newcommand{\Zero}{\mathcal{Z}}

\newcommand{\Bigcone}{\mathrm{Big}}

\newcommand{\Rhat}{\widehat{R}}
\newcommand{\Rphat}{\widehat{R^+}}
\newcommand{\Deltahat}{\widehat{\Delta}}
\newcommand{\gp}{\mathrm{gp}}

\newsavebox{\circlebox}
\savebox{\circlebox}{\fontencoding{OMS}\selectfont\Large\char13}
\newlength{\circleboxwdht}

\def\Hom{\operatorname{Hom}}
\def\Spec{\operatorname{Spec}}

\def\Supp{\operatorname{Supp}}

\def\Pic{\operatorname{Pic}}

\title{NON-NEF LOCI IN MIXED CHARACTERISTIC}

\author{Ryotaro Iwane}
\address{Graduate School of Mathematical Sciences, University of Tokyo, 3-8-1 Komaba, Meguro-ku, Tokyo 153-8914, Japan}
\email{1733393470@g.ecc.u-tokyo.ac.jp}

\thanks{}

\keywords{}
\subjclass[2020]{}

\begin{document}

\begin{abstract}
We introduce relative versions of non-nef loci and restricted base loci, and investigate a relative version of Boucksom--Broustet--Pacienza's conjecture that these two loci coincide.
We prove the conjecture holds for normal integral schemes projective and flat over a Dedekind domain of characteristic zero with mild singularities, such as BCM-regularity.
This result is a mixed characteristic analogue of results of Cacciola--Di Biagio in characteristic zero and Sato in positive characteristic.
To prove this result, we show that BCM-regularity is preserved under perturbations by divisors with small vanishing order.
\end{abstract}

\maketitle
\markboth{R.~IWANE}{NON-NEF LOCI IN MIXED CHARACTERISTIC}

\tableofcontents

\section{Introduction}

Let $X$ be a normal projective variety over an arbitrary field $k$ and $D$ be an $\R$-Cartier $\R$-divisor on $X$. There are two important loci on $X$ that measure the non-nefness of $D$: the non-nef locus $\NNef(D)$ and the restricted base locus $\Bm(D)$. 

The non-nef locus $\NNef(D)$ is defined as the union of the centers on $X$ of all divisorial valuations $v$ satisfying $v_{\num}(D)>0$. When $D$ is a big Cartier divisor, the numerical vanishing order $v_{\num}(D)$ is defined as
\[v_{\num}(D):=\inf_{m\in\Z_{>0},G\in|mD|} v(G)/m. \]
It is known that $\NNef(D)=\emptyset$ if and only if $D$ is nef when the base field $k$ is uncountable.

The restricted base locus
$\Bm(D)$ is a perturbed version of the stable base locus, defined as
\[\Bm (D) := \bigcup_{A} \bigcap_{m\in\Z_{>0}} \Bs |m(D+A)|, \]
where $A$ runs through all ample $\R$-Cartier $\R$-divisors such that $D+A$ is $\Q$-Cartier and $m$ runs through all positive integers for which $m(D+A)$ is Cartier. As in the case of the non-nef locus, it is known that $\Bm(D)=\emptyset$ if and only if $D$ is nef.

These two loci are expected to coincide.

\begin{conj}[{\cite[Conjecture~2.7]{BBP}}]\label{BBPconj}
Let $X$ be a normal projective variety over a field $k$.
For every $\R$-Cartier $\R$-divisor $D$ on $X$,
\[\NNef(D) = \Bm (D). \]
\end{conj}

In characteristic zero, when $X$ is smooth, Conjecture~\ref{BBPconj} was proved by Nakayama \cite[Chapter~5, 1.9.~Lemma~(1)]{Nak} and an alternative proof via multiplier ideals was given in \cite[Proposition 2.8]{ELMNP}.
Cacciola and Di Biagio \cite[Corollary 4.9]{CdB} generalized the result to the case where $X$ has only klt type singularities.

In positive characteristic, when $X$ is smooth, Musta\c{t}\v{a} \cite[Theorem~6.2]{Mus} proved Conjecture~\ref{BBPconj} using test ideals, which are regarded characteristic $p>0$ counterparts of multiplier ideals in characteristic zero. Sato \cite[Theorem~1.2]{Sato} generalized the result to the case where $X$ has only strongly $F$-regular singularities and $k$ is $F$-finite.
Murayama \cite[Theorem~C]{Mur2} proved that Sato's result holds even if $k$ is not $F$-finite.

In this paper, we introduce relative versions of non-nef loci and restricted base loci (Section~\ref{sec3}), and we investigate the relative version of Conjecture~\ref{BBPconj}.

\begin{conj}\label{mainconj}
Let $f:X \rightarrow S$ be a projective surjective morphism of integral excellent schemes with $X$ normal.
For every $\R$-Cartier $\R$-divisor $D$ on $X$,
\[\NNef(D/S) = \Bm (D/S). \]
(See Section~\ref{sec3} for the precise definitions of both sides.)
\end{conj}

In the case where $\dim X\leq 2$, Conjecture~\ref{mainconj} follows directly from formal properties of restricted base loci and non-nef loci (Corollary~\ref{2dimcase}). Thus, it suffices to consider the case where $\dim X\geq 3$.

When $S$ is of equal characteristic, we obtain the following results by relativizing the proofs of \cite{CdB} and \cite{Sato}.

\begin{thm}[Theorem~\ref{maintheoremzero} and \ref{maintheoremp}]\label{maintheoremequal}
Let $S$ be an integral excellent $\Q$-scheme admitting a dualizing complex (resp.~an integral and $F$-finite $\F_p$-scheme), and let $f:X\rightarrow S$ be a projective surjective morphism from a normal integral scheme.
If $X$ has only klt type (resp.~strongly $F$-regular) singularities except for finitely many closed points, then Conjecture~\ref{mainconj} holds for $f$.
\end{thm}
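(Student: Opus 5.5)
The plan is to prove the two inclusions separately. The inclusion $\NNef(D/S)\subseteq\Bm(D/S)$ should be formal and hold without any singularity hypothesis. Suppose $v$ is a divisorial valuation with $v_{\num}(D/S)>0$. By the continuity and homogeneity of $v_{\num}$ on the relatively big cone, we can choose a small relatively ample $A$ with $D+A$ a $\Q$-divisor and $v_{\num}(D+A/S)>0$. Then every member of $|m(D+A)|$ over affine opens of $S$ has positive order along $v$, so the center $c_X(v)$ lies in $\bigcap_m \Bs|m(D+A)/S|\subseteq \Bm(D/S)$. I take these formal properties as established in Section~\ref{sec3}, together with the reduction to big $D$ by adding small ample divisors.

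For the reverse inclusion, let $x\in\Bm(D/S)$, and assume first that $x$ lies in the locus where $X$ is klt type (resp. strongly $F$-regular). Following \cite{CdB}, \cite{Sato}, I will argue by contradiction: suppose $x\notin\NNef(D/S)$, so $v_{\num}(D/S)=0$ for every $v$ centered at a point specializing to $x$. Work locally over an affine open $\Spec R\subseteq S$ meeting $f(x)$.

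\begin{enumerate}[label=(\arabic*)]
\item Choose a boundary $\Delta$ near $x$ with $(X,\Delta)$ klt (resp. strongly $F$-regular, via a Schwede-type choice of $\Delta$).
\item Use the asymptotic multiplier ideal $\J(X,\Delta;\|m(D+A)\|)$ (resp. the asymptotic test ideal $\tau(X,\Delta;\|m(D+A)\|)$), defined relative to $S$ using $f_*\sO_X(m(D+A))$.
\item Establish the relative uniform global generation statement: for a fixed sufficiently $f$-ample Cartier $H$ and all $m$, the sheaf $\sO_X(m(D+A)+H)\otimes \J(\|m(D+A)\|)$ is $f$-globally generated.
\item To obtain step (3), in characteristic zero I would apply relative Kawamata--Viehweg/Nadel vanishing with Castelnuovo--Mumford regularity relative to $S$; this is where excellence and the dualizing complex enter, through Murayama's relative vanishing. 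In characteristic $p$, I would use the trace of Frobenius surjectivity together with relative Serre vanishing, which requires $F$-finiteness of $S$.
\end{enumerate}

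Next I need to show that $x$ is not in the cosupport of $\J(\|m(D+A)\|)$ for all $m$. If $v_{\num}(D)=0$ for all valuations centered through $x$, then $v(\|m(D+A)\|)\le m\,v_{\num}(D+A)$ is bounded. Since klt type (resp. strongly $F$-regular) is an open condition, it lets us bound membership of $\J$ near $x$ uniformly, as in \cite[Lemma 4.6]{CdB}: the ideal is trivial at $x$ provided orders of vanishing are uniformly bounded over valuations with small log discrepancy. One then chooses $A'$ with $A'-A-H/m$ ample and derives $x\notin \Bs|m(D+A')/S|$, which contradicts $x\in\Bm(D/S)$.

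The exceptional finitely many closed points are handled by a dimension argument. Every irreducible component of $\Bm(D/S)$ is positive dimensional or else a closed point that is already a component of the non-nef locus, using the relative analogue of the fact that $\Bm$ has no isolated points when $\dim X\ge 2$ (as in Corollary~\ref{2dimcase}). Since $\NNef(D/S)$ is a union of closures of centers, and the non-singular-locus argument covers a dense set of each positive-dimensional component, taking closures yields these points as well. I expect the main obstacle to be the uniform boundedness in the previous paragraph: passing from $v_{\num}=0$ at each valuation to triviality of asymptotic multiplier/test ideals at $x$. This requires controlling infinitely many valuations through the pair's discrepancy (resp. the $F$-pure threshold). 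I would first try to reduce to a single log resolution in characteristic zero, and to use the valuative characterization of test ideals after a finite cover in characteristic $p$.
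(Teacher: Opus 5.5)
Your overall architecture matches the paper's. You have the formal inclusion $\NNef(D/S)\subset\Bm(D/S)$ (Proposition~\ref{easycontainment}), and the inclusion $\Bm(D)\subset\bigcup_m\Zero(\J(X,\Delta,\|mD\|))$ (resp.\ with asymptotic test ideals) via relative uniform global generation. You also treat the exceptional closed points through the absence of isolated closed points in $\Bm(D/S)$ (Corollary~\ref{clpt}(2), which rests on Lemma~\ref{fujzar}); your extra alternative ``or else a closed point that is already a component of the non-nef locus'' is neither justified nor needed. However, the step that carries the theorem is not proved. That step is
\[
\bigcup_{m}\Zero(\J(X,\Delta,\|mD\|))\setminus\Zero(\J(X,\Delta))\subset\NNef(D),
\]
together with its test-ideal analogue. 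You flag it as the main obstacle and only gesture at a uniform bound coming from openness of the klt (resp.\ strongly $F$-regular) locus, but openness is not the relevant input. Knowing $v(\|D\|)=0$ for each $v$ separately gives, for each $v$, some $G\in|mD|$ with $v(G)/m$ small. That $G$ depends on $v$, whereas triviality of $\J(X,\Delta+\frac1m G)$ at $x$ requires controlling all valuations through $x$ simultaneously.

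The missing idea is that a single valuation suffices, once combined with a \emph{uniform stability} statement. Take one divisorial valuation $w$ centered at $x$. Since $x\notin\NNef(D)$, we have $w(\|D\|)=0$, so for every $\delta>0$ there are $m$ and $G\in|mD|$ with $\frac1m\ord_{\m_x}(G)\le \frac{w(G)}{m\,w(\m_x)}<\delta$. What is then needed is the following: there is $\delta>0$, depending only on $(X,\Delta,x)$, such that $(X,\Delta+tG)$ stays klt (resp.\ strongly $F$-regular) at $x$ whenever $t\cdot\ord_{\m_x}(G)<\delta$. This gives $\J(X,\Delta+\frac1mG)_x=\sO_{X,x}$, hence $\J(X,\Delta,\|D\|)_x=\sO_{X,x}$; apply this to every multiple of $D$. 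In characteristic zero the paper obtains this from Proposition~\ref{regklt} (on a regular scheme, $\ord_{\m_y}(\Delta)<1$ implies klt), applied on a log resolution as in \cite[Proposition~4.2--Corollary~4.4]{CdB}. In characteristic $p$ it is \cite[Proposition~3.18]{Sato} (cf.\ \cite[Corollary~3.14]{Sato} and Proposition~\ref{stability}). Your suggested route through a valuative description of test ideals after finite covers does not by itself produce a $\delta$ uniform in $G$. A smaller point: uniform global generation requires a boundary $\Delta$ on all of $X$ with $K_X+\Delta$ $\Q$-Cartier, not only near $x$. The local boundary must therefore be globalized; the paper does this in the mixed characteristic case via \cite[Lemma~5.5]{HLS}.
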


Recently, the theory of test ideals in mixed characteristic has been developed, and there are several approaches to defining test ideals (e.g., \cite{MS}, \cite{MST}, \cite{BMP}, \cite{HLS}, and \cite{BMP2}).
Ma and Schwede \cite{MS} defined a version of test ideals detecting BCM-regular singularities, which can be viewed as an analogue of klt singularities in equal characteristic zero and strongly $F$-regular singularities in characteristic $p>0$.
On the other hand,
Hacon, Lamarche, and Schwede \cite{HLS} defined a version of test ideals satisfying uniform global generation, which is known for multiplier ideals in characteristic zero. As an application, they essentially proved Conjecture~\ref{mainconj} in \cite[Theorem~7.7]{HLS} when $X$ is a regular scheme projective over a complete Noetherian local domain of mixed characteristic.
Bhatt et al. \cite{BMP2} defined a version of test ideals satisfying uniform global generation and detecting klt and BCM-regular singularities after localiation and completion at a point.

Our main theorem generalizes the result of \cite[Theorem~7.7]{HLS} to the case where $X$ has mild singularities and $S$ is not necessarily complete local.
We say that $(x\in X)$ is of BCM-regular type if there exists an effective $\Q$-divisor $\Delta_x$ on $\Spec \sO_{X,x}$ such that $(\widehat{\sO_{X,x}},\widehat{\Delta_x})$ is BCM-regular.
For example, local log-regular rings, which can be viewed as a generalization of toric singularities to the mixed characteristic case, are of BCM-regular type (see Section~\ref{sec:BCMtestideal}).

\begin{mainthm}[Theorem~\ref{maintheorem}]\label{mainthm}
Let $S$ be a Dedekind scheme of characteristic zero, and
let $f:X \rightarrow S$ be a projective surjective morphism from a normal integral scheme. Suppose that every non-closed point $x\in X$ satisfies one of the following conditions:
\begin{enumerate}
\item $(x\in X)$ is of equal characteristic zero and of klt type.
\item $(x\in X)$ is of mixed characteristic and of BCM-regular type.
\end{enumerate}
Then Conjecture~\ref{mainconj} holds for $f$.
\end{mainthm}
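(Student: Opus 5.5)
The inclusion $\NNef(D/S)\subseteq \Bm(D/S)$ should follow formally from the definitions, as in the absolute case. Since both loci are unions of closures of points, and closed points need separate handling (the hypothesis only covers non-closed points), the real content is the reverse inclusion away from finitely many closed points. I would first reduce to $D$ big over $S$: replace $D$ by $D+\epsilon A$ for a relatively ample $A$, and use the formal behaviour of both loci under such perturbations, namely $\Bm(D/S)=\bigcup_\epsilon \Bm(D+\epsilon A/S)$ and the corresponding statement for $\NNef$. It then suffices to show the following: if $x\in X$ is a non-closed point with $x\in\Bm(D/S)$, then there is a divisorial valuation $v$ with center containing $x$ and $v_{\num}(D)>0$. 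I would argue by contrapositive. Assume $v_{\num}(D)=0$ for every divisorial valuation centered at $x$, or at least for those arising from a log resolution or some blow-up of $\sO_{X,x}$. We want to show $x\notin \Bs|m(D+A)|$ for suitable $m$ and every ample $A$.

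The key tool will be a uniform global generation statement for a test ideal $\tau$ of the type in \cite{BMP2} or \cite{HLS}. Given such a $\tau$, the ideal $\tau(X,\Delta+\|mD\|)\otimes\sO_X(mD+H)$ is relatively globally generated over $S$ for a fixed sufficiently relatively ample $H$ independent of $m$. Here $H$ depends on $\dim X$ and an ample generator, and "relatively" is interpreted over affine opens of the Dedekind scheme $S$. Because generic fibers have characteristic zero, at equal-characteristic points the test ideal agrees with the multiplier ideal, so the klt case at those points reduces to \cite{CdB}. At mixed-characteristic points I would use BCM-regular type. The aim is to show that the stalk at $x$ of $\tau(X,\Delta+\|mD\|)$ is the unit ideal for all $m$. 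Then $x\notin\Bs|mD+H|$, and rescaling with $A$ (taking $m$ large so that $H/m\le A$) gives $x\notin\Bm(D/S)$.

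The main obstacle is exactly the statement flagged in the abstract: BCM-regularity is preserved under perturbation by divisors of small vanishing order. Concretely, I must show the following. If $(\widehat{\sO_{X,x}},\widehat{\Delta_x})$ is BCM-regular and $G\in|mD|$ has $v(G)/m$ arbitrarily small for all relevant valuations $v$, then $(\widehat{\sO_{X,x}},\widehat{\Delta_x}+\tfrac{c}{m}G)$ remains BCM-regular for suitable $c$. The plan is as follows.
\begin{enumerate}
\item Use that BCM-regularity is an open condition, perturbing $\Delta_x$ slightly upward by $\epsilon\,\mathrm{div}(h)$ while staying BCM-regular.
\item Control $G$ via a valuative estimate. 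Choose $G$ with $\ord_E(G)/m<\delta$ for the finitely many exceptional/test divisors $E$ that compute BCM-regularity through a "$\tau\supseteq$ multiplier-type ideal on a sufficiently large alteration" comparison. This amounts to showing that $\tfrac1m G\le \epsilon'\cdot(\text{something fixed})$ up to valuations, so that $\tau(\Delta+\tfrac1mG)\supseteq\tau(\Delta+\epsilon'\mathrm{div}(h))=\sO$.
\end{enumerate}
The hardest point is making step (2) rigorous. In mixed characteristic there is no resolution of singularities, and BCM test ideals are not valuatively defined. I would attempt it via a Skoda/subadditivity-type containment combined with the fact that $v_{\num}(D)=0$ at all divisors over $x$, which forces the asymptotic order of the graded system of $D$ along $\m_x$ to be zero, i.e. $\ord_{\m_x}(\|mD\|)=o(m)$. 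I would then use the Izumi-type comparison that an ideal of small $\m_x$-order is contained in a small multiple threshold sense. Finally, I would handle the finitely many closed points by noting that they cannot be isolated components of $\Bm(D/S)$ beyond those of $\NNef$, as in Corollary~\ref{2dimcase}'s formal arguments.
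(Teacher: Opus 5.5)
\textbf{The gap: Proposition~\ref{stability} is not proved, and your plan for it does not work.}

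Your outer structure matches the paper's. You reduce to $S=\Spec V$ for a DVR $V$ and $D$ big $\Q$-Cartier (Propositions~\ref{bcbaselocus}, \ref{bcnnef}, \ref{nneffact}, Remark~\ref{baselocusfacts}(3)), and dispose of closed points via Corollary~\ref{clpt}(2). Equal-characteristic points are handled by multiplier ideals. At a mixed-characteristic point $x$, uniform global generation (Proposition~\ref{uggmixed}) gives $\Bm(D)\subset\bigcup_m\Zero(\tau(X,\Delta,\|mD\|))$. So everything reduces to one step: if $x\notin\NNef(D)$, choose $G\in|mD|$ with $\frac1m\ord_{\m_x}(G)<\delta$ and show that $(\widehat{\sO_{X,x}},\widehat{\Delta_x}+\frac1m\widehat{G_x})$ is still BCM-regular. (You also need a global $\Delta$ with $K_X+\Delta$ $\Q$-Cartier inducing the local boundary; the paper takes it from \cite[Lemma~5.5]{HLS}.)

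That last step is Proposition~\ref{stability}, and it carries the entire mixed-characteristic content. The specific problems with your sketch are these.
\begin{itemize}
\item Stability under $\Delta\mapsto\Delta+\epsilon\,\divi(h)$ holds for one fixed $h$, with $\epsilon$ depending on $h$. It gives nothing uniform in $r$.
\item Your step (2) presupposes that BCM-regularity of $(R,\Delta+\frac1m\divi(r))$ is detected by the values of $r$ on finitely many fixed divisors, or on one fixed alteration. Only then would a valuative inequality ``$\frac1mG\le\epsilon'\divi(h)$'' give $\taubcm(R,\Delta+\frac1m\divi(r))\supseteq\taubcm(R,\Delta+\epsilon'\divi(h))$. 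No such criterion is available. The only general comparison for BCM test ideals is monotonicity under honest inequalities of divisors. The components of $G$ are unrelated to $\divi(h)$ and change with $m$, and any alteration computing $\tau$ would depend on the divisor.
\item Even for multiplier ideals in characteristic zero, the uniform statement does not come from finitely many divisors. It comes from a log resolution of $(X,\Delta)$ together with the regular-case criterion, Proposition~\ref{regklt}.
\item Skoda and subadditivity bound test ideals from above, whereas you need a lower bound (the unit ideal). Izumi only lets you trade $\ord_{\m}$ for a single divisorial valuation.
\end{itemize}

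\textbf{The missing idea: a uniformity statement intrinsic to perfectoid BCM algebras.}
\begin{itemize}
\item Fix a divisorial valuation $w$ centered on $\m$; Izumi reduces $\ord_{\m}$ to $w$. Extend $w$ to a valuation $v$ of $\overline{K(R)}$.
\item Let $\eta$ be a socle generator of $H^d_{\m}(\omega_R)$. Let $I^{\Delta}_{\BCM}\subset R^+$ be the set of $c$ with $c\cdot\phi(\eta)=0$ in $H^d_{\m}(B)$ for some perfectoid BCM $R^+$-algebra $B$.
\item If $\delta:=v(I^{\Delta}_{\BCM})>0$, then $t\,w(r)<\delta$ forces $r^t\notin I^{\Delta}_{\BCM}$. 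That is, $(R,\Delta+t\,\divi(r))$ is $\BCM_B$-regular for every $B$.
\item Positivity is proved by contradiction. Suppose $c_j\in I^{\Delta}_{\BCM}$ with $v(c_j)\to 0$, witnessed by algebras $B_j$. Take one perfectoid BCM algebra $B$ dominating all the $B_j$ (\cite[Theorem~4.9]{MS}), so that $c_j\phi(\eta)=0$ in $H^d_{\m}(B)$ for every $j$.
\item Gabber's lemma (Lemma~\ref{replaceBCM}, from \cite{CLM}) then produces $B'$ in which $\phi(\eta)$ itself dies. This contradicts BCM-regularity of $(R,\Delta)$.
\end{itemize}

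Without this, or another genuinely uniform argument, your proof stops exactly at the step you flagged as hardest.
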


As a corollary, we obtain the following:

\begin{cor}[Corollary~\ref{3dimkltcase}]\label{mainthmklt}
Let $f:X \rightarrow S=\Spec \Z[1/30]$ be a projective surjective morphism from a normal integral scheme. Suppose that $\dim X=3$ and $X$ has only klt type singularities except for finitely many closed points.
Then Conjecture~\ref{mainconj} holds for $f$.
\end{cor}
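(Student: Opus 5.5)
We will deduce Corollary~\ref{mainthmklt} from Main Theorem~\ref{mainthm} by checking its hypotheses for $f:X\to S=\Spec\Z[1/30]$. First, $S$ is a Dedekind scheme whose generic point has characteristic zero, so the base satisfies the hypotheses. By assumption, $X$ has klt type singularities away from finitely many closed points, so condition (1) or (2) only needs to be checked at non-closed points $x\in X$.

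We split the non-closed points into two cases according to their residue characteristic.

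\textbf{Points lying over the generic point of $S$.} Here $\sO_{X,x}$ is a $\Q$-algebra, so $(x\in X)$ is of equal characteristic zero. It is klt type by hypothesis, and condition (1) holds.

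\textbf{Points lying over a closed point $(p)\in S$.} Since $30$ is invertible on $S$, we have $p>5$. As $x$ is not closed, $\dim \sO_{X,x}\le 2$: $X$ has dimension $3$ and the closed fibre has dimension $2$, so such a point is the generic point of a component of the fibre or of a curve in it. Such a point is of mixed characteristic. The local ring has dimension at most $2$, or at most $3$ after accounting for dimension theory; in any case it is a klt type singularity of dimension $\le 3$ with residue characteristic $p>5$.

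We then invoke the known result that klt singularities of dimension at most three with residue characteristic $p>5$ are BCM-regular. This is due to Bhatt--Ma--Patakfalvi--Schwede--Tucker--Waldron--Witaszek (and, in dimension two, to earlier results on two-dimensional klt singularities being BCM-regular). Concretely, choose a boundary $\Delta_x$ with $(\sO_{X,x},\Delta_x)$ klt. Klt is preserved under completion for excellent local rings, so $(\widehat{\sO_{X,x}},\widehat{\Delta_x})$ is klt, and by the cited result it is BCM-regular. Hence $(x\in X)$ is of BCM-regular type and condition (2) holds.

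Main Theorem~\ref{mainthm} then applies and gives $\NNef(D/S)=\Bm(D/S)$.

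The main obstacle is the second case: making sure the cited BCM-regularity theorem applies to the complete local ring at these (lower-dimensional, non-closed) points. This requires that the residue field need not be perfect or $F$-finite in the required sense, and that the klt condition and the boundary pass to the completion. If the cited theorem is stated only for dimension $3$ with perfect residue fields, I would first reduce using the fact that $\dim\sO_{X,x}\le 2$. For surfaces of residue characteristic $p>5$, BCM-regularity of klt singularities is known in greater generality (in fact for all $p$ in dimension two, via the classification and finite-cover arguments). Failing that, I would pass to a strict henselization or a gonflement with a perfect residue field and use that BCM-regularity descends under faithfully flat extensions.
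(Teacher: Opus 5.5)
Your proposal follows the paper's proof: reduce to Theorem~\ref{maintheorem}; note that points over the generic point of $S$ are of equal characteristic zero and of klt type; and note that a non-closed point $x$ in the fibre $X_s$ over $s=(p)$ has $p>5$ and $\dim\sO_{X,x}=1+\dim\sO_{X_s,x}\le 2$ by flatness, so it is BCM-regular by the two-dimensional result \cite[Theorem~7.11]{MST}. Drop the hedge ``or at most $3$'' and the appeal to a general ``klt of dimension $\le 3$ implies BCM-regular'' theorem, which is not established in that generality (compare Remark~\ref{lowdim}) and is not needed; the remark about all $p$ in dimension two and the gonflement detour are likewise unnecessary, since the relevant residue fields are finitely generated over $\F_p$ and hence $F$-finite.
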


We sketch the proof of the Main Theorem.
Since the formation of non-nef loci and restricted base loci commutes with localization of the base (Proposition~\ref{bcnnef} and \ref{bcbaselocus}),
the proof reduces to the case where $S$ is the spectrum of a DVR of equal characteristic zero or mixed characteristic.

The equal characteristic zero case follows by Theorem~\ref{maintheoremequal}.
In the mixed characteristic case, following strategies similar to those in \cite{CdB} and \cite{Sato}, we compare three loci on $X$: the non-nef locus, the restricted base locus, and the zero locus of asymptotic test ideals.
Here, we use the version of test ideals introduced by \cite{BMP2}. We prove the following:

\begin{thm}[Theorem~\ref{maintheoremtestideal}]
Let $X$ be a normal integral scheme projective and flat over a DVR $V$ of mixed characteristic.
For a log $\Q$-Gorenstein pair $(X,\Delta)$ and a big $\Q$-Cartier $\Q$-divisor on $X$, we have
\[
\NNef(D)\setminus\Zero(\tau(X,\Delta))
=\Bm(D)\setminus\Zero(\tau(X,\Delta))
=\bigcup_{m\in\Z_{>0}} \Zero(\tau(X,\Delta,\|mD\|))\setminus\Zero(\tau(X,\Delta)),
\]
where $\tau(X,\Delta)$ denotes the test ideal
and $\tau(X,\Delta,\|mD\|)$ denotes the asymptotic test ideal
of \cite{BMP2}, and $\Zero(\mathcal{I})$ denotes the zero locus of an ideal sheaf $\mathcal{I}\subset\sO_{X}$.
\end{thm}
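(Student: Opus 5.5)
We will establish the equalities by proving a cycle of inclusions among the three loci, each taken off $\Zero(\tau(X,\Delta))$:
\[
\bigcup_m \Zero(\tau(X,\Delta,\|mD\|)) \subseteq \NNef(D) \subseteq \Bm(D) \subseteq \bigcup_m \Zero(\tau(X,\Delta,\|mD\|)).
\]
This follows the strategy of \cite{CdB} and \cite{Sato}. The middle inclusion $\NNef(D)\subseteq\Bm(D)$ is formal and holds without removing anything. If $v$ is a divisorial valuation with $v_{\num}(D)>0$, then continuity of $v_{\num}$ on the big cone gives $v_{\num}(D+A)>0$ for all sufficiently small ample $A$. Hence $v(G)>0$ for every $G\in|m(D+A)|$, so the center of $v$ lies in $\Bs|m(D+A)|$ for every $m$, and therefore in $\Bm(D)$. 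Since the definitions are the relative ones of Section~\ref{sec3}, we will invoke the corresponding formal properties proved there.

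For the first inclusion, take $x\notin\NNef(D)\cup\Zero(\tau(X,\Delta))$. We must show $x\notin\Zero(\tau(X,\Delta,\|mD\|))$ for every $m$. Work locally at $x$. Here $\sO_{X,x}$ has mixed characteristic, or equal characteristic zero, in which case the classical multiplier-ideal argument applies; the mixed characteristic case is the essential one. Since $x\notin\NNef(D)$, every divisorial valuation centered at $x$ (or through $x$) has $v_{\num}(D)=0$. So the asymptotic order of vanishing of $\|mD\|$ along every divisor over $x$ is zero, although it may only be approached and not attained. The goal is a statement of the following type: if $\tau(X,\Delta)_x=\sO_{X,x}$, then $\tau(X,\Delta,\mathfrak{a})_x=\sO_{X,x}$ for every ideal $\mathfrak{a}$, or $\Q$-divisor $\epsilon G$, of sufficiently small vanishing order. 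This is the perturbation result announced in the abstract: BCM-regularity is preserved under perturbation by divisors with small vanishing order. We apply it to $G\in|km D|$ chosen with $\ord_v(G)/k$ small for the finitely many relevant valuations, for example those appearing in a log resolution or a bounded family. I expect this to be the main obstacle. Lacking resolutions and Skoda-type arguments in the BMP2 framework, I would first try to bound the test ideal below by comparing with $\tau$ of $(X,\Delta)$ pulled back to a normalized blowup, using the transformation rule for BCM test ideals under finite and birational maps.

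For the last inclusion, take $x\notin\bigcup_m\Zero(\tau(X,\Delta,\|mD\|))\cup\Zero(\tau(X,\Delta))$, and fix an ample $A$. Uniform global generation in \cite{BMP2} says that $\tau(X,\Delta,\|mD\|)\otimes\sO_X(mD+H)$ is globally generated, with $H$ a fixed ample divisor depending only on $X$ and $\Delta$, for example $K_X+\Delta+(\dim X+1)$ times a very ample divisor, adjusted for the relative setting over $V$. Then $x\notin\Bs|mD+H|$ for all $m$. Writing $m(D+\frac1m H)$ and choosing $m$ with $H/m\le A$, the difference $A-H/m$ being ample, we get $x\notin \Bs|\ell(D+A)|$ for suitable multiples $\ell$. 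Hence $x\notin\Bm(D)$.

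Combining the three inclusions gives all equalities off $\Zero(\tau(X,\Delta))$.
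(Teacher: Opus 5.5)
Your architecture matches the paper's. $\NNef(D)\subset\Bm(D)$ is the formal containment (Proposition~\ref{easycontainment}), $\Bm(D)\subset\bigcup_m\Zero(\tau(X,\Delta,\|mD\|))$ comes from uniform global generation (Proposition~\ref{uggmixedcontainment}), and points of the generic fiber reduce to multiplier ideals via Proposition~\ref{uniftest}(2). These parts are fine.

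The gap is the one non-formal inclusion, $\Zero(\tau(X,\Delta,\|D\|))\setminus\Zero(\tau(X,\Delta))\subset\NNef(D)$, at points $x$ of the closed fiber. You name a perturbation principle but do not state it in the form needed, and you call it the main obstacle. The route you sketch (controlling finitely many valuations from a log resolution or bounded family, or comparing on a normalized blowup) cannot work as it stands. Log resolutions are not available in general in mixed characteristic. More importantly, BCM-regularity is not known to be detected by discrepancies along divisors on a model. So making $\ord_v(G)/m$ small for finitely many $v$ does not certify that the perturbed pair is BCM-regular.

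Two ingredients are missing. The first is the bridge Proposition~\ref{uniftest}(3): for $x\in X_{\m}$ one has $\tau(X,\Delta)\cdot\widehat{\sO_{X,x}}=\taubcm(\widehat{\sO_{X,x}},\widehat{\Delta_x})$.
\begin{itemize}
\item So $x\notin\Zero(\tau(X,\Delta))$ says the completed pair is BCM-regular.
\item And $\tau(X,\Delta+\frac1m G)_x\subset\tau(X,\Delta,\|D\|)_x\neq\sO_{X,x}$ says $(\widehat{\sO_{X,x}},\widehat{\Delta_x}+\frac1m\widehat{G_x})$ is not BCM-regular, for every $m$ and every $G\in|mD|$.
\end{itemize}

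The second is a uniform statement on the complete local ring (Proposition~\ref{stability}): there is $\delta>0$, depending only on $(R,\Delta)$, such that $(R,\Delta+t\,\divi(r))$ is BCM-regular whenever $t\cdot\ord_{\m}(r)<\delta$. The paper proves this inside $R^+$, not birationally.
\begin{itemize}
\item Fix one divisorial valuation $w$ centered at $\m$, comparable to $\ord_{\m}$ by Izumi, and extend it to $v$ on $\overline{K(R)}$.
\item Consider the set of $c\in R^+$ killing the image of the socle generator $\eta$ in $H^d_{\m}(B)$ for some perfectoid BCM $R^+$-algebra $B$. Show its $v$-values are bounded below by a positive constant.
\item If not, take $c_j$ with $v(c_j)\to 0$ and a single $B$ dominating all witnesses \cite[Theorem~4.9]{MS}. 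Gabber's Lemma~\ref{replaceBCM} then gives $B'$ in which the image of $\eta$ itself dies, contradicting BCM-regularity.
\item Failure of BCM-regularity for $\Delta+t\,\divi(r)$ puts $r^t$ in that set, so this bound is the required $\delta$.
\end{itemize}

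With this in hand, only one valuation is needed from $x\notin\NNef(D)$. A divisorial $w$ centered at $x$ has $w(\|D\|)=0$. Hence some $G\in|mD|$ has $\frac1m\ord_{\m_x}(G)\le w(G)/(m\,w(\m_x))<\delta$, which gives the contradiction.
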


The containment $\NNef(D)\subset
\Bm(D)$ (Proposition~\ref{easycontainment}) follows from formal properties.
By the uniform global generation of asymptotic test ideals (Proposition~\ref{uggmixed}),
we also have
$\Bm(D)\subset\bigcup_{m\in\Z_{>0}} \Zero(\tau(X,\Delta,\|mD\|))$
(Proposition~\ref{uggmixedcontainment}).
The comparison between the zero locus of asymptotic test ideals and the non-nef locus reduces to the uniform stability of BCM-regularity under perturbations by divisors, which is a mixed characteristic analogue of \cite[Corollary~3.14]{Sato}.
This was essentially proved by \cite[the paragraph following Definition~5.3.1]{CLM} for $\Q$-Gorenstein rings.

\begin{prop}[{Proposition~\ref{stability}, cf.~\cite[Definition~5.3.1]{CLM}}]\label{introstability}
Let $(R,\m)$ be a complete normal Noetherian local domain of residue characteristic $p>0$, and let $\Delta$ be an effective $\Q$-divisor on $\Spec R$ such that $(R,\Delta)$ is BCM-regular.
Then there exists $\delta\in\R_{>0}$ such that $(R,\Delta +t\cdot\divi(r))$ is BCM-regular for every $0\neq r\in \m$ and every $t\in\Q_{\geq 0}$ satisfying $t\cdot \ord_{\m}(r)<\delta$.
\end{prop}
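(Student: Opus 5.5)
The plan is to work with the characterization of BCM-regularity through a single perfectoid big Cohen--Macaulay $R^+$-algebra $B$, as in \cite{MS} and \cite{BMP2}. Recall that $(R,\Delta)$ is BCM-regular iff, after choosing an effective divisor $K_R$ and an integer $N$ with $N(K_R+\Delta)=\divi(f)$, the map
\[
H^d_\m(R(K_R)) \to H^d_\m(B),\qquad \eta \mapsto f^{1/N}\eta,
\]
is injective. Equivalently, it is nonzero on the socle generator $\eta_0$. Adding $t\cdot\divi(r)$ replaces $f^{1/N}$ by $f^{1/N}r^{t}$, where $r^t$ makes sense in $B$ because $B$ is absolutely integrally closed. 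So the statement reduces to this: find $\delta>0$ such that $f^{1/N}r^{t}\eta_0\neq 0$ in $H^d_\m(B)$ whenever $t\cdot\ord_\m(r)<\delta$.

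The first step is a perturbation by a single fixed element. Fix a system of generators $x_1,\dots,x_n$ of $\m$. Using the standard openness/stability of BCM-regularity under small perturbation by one fixed element (the analogue of the statement for strongly $F$-regular pairs), choose $\epsilon>0$ so that $(R,\Delta+\epsilon\,\divi(x_i))$ is BCM-regular for every $i$. In fact I would aim for more: $(R,\Delta+\epsilon\,\divi(g))$ should be BCM-regular for $g=x_1\cdots x_n$, or for a suitable general element of $\m$. This comes from the fact that BCM test ideals $\taubcm(R,\Delta,\ideala^s)$ are constant for $s$ in $[0,s_0)$, which holds once the ideal-twisted test ideal of \cite{BMP2} is known to satisfy the usual discreteness and right-continuity in $s$.

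The second step is a uniform comparison. If $\ord_\m(r)=k$, then $r\in\m^k$, so $r^{1/k}$ lies in the integral closure of $\m B$. Since $B$ is perfectoid and absolutely integrally closed, integral closures of ideals are controlled up to almost-ness in $p^{1/p^\infty}$, and I would expect a containment of the form $r^{t}\in(\m B)^{tk}$ up to $p^{1/p^\infty}$. This should give
\[
\taubcm(R,\Delta,r^t)\supseteq \taubcm(R,\Delta,\m^{tk}),
\]
which is the mixed-characteristic version of the usual monotonicity $\tau(\ideala^t)\supseteq\tau(\mathfrak b^t)$ for $\ideala\subseteq\overline{\mathfrak b}$. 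This reduces the problem to showing $\taubcm(R,\Delta,\m^{s})=R$ for all $s<\delta$, for one fixed $\delta$ depending only on $(R,\Delta)$.

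The last step, which I expect to be the main obstacle, is the claim $\taubcm(R,\Delta,\m^{s})=R$ for $0\le s<\delta$. Here one must pass from an ideal exponent back to divisors, since BCM-regularity of pairs is formulated with divisors. I would first try a general-element argument via \cite{BMP2}, or a Skoda-type reduction. One would show that $\taubcm(R,\Delta,\m^{s})=\taubcm(R,\Delta,\frac{s}{n}\sum_i\divi(y_i))$ for $n$ sufficiently general elements $y_i\in\m$, or bound it below by the product of the test ideals of the individual $y_i^{s/n}$. Then the first step, applied to these finitely many fixed elements, gives $\delta$.

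If genericity is unavailable because the residue field is finite, I would instead pass to $R[[T]]$ or to a faithfully flat extension with infinite residue field, using that BCM-regularity descends along such extensions. I would also compare with the $\Q$-Gorenstein argument of \cite[Definition~5.3.1]{CLM}: there one perturbs $K_R+\Delta$ to a $\Q$-Cartier divisor via an auxiliary effective divisor and then applies the Cartier-index version of the same argument.
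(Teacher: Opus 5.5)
\textbf{There is a genuine gap in your second step.} The inclusion $\taubcm(R,\Delta,r^t)\supseteq\taubcm(R,\Delta,\m^{tk})$ runs in the wrong direction.

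\begin{itemize}
\item The only information you extract from $\ord_\m(r)=k$ is $r\in\m^k$, i.e.\ $(r)\subseteq\m^k$. Monotonicity then gives $\taubcm(R,\Delta,r^t)\subseteq\taubcm(R,\Delta,\m^{tk})$, not the reverse.
\item Knowing that $r^t$ lies (almost) in a power of $\m B$ says that multiplication by $r^t$ kills at least as much of $H^d_\m(B)$ as that power does. This bounds the test ideal of $r^t$ from above, not from below.
\item A sanity check confirms the chain cannot work. Since it uses only $r\in\m^k$, the same reasoning with $k=1$ would give $\taubcm(R,\Delta,r^t)\supseteq\taubcm(R,\Delta,\m^{t})=R$ for every $r\in\m$ and every $t<\delta$. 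But take $R$ regular, $\Delta=0$, $y$ a regular parameter, $r=y^N$ and $t=1/N$. Then the pair is $(R,\divi(y))$, which is not BCM-regular, since BCM-regular pairs are klt \cite[Corollary~6.22]{MS}.
\end{itemize}

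The step you flag as the main obstacle is actually the easy one. For any single $g\in\m$ we have $\taubcm(R,\Delta,\m^s)\supseteq\taubcm(R,\Delta,g^s)$, so it already follows from your first step. The real content of the proposition is uniformity over \emph{all} $r$. For that you need an \emph{upper} bound on how divisible $r^t$ is, i.e.\ the information $r\notin\m^{k+1}$, which your plan never uses.

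\textbf{How the paper gets uniformity.} The argument is valuative.
\begin{itemize}
\item Fix a divisorial valuation $w$ centered at $\m$ and extend it to a real valuation $v$ of $\overline{K(R)}$. Izumi's theorem gives the nontrivial comparison $w(r)\le C^{-1}\ord_\m(r)$.
\item Let $I^{\Delta}_{\BCM}\subset R^+$ be the set of $c$ that kill the image of the socle generator $\eta$ in $H^d_\m(B)$ for \emph{some} perfectoid big Cohen--Macaulay $R^+$-algebra $B$.
\item Suppose $v(I^{\Delta}_{\BCM})=0$. Choose $c_j\in I^{\Delta}_{\BCM}$ with $v(c_j)\to 0$, and dominate the countably many witnessing algebras $B_j$ by a single $B$ \cite[Theorem~4.9]{MS}. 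Gabber's lemma (Lemma~\ref{replaceBCM}) then yields $B'$ in which the image of $\eta$ vanishes, contradicting BCM-regularity of $(R,\Delta)$.
\item Hence $v(I^{\Delta}_{\BCM})>0$. Then $t\,w(r)<v(I^{\Delta}_{\BCM})$ forces $r^t\notin I^{\Delta}_{\BCM}$, i.e.\ $(R,\Delta+t\,\divi(r))$ is BCM-regular. Izumi converts this bound into one in terms of $\ord_\m$.
\end{itemize}
This argument handles the intersection over all $B$ at once. It therefore needs neither a single-algebra characterization nor ideal-exponent test ideals or general-element reductions.
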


This proposition follows from \cite[Lemma~2.5.1 and Corollary~2.5.3]{CLM} (Lemma~\ref{replaceBCM}), which are essentially due to Gabber.

In Section~\ref{sec:BCMalpha}, as a topic of independent interest, we investigate (a variant of) the supremum of the possible values of $\delta$ in Proposition~\ref{introstability}, which we call the local BCM-alpha invariant.
In characteristic $p>0$,
this invariant coincides with the local $F$-alpha invariant, which was recently introduced by \cite{Pande2} and \cite{TY} as a positive characteristic analogue of Tian's $\alpha$-invariant \cite{Tian} in K-stability theory.

\begin{small}
\begin{acknowledgement}
I would like to express his deepest gratitude to his supervisor Professor Shunsuke Takagi for his encouragement, valuable advice, and suggestions.
I would like to thank Hirotaka Onuki and Yuki Morita for their comments in the graduate seminar.
I would also like to thank Tatsuki Yamaguchi, Ryo Ishizuka, and Shou Yoshikawa for helpful conversations.
I am grateful to Kenta Sato for explaining the proof of Proposition~\ref{nnefempty} in the absolute setting.
This research was supported by the WINGS-FMSP program at the University of Tokyo.
\end{acknowledgement}
\end{small}

\section{Preliminaries}

\subsection{Conventions}\label{sec:conv}

\begin{enumerate}
\item 
We say that a scheme $X$ is excellent if $X$ is Noetherian and has a open cover by spectra of excellent rings.
\item For a scheme $X$ and an ideal sheaf $\mathcal{I}\subset\sO_{X}$,
set $\Zero(\mathcal{I}):=\Supp(\sO_{X}/\mathcal{I})$.
\item We say that a morphism of schemes $X\rightarrow S$ is projective if there exist $N\in\Z_{\geq 0}$ and a closed immersion $X\hookrightarrow \PP_{S}^{N}$ over $S$.
\item Let $X$ be a normal integral scheme admitting a dualizing complex $\omega_{X}^{\bullet}$, and let $\Delta$ be an effective $\Q$-divisor on $X$. Take an associated canonical divisor $K_X$ with the smallest nonzero cohomology of $\omega_{X}^{\bullet}$.
We say that $(X,\Delta)$ is a log $\Q$-Gorenstein pair if $K_X +\Delta$ is $\Q$-Cartier. When $X=\Spec R$ for a ring $R$, we say that $(R,\Delta)$ is a log $\Q$-Gorenstein pair.
We say that $X$ (resp.~$R$) is $\Q$-Gorenstein if $(X,0)$ (resp.~$(R,0)$) is log $\Q$-Gorenstein. These definitions are independent of the choice of $\omega_{X}^{\bullet}$ and $K_X$.
\item
Let $X$ be a normal integral scheme of finite type over a regular, integral, and excellent base scheme, and let $(X,\Delta)$ be a log $\Q$-Gorenstein pair.
For the definition of klt pairs $(X,\Delta)$, 
see \cite[Section~2.1]{Kol} or \cite[Section~2.5]{BMP}.
This definition is consistent with Definition~\ref{kltdef}.
\item
For a flat morphism $f:Y\rightarrow X$ of normal integral Noetherian schemes and an $\R$-divisor $D$ on $X$, we often write the reflexive pullback of $D$ to $Y$ by $D|_{Y}$.
When $Y=\Spec R$ (resp.~$Y=\Spec \sO_{X,x}$), we write $D|_{\Spec R}$ (resp.~$D|_{\Spec\sO_{X,x}}$) by $D|_{R}$ (resp.~$D_x$).
When $(R,\m)$ is a Noetherian local ring and $Y=\Spec \Rhat  \rightarrow \Spec R =X$ is the $\m$-adic completion, we write $D|_{\Spec\Rhat}$ by $\widehat{D}$.
\item For a Noetherian local ring $(R,\m)$ and $r\in R$, the vanishing order of $r$ along $\m$ is defined as
$\ord_{\m}(r):= \max \{ n\in\Z_{\geq 0} \mid r\in\m^{n} \}$, and the normalized order of $r$ along $\m$ is defined as
$\nord_{\m}(r):= \max \{ n\in\Z_{\geq 0} \mid r\in\overline{\m^{n}} \}$, where $\overline{\m^{n}}$ denotes the integral closure of $\m^{n}$ for each $n$.
\end{enumerate}

\subsection{Valuations}
Let $X$ be an integral Noetherian scheme, and
let $K(X)$ denote the function field of $X$.

\begin{defn}
A real valuation $v$ of $K(X)$ is a map $v:K(X)\rightarrow\R\cup\{\infty\}$ such that for every $f,g\in K(X)$, the following hold:
\begin{enumerate}
    \item $v(fg)=v(f)+v(g)$.
    \item $v(f+g)\geq \min\{v(f),v(g)\}$.
    \item $v(f)=\infty$ if and only if $f=0$.
\end{enumerate}
\end{defn}

\begin{defn}
Let $v$ be a real valuation of $K(X)$.
\begin{enumerate}
\item
$\sO_v :=\{f\in K(X) \mid v(f)\geq 0 \}$,
$\m_v :=\{f\in K(X) \mid v(f)>0 \}$,
and
$k(v):=\sO_v / \m_v$.
\item
We say that $v$ is centered on $X$ if there exists $z\in X$ such that $v$ is non-negative on $\sO_{X,z}$. 
In this situation, the induced map $\phi:\Spec \sO_v \rightarrow \Spec \sO_{X,z} \rightarrow X$ is independent of the choice of $z$,
and $c_X(v):=\phi(\m_v)$ is called the center of $v$ on $X$.
\item
We say that $v$ is a divisorial valuation over $X$
if $v$ is centered on $X$ and $\sO_v$ is a DVR essentially of finite type over $X$.
\end{enumerate}
\end{defn}

\begin{rem}\label{excval}
Suppose that $X$ is excellent.
Since the normalizations of schemes of finite type over $X$ is finite, a real valuation
$v$ on $K(X)$ is a divisorial valuation over $X$ if and only if there exist a projective birational morphism $\mu:Y\rightarrow X$ from a normal integral scheme and a prime divisor $\Gamma$ on $Y$ such that $v=\lambda\cdot\ord_\Gamma$ for some $\lambda\in\R_{>0}$.
\end{rem}

\subsection{Relative positivity properties of divisors}

In this section, we recall some definitions of relative positivity properties of divisors.

Let $f: X \rightarrow S$ be a projective surjective morphism of integral Noetherian schemes with $X$ normal.
Let $\sim_S$ (resp.~$\equiv_S$) denote the $f$-linear equivalence (resp.~$f$-numerical equivalence).
We write the generic fiber of $f$ by $X_{\eta}$.

\begin{defn}[{\cite[Definition 2.2 and 2.9]{Keeler},\cite[Definition 3.1]{Ushiro}}]
Let $L$ be a line bundle on $X$.
We say that $L$ is:
\begin{enumerate}
\item $f$-ample
if there exists an affine open cover $\{ U_{\lambda} \}$ of $S$ such that $L|_{f^{-1}(U_{\lambda})}$ is ample for every $\lambda$;
\item $f$-nef
if $(L.C)\geq 0$ for every irreducible curve $C\subset X$ that is closed in $X$ and satisfies that $f(C)$ is a closed point of $S$;
\item $f$-semiample if $f^{*}f_{*}L^{\otimes m}\rightarrow L^{m}$ is surjective for some $m\in\Z_{>0}$;
\item $f$-$\Q$-effective if $f_{*}L^{\otimes m}\neq 0$ for some $m\in\Z_{>0}$;
\item $f$-big if $f_{*}(L^{\otimes m}\otimes A^{-1})\neq 0$
for some $m\in\Z_{>0}$ and some $f$-ample line bundle $A$.
\end{enumerate}
These notions extend to $\Q$-Cartier $\Q$-divisors $D$ by requiring that some positive multiple $L:=\sO_X(mD)$ satisfies the corresponding properties.
\end{defn}

\begin{rem}
By \cite[Theorem 3.6]{Keeler},
${N}^{1}(X/S)_{\R} := (\Pic(X)/\equiv_S) \otimes_{\Z} \R$ is a finite dimensional $\R$-vector space.
Furthermore, by \cite[Proposition 2.10 and Theorem 3.9]{Keeler}, it is known that $f$-ampleness is an $f$-numerically invariant property and the real cone generated by numerical classes of $f$-ample line bundles is a non-empty open convex cone of ${N}^{1}(X/S)_{\R}$ with respect to the Euclidean topology.
\end{rem}

\begin{rem}
By \cite[Remark~3.2 and Proposition~3.5]{Ushiro}, the following are equivalent.
\begin{enumerate}
\item
$L$ is $f$-big.
\item
For every $f$-ample line bundle $A$, $f_{*}(L^{\otimes m}\otimes A^{-1})\neq 0$ for some $m\in\Z_{>0}$.
\item
$L|_{X_{\eta}}$ is big.
\end{enumerate}
\end{rem}

\begin{defn}[{\cite[Section 2]{Les}}]
Let $D$ be an $\R$-Cartier $\R$-divisor.
We say that $D$ is:
\begin{enumerate}
\item
$f$-ample if its numerical class lies in the real cone generated by numerical classes of $f$-ample line bundles;
\item
$f$-nef if $(D.C)\geq 0$ for every irreducible curve $C\subset X$ that is closed in $X$ and satisfies that $f(C)$ is a closed point of $S$;
\item
$f$-big if $D|_{X_{\eta}}$ is a big $\R$-Cartier $\R$-divisor on $X_{\eta}$;
\item
$f$-pseudo-effective if $D|_{X_{\eta}}$ is a pseudo-effective $\R$-Cartier $\R$-divisor on $X_{\eta}$.
\end{enumerate}
\end{defn}

\begin{rem}
In the situation above, it follows straightforward that the following are equivalent.
\begin{enumerate}
\item
$D$ is $f$-pseudo-effective.
\item
For every $f$-ample $\R$-Cartier $\R$-divisor $A$, $D+A$ is $f$-big.
\item
The numerical class of $D$ can be expressed as the limit of a sequence of numerical classes of $f$-$\Q$-effective $\Q$-Cartier $\Q$-divisors.
\end{enumerate}
\end{rem}

\subsection{Multiplier ideals}\label{secmult}
In this section, we recall the definitions of multiplier ideals in our setting and summarize some properties of them that we use later.

Let $X$ be a normal, integral, and excellent scheme admitting a dualizing complex ${\omega}_{X}^{\bullet}$, and let $\Gamma$ be a $\Q$-Cartier $\Q$-divisor on $X$.
For a proper birational morphism $\pi:Y\rightarrow X$ with $Y$ normal,
we write the exceptional pullback functor for $\pi$ by $\pi^{!}$.
Then ${\omega}_{Y}^{\bullet}:=\pi^{!}{\omega}_{X}^{\bullet}$ is a dualizing complex of $Y$.
By applying Grothendieck duality for $\pi^{!}$ and ${\omega}_{X}^{\bullet}$,
we obtain the canonical map
\[ \bR \pi_{*}{\omega}_{Y}^{\bullet}= \bR \pi_{*}\pi^{!}{\omega}_{X}^{\bullet} \rightarrow {\omega}_{X}^{\bullet}. \]
This map is called the trace map for $\pi$.
By \cite[Proposition~2.18]{BST}, this map induces an injective map
\[ {\Tr}_{\pi}:\pi_{*}{\omega}_{Y}(-\lfloor \pi^{*}\Gamma \rfloor) \hookrightarrow {\omega}_{X}(- \lfloor\Gamma\rfloor), \]
where ${\omega}_{X}$ (resp.~${\omega}_{Y}$) denotes the smallest nonzero cohomology of ${\omega}_{X}^{\bullet}$ (resp.~${\omega}_{Y}^{\bullet}$).

\begin{defn}[{\cite[Definition~2.26]{BST}}]\label{defmult}
The multiplier module $\J(\omega_X,\Gamma)$ is defined as
\[ \J(\omega_X,\Gamma):=\bigcap_{\pi:Y\rightarrow X} \Image ({\Tr}_{\pi}:\pi_{*}{\omega}_{Y}(-\lfloor \pi^{*}\Gamma \rfloor) \hookrightarrow {\omega}_{X}(- \lfloor\Gamma\rfloor)),\]
where $\pi$ runs through all proper birational morphisms $Y\rightarrow X$ with $Y$ normal.
For a log $\Q$-Gorenstein pair $(X,\Delta)$, the multiplier ideal $\J(X,\Delta)$ is defined by
\[ \J(X,\Delta):=\J(\omega_X, K_X +\Delta), \]
where $K_X$ is a canonical divisor associated with ${\omega}_{X}$. Since the choice of $K_X$ determines the inclusion ${\omega}_{X}\subset K(X)$, the multiplier ideal $\J(X,\Delta)$ can be regarded as an ideal of $\sO_X$. Note that the resulting ideal $\J(X,\Delta)\subset\sO_X$ is independent of the choice of $K_X$.
\end{defn}

In the rest of this section, we assume that $X$ is a $\Q$-scheme or a separated scheme of dimension at most $3$.
By \cite{Tem} and \cite{CP}, there exists a log resolution of $(X,\Gamma)$,
i.e. a projective birational morphism $\pi:Y\rightarrow X$ from a regular integral scheme such that ${\pi}^{-1}(\Supp\Gamma)\cup\mathrm{Ex}(\pi)$ is a divisor with simple normal crossings.
It is well-known that the multiplier module and the multiplier ideal are computed by any single log resolution, that is, the intersection in Definition~\ref{defmult} stabilizes at any single log resolution.
However, we record its proof here, adapting it in our situation.

\begin{prop}\label{singleresol}
With notation as above, for a log resolution $\pi:Y\rightarrow X$ of $(X,\Gamma)$,
\[ \J(\omega_X,\Gamma)= \Image ({\Tr}_{\pi}:\pi_{*}{\omega}_{Y}(-\lfloor \pi^{*}\Gamma \rfloor) \hookrightarrow {\omega}_{X}(- \lfloor\Gamma\rfloor)). \]
\end{prop}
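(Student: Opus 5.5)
The plan is the standard one. Every proper birational map to $X$ from a normal scheme can be dominated by a log resolution. The remaining work is to show that the trace image does not shrink when we pass from one log resolution to a higher one. Denote the image by $I_\pi:=\Image(\Tr_\pi)$. Trace maps are functorial: if $\pi'=\pi\circ\rho$, then $\Tr_{\pi'}=\Tr_\pi\circ\pi_*\Tr_\rho$, where $\Tr_\rho$ is taken for the divisor $\pi^*\Gamma$. Since $\lfloor\rho^*\pi^*\Gamma\rfloor$ is the floor of $\pi'^*\Gamma$, this gives $I_{\pi'}\subset I_\pi$. The intersection therefore equals the intersection over any cofinal family, and it suffices to prove $I_{\pi'}=I_\pi$ whenever $\pi'=\pi\circ\rho$ with $\pi$ a log resolution.

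\textbf{Step 1 (reduction to log resolutions).} Let $\pi_1:Y_1\to X$ be an arbitrary proper birational map with $Y_1$ normal, and let $\pi$ be the given log resolution. I take a log resolution of $(X,\Gamma)$ that dominates both $Y_1$ and $Y$. To construct it, resolve the closure of the graph of the birational map $Y_1\dashrightarrow Y$, and then further resolve so that the relevant loci become snc. This uses the existence results \cite{Tem}, \cite{CP}, which is why we assume $X$ is a $\Q$-scheme or separated of dimension at most $3$; in the latter case we must also keep track of the separatedness and excellence of the intermediate schemes. Chow's lemma lets us take everything projective. By the monotonicity above, it is then enough to compare $\pi$ with a log resolution $\pi'=\pi\circ\rho$ lying above it.

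\textbf{Step 2 (the key local statement).} Here $\rho:Y'\to Y$ is a proper birational map of regular schemes, and $G:=\pi^*\Gamma$ is a $\Q$-divisor on $Y$ with snc support. I claim that
\[\Tr_\rho:\rho_*\omega_{Y'}(-\lfloor\rho^*G\rfloor)\to\omega_Y(-\lfloor G\rfloor)\]
is surjective, in fact an isomorphism. Since $Y$ is regular, $\omega_Y$ is invertible. Write $K_{Y'}=\rho^*K_Y+E$, with $E$ effective and exceptional and equal to the relative canonical divisor. Then
\[\rho_*\omega_{Y'}(-\lfloor\rho^*G\rfloor)=\omega_Y\otimes\rho_*\sO_{Y'}\bigl(E-\lfloor\rho^*G\rfloor\bigr),\]
and I must show $\rho_*\sO_{Y'}(E-\lfloor\rho^*G\rfloor)=\sO_Y(-\lfloor G\rfloor)$. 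Set $\{G\}=G-\lfloor G\rfloor$. Then $\rho^*\lfloor G\rfloor$ is Cartier, so
\[E-\lfloor\rho^*G\rfloor=\rho^*(-\lfloor G\rfloor)+\bigl(E-\lfloor\rho^*\{G\}\rfloor\bigr).\]
By the projection formula it suffices to show that $E-\lfloor\rho^*\{G\}\rfloor$ is effective on exceptional divisors, i.e.\ that the pair $(Y,\{G\})$ is klt along the exceptional centers. This holds because $\{G\}$ has snc support with coefficients in $[0,1)$. The inequality $\ord_F(K_{Y'}-\rho^*(K_Y+\{G\}))>-1$ holds for every exceptional prime $F$ over a regular scheme with an snc boundary; the standard proof computes along a sequence of blowups, or uses the toric/monomial valuation estimate $a(F,Y,\sum D_i)\ge 0$ at a regular point where the $D_i$ are coordinate hyperplanes.

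\textbf{Step 3 (the main obstacle).} The hardest step is the discrepancy estimate in Step 2 in our generality, namely for a regular excellent scheme that is not of finite type over a field, including the mixed characteristic dimension-$\le3$ case. I would prove it by reducing to the local ring at the center $c_Y(F)$, taking the regular system of parameters extending the local equations of the components of $\{G\}$ through that point, and invoking the general fact that for a regular local ring and an snc divisor $\sum D_i$, every divisorial valuation $v$ satisfies $v(K_{Y'/Y})+1\ge\sum_i v(D_i)$. This in turn follows by comparing with the monomial valuation along the $D_i$, which is the log discrepancy bound for log-regular pairs. Combining this with the functoriality of trace from Step 1 gives $I_{\pi'}=I_\pi$, and hence the equality in Proposition~\ref{singleresol}.
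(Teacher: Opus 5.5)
Your proposal is correct and follows the paper's proof. The paper also uses domination by a common log resolution and functoriality of trace to reduce to showing that $\Tr_\rho$ is an isomorphism for $\rho$ lying over a log resolution. It then identifies this map with $\rho_*\sO_{Y'}(K_{Y'/Y}-\lfloor\rho^*G\rfloor)\hookrightarrow\sO_Y(-\lfloor G\rfloor)$, and concludes from the klt property of snc pairs with coefficients in $[0,1)$ on a regular scheme, exactly as in Lazarsfeld's Lemma~9.2.19.

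The only differences are minor. The paper cites Kollár's Corollary~2.11 for the discrepancy bound that you re-derive in Step~3. It also removes $\lfloor\pi^*\Gamma\rfloor$ by a ``we may assume'' rather than your explicit projection-formula splitting.
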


\begin{proof}
Since any log resolutions can be dominated by a third one, and trace maps are compatible with compositions, it is enough to show that for $\mu:Z\rightarrow Y$ such that $\pi\circ\mu:Z\rightarrow X$ is a log resolution of $(X,\Gamma)$, the trace map
\[ \Tr_{\mu}:\mu_{*} \omega_Z (-\lfloor\mu^{*}\pi^{*}\Gamma \lfloor) \hookrightarrow \omega_Y (-\lfloor\pi^{*}\Gamma\rfloor)\]
is an isomorphism. Set $\Gamma^{\prime}=\pi^{*}\Gamma$ By Chow's Lemma, we can further assume that $\mu$ is projective.
We also may assume that $\lfloor\Gamma^{\prime}\rfloor=0$.
Define a relative canonical divisor $K_{Z/Y}$ supported on the exceptional set of $\mu$ as \cite[Definition~2.4]{Kol}.
Then
${\mu}_{*} \omega_{Z} \hookrightarrow \omega_Y$
and
${\mu}_{*} \sO_{Z}(K_{Z/Y}) \hookrightarrow \sO_{Y}$ 
can be identified since these are reflexification maps.
The statement follows from \cite[Corollary~2.11]{Kol} and the same arguments as those in \cite[Lemma~9.2.19]{Laz}.
\end{proof}

\begin{defn}\label{kltdef}
For a log $\Q$-Gorenstein pair $(X,\Delta)$ and $x\in X$, we say that $(X,\Delta)$ is klt at $x$ if $\J(X,\Delta)_x =\sO_{X,x}$. This definition only depends on $(\sO_{X,x},\Delta_x)$ and is consistent with the definition in Section~\ref{sec:conv}(3).
We say that $(x\in X)$ is of klt type if there exists a $\Q$-divisor $\Delta_x\geq 0$ on $\Spec\sO_{X,x}$ such that $(\sO_{X,x},\Delta_x)$ is klt at $x$.
\end{defn}

\begin{prop}[{\cite[Proposition~9.5.13]{Laz}}]\label{regklt}
With notation as above, we further assume that $X$ is a regular $\Q$-scheme. For every point $x\in X$,
if $\ord_{\m_{x}}(\Delta)<1$, then $(X,\Delta)$ is klt at $x\in X$.
\end{prop}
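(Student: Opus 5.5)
This follows \cite[Proposition~9.5.13]{Laz}: take a log resolution and bound discrepancies by blowing up the point. Since $(X,\Delta)$ is klt at $x$ iff $\J(X,\Delta)_x=\sO_{X,x}$, and the multiplier ideal localizes, I will replace $X$ by $\Spec\sO_{X,x}$, a regular local $\Q$-scheme of dimension $n$, with $\Delta=\sum a_iD_i$ and $\ord_{\m_x}(\Delta)=\sum a_i\ord_{\m_x}(D_i)<1$. It suffices to show $\J(X,\Delta)$ is not contained in $\m_x$; since $\J(X,\Delta)$ is an ideal of the local ring, it is then the unit ideal.

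\textbf{Reduction.} I first reduce to $\Delta=c\cdot\divi(g)$ with a single $g\in\m_x$ and $c\,\ord_{\m_x}(g)<1$. Clearing denominators, write $N\Delta=\divi(g)$ with $g=\prod g_i^{Na_i}$, where $D_i=\divi(g_i)$ ($X$ is regular local, hence a UFD). Then $\Delta=\frac1N\divi(g)$ and $\ord_{\m_x}(g)=\sum Na_i\ord(g_i)$, because the associated graded ring of a regular local ring is a polynomial ring, hence a domain, so $\ord_{\m_x}$ is a valuation.

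\textbf{Main step.} I then use the characterization of $\J$ through a single log resolution (Proposition~\ref{singleresol}). Since $X$ is regular, $K_X$ is Cartier and $\J(X,\Delta)=\pi_*\sO_Y(K_{Y/X}-\lfloor\pi^*\Delta\rfloor)$. So it suffices to show that every prime divisor $E$ on $Y$ with center $x$ satisfies $\ord_E(K_{Y/X})-\ord_E(\pi^*\Delta)>-1$. Divisors whose center is not $x$ are handled by induction on $\dim\sO_{X,x}$: at their generic center $y\neq x$, $\sO_{X,y}$ is a localization of $\sO_{X,x}$ and $\ord_{\m_y}(\Delta)\le\ord_{\m_x}(\Delta)<1$ by upper semicontinuity of multiplicity. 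The base case is the codimension-one situation, where all coefficients are $<1$.

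For $E$ centered at $x$, I need two standard inequalities on a regular scheme:
\begin{enumerate}
\item $\ord_E(K_{Y/X})+1\ge n\,\ord_E(\m_x)$. This follows by factoring $\pi$ through the blowup of $x$, or from the fact that $\omega_{Y/X}$ contains the pullback of the Jacobian and that $\m_x$ contains a regular system of parameters.
\item $\ord_E(g)\le\ord_{\m_x}(g)\cdot c_E$ for a suitable constant $c_E$ comparable to $\ord_E(\m_x)$.
\end{enumerate}
The difficulty is that (2) fails in the stated form: $\ord_E(g)$ can exceed $\ord_{\m_x}(g)\ord_E(\m_x)$. Lazarsfeld's proof instead uses restriction to a general hypersurface or the log canonical threshold bound $\mathrm{lct}_x(g)\ge 1/\ord_x(g)$. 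In characteristic zero the cleanest route is the following. Pass to the completion $\widehat{\sO_{X,x}}\cong k[[x_1,\dots,x_n]]$ (possibly after a flat extension to a coefficient field), which preserves multiplier ideals by the excellent setting. Then apply the inversion-of-adjunction and restriction argument: restrict to a general smooth curve germ $C$ through $x$ along which $\ord_C(g)=\ord_{\m_x}(g)$, where klt-ness of $(C,c\,g|_C)$ is clear, and lift it via the restriction theorem for multiplier ideals.

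I expect the main obstacle to be making this restriction/completion step rigorous for an arbitrary excellent regular $\Q$-scheme rather than for varieties over an algebraically closed field. My first attempt will be to reduce to the complete case, and from there to finite type over a field by Artin approximation, where \cite[Proposition~9.5.13]{Laz} applies verbatim.
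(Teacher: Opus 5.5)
Your eventual argument is the paper's. You cut $\Spec\sO_{X,x}$ down by general sections through $x$ along which the order of $\Delta$ is preserved, lift klt-ness with the restriction theorem, and conclude in dimension $\le 1$, where it is trivial. The paper does this one hypersurface at a time. The residue field contains $\Q$, hence is infinite, so \cite[Proposition~8.5.7(3)]{HS} gives $h\in\m\setminus\m^2$ with $\ord_{\m}(\Delta)=\ord_{\m/hR}(\Delta|_{R/hR})$. The restriction formula $\J(R/hR,\Delta|_{R/hR})\subset\J(R,\Delta)\cdot R/hR$ and induction on $\dim R$ then give $\J(R,\Delta)+hR=R$, hence $\J(R,\Delta)=R$ since $h\in\m$. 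Your one-step restriction to a general curve germ is this step iterated $n-1$ times.

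Three things should change in your write-up.

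First, the discrepancy-based main step should be deleted rather than repaired. This includes the reduction to a single $g$ and the semicontinuity induction over non-closed centers. Inequality (2) is false, as you note. The bound $\mathrm{lct}_x(g)\ge 1/\ord_{\m_x}(g)$ you invoke is just the statement being proved.

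Second, the obstacle you anticipate does not exist. The restriction formula holds directly for excellent regular $\Q$-schemes by \cite[Theorem~A.1]{JM}, which is exactly what the paper cites. The general sections can also be chosen inside $\sO_{X,x}$ itself. So neither completion nor approximation is needed.

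Third, your completion/Artin-approximation plan would be a genuine gap if you relied on it. You would have to prove that multiplier ideals commute with completion. You would also have to prove that klt-ness at $x$ passes from a polynomial approximation $g'$ back to $g$. The latter is not automatic, because the log canonical threshold is not determined by finite jets: in $k[[u,v]]$ one has $\mathrm{lct}(u^2+v^N)=\frac12+\frac1N$ while $\mathrm{lct}(u^2)=\frac12$.
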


\begin{proof}
By localizing at $x$, we may assume that $X=\Spec R$ is the spectrum of an excellent regular local ring $(R,\m,k)$.
Since $k$ is infinite, by \cite[Proposition~8.5.7(3)]{HS}, there exists $h\in\m\setminus {\m}^2$ such that $\ord_{\m}(\Delta)=\ord_{\m/hR}(\Delta|_{R/hR})$.
By the restriction formula \cite[Theorem~A.1]{JM}, $\J(R/hR,\Delta|_{R/hR})\subset\J(R,\Delta)\cdot R/hR$.
Hence, the statement follows by the induction on $\dim R$.
\end{proof}

Let $S$ be an affine, integral, and excellent $\Q$-scheme admitting a dualizing complex ${\omega}_{S}^{\bullet}$, and let $f:X\rightarrow S$ be a projective surjective morphism.
Take a dualizing complex ${\omega}_{X}^{\bullet}:= f^{!}{\omega}_{S}^{\bullet}$ and an associated canonical divisor $K_X$.

\begin{defn}[{\cite[Definition~11.1.2]{Laz}}]
With notation as above, for a $\Q$-effective $\Q$-Cartier $\Q$-divisor $D$ on $X$, the asymptotic multiplier ideal $\J(X,\Delta,\|D\|)$ is defined as
\[ \J(X,\Delta,\|D\|):=\bigcup_{\substack{m\in\Z_{>0} \\ \text{$mD$ Cartier}}} \sum_{G\in |mD|} \J(X,\Delta + \frac{1}{m}G). \]
\end{defn}

\begin{prop}[{\cite[Proposition~9.4.26]{Laz}}]\label{uggmult}
With notation as above, let $A$ be a globally generated ample line bundle, $M$ be a line bundle on $X$, and $n$ be a positive integer such that $n\geq\max_{s\in S}\dim X_{s}$.
\begin{enumerate}
\item
If $M-(K_X +\Delta)$ is big and nef,
then $\J(X,\Delta)\otimes\sO_{X}(nA+M)$ is globally generated.
\item
If $M-(K_X +\Delta)-D$ is big and nef, then $\J(X,\Delta,\|D\|)\otimes\sO_{X}(nA+M)$ is globally generated.
\end{enumerate}
\end{prop}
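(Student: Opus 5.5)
The plan is to follow Lazarsfeld's argument for \cite[Proposition~9.4.26]{Laz}: a relative Kawamata--Viehweg vanishing on a log resolution, followed by a relative Castelnuovo--Mumford regularity argument. Part~(2) will then follow formally from part~(1).

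For part~(1), take a log resolution $\pi:Y\rightarrow X$ of $(X,\Delta)$, which exists by \cite{Tem}. By Proposition~\ref{singleresol},
\[\J(X,\Delta)\otimes\sO_X(M+jA)\cong \pi_*\sO_Y\bigl(K_Y-\lfloor\pi^*(K_X+\Delta)\rfloor+\pi^*(M+jA)\bigr)\]
for every $j\geq 0$, by the projection formula. Write $N_j:=\pi^*(M-(K_X+\Delta)+jA)$. This is a big and nef (over $S$) $\Q$-divisor on $Y$, and
\[K_Y-\lfloor\pi^*(K_X+\Delta)\rfloor+\pi^*(M+jA)=K_Y+\lceil N_j\rceil .\]
I would then invoke the Kawamata--Viehweg vanishing theorem for projective morphisms of excellent $\Q$-schemes admitting dualizing complexes (Murayama's relative version). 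This gives $R^i\pi_*\sO_Y(K_Y+\lceil N_j\rceil)=0$ for $i>0$, which is local vanishing. It also gives $R^i(f\circ\pi)_*\sO_Y(K_Y+\lceil N_j\rceil)=0$ for $i>0$, since $N_j$ is $(f\circ\pi)$-big and nef. Since $S$ is affine, the Leray spectral sequence yields
\[H^i\bigl(X,\J(X,\Delta)\otimes\sO_X(M+jA)\bigr)=0\qquad\text{for all } i>0,\ j\geq 0.\]
The main obstacle is this vanishing step: one must check that the required vanishing theorem is available in this generality (excellent, non-finite-type over a field, with the snc hypotheses coming from \cite{Tem}). If only the local vanishing is at hand, I would instead pull back to the generic fibre and spread out, but I expect the cited relative vanishing to suffice.

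Next apply Mumford regularity with respect to the globally generated ample $A$ on $X$ over the affine base $S$. Set $\mathcal{F}:=\J(X,\Delta)\otimes\sO_X(M+nA)$. For $1\leq i\leq n$ we have $H^i(X,\mathcal{F}(-iA))=H^i(X,\J(X,\Delta)\otimes\sO_X(M+(n-i)A))=0$ by the vanishing just proved. For $i>n$ the same groups vanish because $f$ has fibres of dimension at most $n$, so $R^if_*$ vanishes for $i>n$, and $S$ is affine. Hence $\mathcal{F}$ is $0$-regular with respect to $A$. The relative Castelnuovo--Mumford lemma then shows that $\mathcal{F}$ is globally generated. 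To apply it, use the morphism $X\rightarrow\PP^N_S$ given by sections of $A$; this reduces the lemma to the standard statement on $\PP^N$ over an affine base, via the Koszul complex.

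For part~(2), fix $m$ with $mD$ Cartier and $G\in|mD|$. Then $\frac1m G\equiv_S D$, so $M-(K_X+\Delta+\frac1mG)$ is numerically equivalent to $M-(K_X+\Delta)-D$, which is big and nef. Since $\Delta+\frac1mG$ is effective and $K_X+\Delta+\frac1mG$ is $\Q$-Cartier, part~(1) applied to the pair $(X,\Delta+\frac1mG)$ shows that $\J(X,\Delta+\frac1mG)\otimes\sO_X(nA+M)$ is globally generated. A sum of globally generated subsheaves of $\sO_X(nA+M)$ is globally generated. By Noetherianity the union over $m$ stabilizes at a single term, so $\J(X,\Delta,\|D\|)\otimes\sO_X(nA+M)$ is globally generated.
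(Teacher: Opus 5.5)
Your proposal is correct and follows essentially the same route as the paper. Both reduce, via relative Castelnuovo--Mumford regularity, to the vanishing of $H^i(X,\J(X,\Delta)\otimes\sO_X(M+(n-i)A))$ for $1\leq i\leq n$, compute $\J(X,\Delta)$ on a single log resolution using Proposition~\ref{singleresol}, obtain both the vanishing for $f\circ\pi$ and the local vanishing for $\pi$ from Murayama's relative Kawamata--Viehweg theorem, and deduce (2) from (1) term by term. The only differences are cosmetic: the paper also makes $\pi^{-1}(\Supp M\cup\Supp A)$ simple normal crossings and twists by an auxiliary $lA$ for the local vanishing, and neither is needed, since $M$ and $A$ are Cartier and $\pi^*(-(K_X+\Delta))$ is already $\pi$-nef and $\pi$-big.
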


\begin{proof}
(2) follows from (1) by the definition of $\J(X,\Delta,\|D\|)$. 
We prove (1).
We follow the same arguments as those in the absolute case \cite[Proposition~9.4.26]{Laz}.
By relative Castelnuovo-Mumford regularity (e.g. simple relativization of \cite[section 1.8]{Laz} or  \cite{Ooishi}), it is enough to show that
\[ R^{i}f_{*}(\J(X,\Delta)\otimes\sO_{X}(nA+M-iA))=0 \]
for every $i\in\Z_{>0}.$ For $i>\max_{s\in S}\dim X_{s},$ this is trivial. Assume that $i\leq\max_{s\in S}\dim X_{s}.$

Let $\pi:Y\rightarrow X$ be a proper birational morphism from a regular integral scheme such that ${\pi}^{-1}(\Supp(K_X +\Delta)\cup\Supp M\cup\Supp A)\cup\mathrm{Ex}(\pi)$ is a divisor with simple normal crossings.
By Proposition~\ref{singleresol} and Grothendieck-Leray spectral sequence, it is enough to show that
\begin{gather*}
R^i(f\circ\pi)_{*} \omega_{Y}(-\lfloor{\pi}^{*}(K_X +\Delta)\rfloor + {\pi}^{*}(M+(n-i)A)) \\
= R^i(f\circ\pi)_{*} \omega_{Y}(\lceil  {\pi}^{*}(M-(K_X +\Delta))+ (n-i){\pi}^{*}A \rceil)
=0,
\end{gather*}
and
\[ R^j \pi_{*} \omega_{Y}(-\lfloor{\pi}^{*}(K_X +\Delta) \rfloor ) =0,\]
for every $j\in\Z_{>0}.$

The first vanishing follows from
a generalization of Kawamata-Viehweg vanishing by \cite[Theorem A]{Mur}
since ${\pi}^{*}(M-(K_X +\Delta))+ (n-i){\pi}^{*}A$ is a nef and big divisor whose support is simple normal crossings.
The second vanishing is equivalent to
\[ R^j \pi_{*} \omega_{Y}(\lceil {\pi}^{*}(lA- (K_X +\Delta)) \rceil) =0\]
for some $0\ll l\in\Z$.
Since ${\pi}^{*}(lA- (K_X +\Delta))$ is a $\pi$-nef and $\pi$-big divisor whose support is simple normal crossings,
this also follows from \cite[Theorem A]{Mur}.
\end{proof}

\subsection{Test ideals in characteristic $p>0$}\label{sectest}
In this section, we recall the definitions of test ideals in characteristic $p>0$ in our setting and summarize some properties of them that we use later.

Let $X$ be a normal, integral, and $F$-finite Noetherian $\F_p$-scheme admitting a dualizing complex ${\omega}_{X}^{\bullet}$.
Take a canonical divisor $K_X$ associated with the smallest nonzero cohomology of $\omega_{X}^{\bullet}$.
Let $(X,\Delta)$ be a log $\Q$-Gorenstein pair.
We first recall the definition of the test ideal of $(X,\Delta)$.
We refer the readers to \cite{BSTZ}.

By applying Grothendieck duality for the $e$-th iteration of the absolute Frobenius that is denoted by $F^e$,
we have 
\[{\Hom}(F^e_* \sO_X ((1- p^e) (K_X +\Delta)),\sO_X)\cong F^e_* \Hom(\sO_X ((1- p^e)\Delta),\sO_X).\]
Let ${\phi}_{e,\Delta}:F^{e}_{*}\sO_{X}((1-q)(K_X +\Delta))\rightarrow\sO_{X}$ be the morphism corresponding to the natural inclusion $\sO_X ((1- p^e)\Delta)\subset\sO_X$.

When $(q-1)(K_X +\Delta)$ is Cartier for some $p$-power $q=p^{e_0} >1$,
the test ideal $\tau(X,\Delta)$ is defined to be the smallest nonzero coherent ideal $\mathcal{I}\subset\sO_X$ satisfying
\[{\phi}_{e,\Delta}(F^{e}_{*}(\mathcal{I}\cdot\sO_{X}((1-p^e)(K_X +\Delta))))\subset\mathcal{I}\]
for all positive multiples $e$ of $e_0$, that is known to exist.
In general, the test ideal $\tau(X,\Delta)$ is defined as
\[ \tau(X,\Delta):=\sum_{\Delta^\prime}\tau(X,\Delta^\prime), \]
where $\Delta^\prime$ runs through all $\Q$-divisors on $X$ such that $\Delta^\prime \geq\Delta$ and $(q-1)(K_X +\Delta^\prime)$ is Cartier for some $p$-power $q>1$.

For a $\Q$-effective $\Q$-Cartier $\Q$-divisor $D$ on $X$, the asymptotic test ideal $\tau(X,\Delta,\|D\|)$ is defined as
\[ \tau(X,\Delta,\|D\|):=\bigcup_{\substack{m\in\Z_{>0} \\ \text{$mD$ Cartier}}} \sum_{G\in |mD|} \tau(X,\Delta + \frac{1}{m}G). \]

\begin{defn}
With notation as above,
we say that $(X,\Delta)$ is strongly $F$-regular at $x\in X$ if $\tau(X,\Delta)_x =\sO_{X,x}$. This definition only depends on $(\sO_{X,x},\Delta_x)$.
We say that $x\in X$ is strongly $F$-regular if there exists a $\Q$-divisor $\Delta_x \geq 0$ on $\Spec\sO_{X,x}$ such that $(\sO_{X,x},\Delta_x)$ is strongly $F$-regular at $x$.
\end{defn}

Let $S$ be an affine, integral, and $F$-finite $\F_p$-scheme, and let $f:X\rightarrow S$ be a projective surjective morphism.
It is known that $S$ is excellent and admits a dualizing complex ${\omega}_{S}^{\bullet}$.
Take a dualizing complex ${\omega}_{X}^{\bullet}:= f^{!}{\omega}_{S}^{\bullet}$ and an associated canonical divisor $K_X$.

\begin{prop}[{\cite[Theorem~4.1]{Mus} and \cite[Proposition~4.1]{Sato}}]\label{uggtest}
With notation as above, let $A$ be a globally generated ample line bundle, $M$ be a line bundle on $X$, and $n$ be a positive integer such that $n>\max_{s\in S}\dim X_{s}$.
\begin{enumerate}
\item
If $M-(K_X +\Delta)$ is ample,
then $\tau(X,\Delta)\otimes\sO_{X}(nA+M)$ is globally generated.
\item
If $M-(K_X +\Delta)-D$ is ample, then $\tau(X,\Delta,\|D\|)\otimes\sO_{X}(nA+M)$ is globally generated.
\end{enumerate}
\end{prop}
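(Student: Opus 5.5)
The plan is to prove (1) by relative Castelnuovo--Mumford regularity and a Frobenius-trace vanishing argument, exactly as in the absolute case \cite[Theorem~4.1]{Mus}, and then to deduce (2) formally.

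For the deduction of (2), recall the definition of $\tau(X,\Delta,\|D\|)$. Since the union defining it stabilizes by Noetherianity, for suitable $m$ and finitely many $G_1,\dots,G_r\in|mD|$ we have $\tau(X,\Delta,\|D\|)=\sum_j\tau(X,\Delta+\frac1m G_j)$. For each $j$, the divisor $M-(K_X+\Delta+\frac1mG_j)$ is $\Q$-linearly equivalent to $M-(K_X+\Delta)-D$, hence ample. Applying (1) to each pair $(X,\Delta+\frac1mG_j)$ shows that each summand twisted by $\sO_X(nA+M)$ is globally generated, and a sum of globally generated subsheaves is globally generated.

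For (1), I first reduce to the case where $(p^{e_0}-1)(K_X+\Delta)$ is Cartier. The test ideal is a sum over $\Delta'\ge\Delta$ of this type, and by Noetherianity a finite sum suffices. One can choose such $\Delta'$ close enough to $\Delta$ that $M-(K_X+\Delta')$ stays ample, using openness of the ample cone: take $\Delta'=\Delta+\epsilon H$ with $H$ effective Cartier, after perturbing. Alternatively, $\tau(X,\Delta)=\tau(X,\Delta+\epsilon H)$ for small $\epsilon$ and a suitable effective Cartier $H$, by the usual stabilization.

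Next, by the relative Mumford regularity lemma (using that $S$ is affine and $A$ is globally generated and ample), it suffices to show
\[ H^i(X,\tau(X,\Delta)\otimes\sO_X(M+(n-i)A))=0\quad\text{for } 1\le i\le n-1 \]
or, for $i$ past the fiber dimension, that the cohomology vanishes trivially. Set $q=p^{e}$ with $e$ a large multiple of $e_0$. The standard fact $\phi_{e,\Delta}(F^e_*(\tau\cdot\sO_X((1-q)(K_X+\Delta))))=\tau$ gives a surjection
\[ F^e_*\bigl(\tau(X,\Delta)\otimes\sO_X((1-q)(K_X+\Delta)+qL)\bigr)\twoheadrightarrow\tau(X,\Delta)\otimes\sO_X(L),\qquad L=M+(n-i)A. \]
To pass from this surjection to vanishing, write $(1-q)(K_X+\Delta)+qL=(q-1)(L-K_X-\Delta)+L$, which is $L$ plus a large multiple of an ample divisor. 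I then use an Ooishi/Mumford-style argument: resolve $\tau\otimes\sO(L)$ by the Koszul-type complex built from sections of $A$, and run the surjection through the complex to get vanishing of $H^i$ for $i>0$ via Serre vanishing on the source. This is Mustaţă's argument, which uses the bound $n>\dim$ fibers to control the cohomological length. Because $F^e_*$ is exact and affine, higher cohomology of the source vanishes for $q\gg0$ by Serre vanishing relative to $S$ (uniformly in $i$, since there are finitely many twists).

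I expect the main obstacle to be the surjection-to-vanishing step. Surjectivity of a sheaf map does not immediately transfer $H^i$-vanishing; one needs the Castelnuovo--Mumford regularity of the whole Koszul resolution, exactly as in Mustaţă's proof, where global generation is obtained directly from global generation of the source sheaf via the surjective trace. I would try first to show that $F^e_*(\dots)\otimes\sO(nA)$ is $0$-regular with respect to $A$ for $q\gg0$, hence globally generated, and then push global generation through the surjective trace map; this avoids needing vanishing on $\tau$ itself.
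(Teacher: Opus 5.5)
Your final plan is exactly the paper's proof: show that $F^{e}_{*}\bigl(\tau(X,\Delta)\otimes\sO_{X}((1-q)(K_X+\Delta)+qM)\bigr)\otimes\sO_{X}(nA)$ is $0$-regular with respect to $A$ for $q\gg 0$, push global generation through the surjective trace map, and deduce (2) formally from (1). The only cosmetic difference is that you write the twist as $L+(q-1)(L-K_X-\Delta)$ with $L=M+(n-i)A$ and use Serre vanishing, while the paper uses relative Fujita vanishing with the nef term $q(n-i)A$. Drop the intermediate reduction to vanishing of $H^{i}(\tau(X,\Delta)\otimes\sO_{X}(M+(n-i)A))$ and the Koszul-resolution idea: that step would need a Kawamata--Viehweg-type vanishing for $\tau$, which is not available in characteristic $p$, and the argument does not need it.
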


\begin{proof}
(2) follows from (1) by the definition of $\tau(X,\Delta,\|D\|)$. 
We prove (1).
We follow the same arguments as those in the proof of the absolute case \cite[Theorem~4.1]{Mus} or \cite[Proposition~4.1]{Sato}.
By enlarging $\Delta$, we may assume that $(q-1)(K_X +\Delta)$ is Cartier for some $p$-power $q=p^e >1$.
Then ${\phi}_{e,\Delta}(\tau(X,\Delta)\cdot\sO_{X}((1-q)(K_X +\Delta)))=\tau(X,\Delta)$.
By tensoring this with $\sO_{X}(nA+M)$, we have a surjection
\[ F^{e}_{*}(\tau(X,\Delta)\otimes\sO_{X}((1-q)(K_X +\Delta)+q(nA+M)))\twoheadrightarrow\tau(X,\Delta)\otimes\sO_{X}(nA+M). \]
Thus, the proof reduces to show that $F^{e}_{*}\tau(X,\Delta)\otimes\sO_{X}((1-q)(K_X +\Delta)+q(nA+M))$ is globally generated.
By relative Castelnuovo-Mumford regularity (e.g. simple relativization of \cite[section 1.8]{Laz} or  \cite{Ooishi}), it is enough to show that
\[{H}^{i}(F^{e}_{*}(\tau(X,\Delta)\otimes\sO_{X}((1-q)(K_X +\Delta)+q(nA+M)))\otimes{\sO}_{X}(-iA))=0\]
for every $i\in\Z_{>0}$.
For $i>\max_{s\in S}\dim X_{s},$ this is trivial. Assume that $i\leq\max_{s\in S}\dim X_{s}$.
Note that $i<n$ and
\[ (1-q)(K_X +\Delta)+q(nA+M)-qiA=(K_X +\Delta)+q(M-(K_X +\Delta))+q(n-i)A. \] 
Since $M-(K_X +\Delta)$ and $A$ is ample,
this vanishing follows from relative Fujita's vanishing theorem \cite[Theorem~1.5]{Keeler} by taking any sufficiently large multiple of $e$.
\end{proof}

\subsection{Test ideals in mixed characteristic}

\subsubsection{\bf{BCM-test ideals}}\label{sec:BCMtestideal}

Let $(R,\m,k)$ be a complete normal Noetherian local domain of residue characteristic $p>0$.
Let $R^+$ denote the absolute integral closure of $R$ (i.e., the integral closure of $R$ in a fixed algebraic closure of the fraction field of $R$).

We first recall the definition of BCM-test ideals introduced in \cite{MS} from the perspective of  perfectoid big Cohen-Macaulay algebras.
See \cite[Section 2.1]{MS} for the definition of perfectoid big Cohen-Macaulay $R^+$-algebras.
By \cite[Theorem~2.5]{BMP}, it is known that the $p$-adic completion $\Rphat$ of $R$ is a perfectoid big Cohen-Macaulay $R^+$-algebra.

\begin{defn}[{\cite[Definition~6.2, 6.9, and Proposition~6.10]{MS}}]\label{BCMtestideal}
With notation as above,
set $d:=\dim R$.
Let $(R,\Delta)$ be a log $\Q$-Gorenstein pair, and let $E_R(k)$ be the injective hull of $k$ as an $R$-module.
Fix a canonical divisor $K_R \geq 0$.
Write
$K_R +\Delta=\frac{1}{m}\divi(f)$ for some $0\neq f\in R$ and some $m\in\Z_{>0}$.
Take a representing element $f^{\frac{1}{m}}\in R^+$.
Let $B$ be a perfectoid big Cohen-Macaulay $R^{+}$-algebra and
let $\phi:H_{\m}^{d}(R(K_R)) \rightarrow H_{\m}^{d}(B)$ denote the morphism such that
the composition of the natural morphism $H_{\m}^{d}(R) \rightarrow H_{\m}^{d}(R(K_R))$ and $\phi$ is identical to the morphism induced by the multiplication by $f^{\frac{1}{m}}$.

The BCM-test ideal of $(R,\Delta)$ with respect to $B$ is defined as
\[ \tau_{B}(R,\Delta) 
:= {\Image( E_R(k) \cong H_{\m}^{d}(R(K_R)) \xrightarrow{\phi} H_{\m}^{d}(B))}^{\vee} \subset R, \]
where ${(-)}^{\vee}$ denotes Matlis duality.
The BCM-test ideal of $(R,\Delta)$ is defined as
\[ \taubcm(R,\Delta) := \bigcap_{B} \tau_B (R,\Delta) \subset R.\]
We say
that $(R,\Delta)$ is $\BCM_B$-regular if $\tau_B (R,\Delta)=R$
and
that $(R,\Delta)$ is BCM-regular if $\taubcm(R,\Delta)=R$.
These definitions are independent of the choices of $K_R$ and $f^\frac{1}{m}\in R^+$.
\end{defn}

It is known that BCM-test ideals satisfies analogs of good properties of multiplier ideals in equal characteristic zero, such as restriction formula, summation formula, Skoda's theorem, and stability under small perturbations of divisors.
We refer the readers to \cite{MS}, \cite[Section 8.1]{BMP2}, and \cite{MST} for properties of BCM-test ideals.

Next, we define the notion of ``BCM-regular type'' for local rings that are not necessarily complete. For complete local rings, this has been already introduced by
\cite[Definition~5.3.1(d)]{CLM}.

\begin{defn}[{cf.~\cite[Definition~5.3.1(d)]{CLM}}]
Let $(A,\m_A,k)$ be a normal excellent local domain of residue characteristic $p>0$.
We say that $A$ is of BCM-regular type if there exists a $\Q$-divisor $\Delta_A \geq 0$ such that $(A,\Delta_A)$ is log $\Q$-Gorenstein and the $\m_A$-adic completion
$(\widehat{A},\widehat{\Delta_A})$ is BCM-regular.
When $A=\sO_{X,x}$ for a scheme $X$ and a point $x\in X$, we say that $(x\in X)$ is of BCM-regular type.
\end{defn}

We note that the underlying rings of local log-regular rings are of BCM-regular type.
This was proved by \cite[Proposition~5.3.5]{CLM} for complete local rings.

By the structure theorem of local log-regular rings due to Kato \cite[Theorem~2.22]{INS},
we can adapt the following as the definition of log-regularity.
See \cite[Section~2.2.1]{INS} for the precise definition of local log-regular rings and some properties of monoids.

\begin{thm}[{\cite[Theorem~2.22]{INS}}]\label{logregstructure}
Let $A$ be a normal excellent local domain of residue characteristic $p>0$.
Then $A$ is the underlying ring of a local log-regular ring
if and only if
there exist a fine, sharp, and saturated monoid $\mathcal{Q}$, a morphism of monoids $\alpha:\mathcal{Q}\rightarrow A$, and $r\in\Z_{\geq 0}$ satisfying one of the following:
\begin{enumerate}
\item 
$A$ is of characteristic $p>0$ and
there exists a commutative diagram
\[
  \xymatrix{
    \mathcal{Q} \ar[r] \ar[d]_{\alpha} & k[[\mathcal{Q}\oplus\N^{r}]] \ar[d]^{\phi} \\
    A \ar[r] & \widehat{A},
  }
\]
where $\phi$ is an isomorphism.
\item
$A$ is of mixed characteristic and
there exists a commutative diagram
\[
  \xymatrix{
    \mathcal{Q} \ar[r] \ar[d]_{\alpha} & {C}_{k}[[\mathcal{Q}\oplus\N^{r}]] \ar[d]^{\phi} \\
    A \ar[r] & \widehat{A},
  }
\]
where $\ker\phi =(\theta)$ for some $\theta\in {C}_{k}[[\mathcal{Q}\oplus\N^{r}]]$ whose constant term is $p$.
\end{enumerate}
\end{thm}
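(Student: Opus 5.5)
The plan is to derive the statement from Kato's definition of a local log-regular ring together with Kato's structure theorem for complete local log-regular rings. Recall that $(A,\alpha)$, with $\alpha:\mathcal{Q}\rightarrow A$ a morphism of monoids into the multiplicative monoid of $A$, is local log-regular if two conditions hold. First, $A/I_\alpha$ is regular, where $I_\alpha$ is the ideal generated by $\alpha(\mathcal{Q}\setminus\{0\})$. Second, $\dim A=\dim A/I_\alpha+\operatorname{rank}\mathcal{Q}^{\gp}$. We may assume $\mathcal{Q}$ is fine, sharp and saturated, with $\alpha^{-1}(A^\times)=\{0\}$, after replacing the log structure by its associated chart at the closed point. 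The key observation is that both conditions are insensitive to $\m_A$-adic completion. Indeed, $\widehat{A}/I_\alpha\widehat{A}\cong\widehat{A/I_\alpha}$, and a Noetherian local ring is regular if and only if its completion is. Moreover, dimension is preserved under completion, both for $A$ and for $A/I_\alpha$. Hence $(A,\alpha)$ is log-regular if and only if $(\widehat{A},\widehat{\alpha})$ is, where $\widehat{\alpha}$ is the composite $\mathcal{Q}\rightarrow A\rightarrow\widehat{A}$.

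For the ``only if'' direction, we apply Kato's structure theorem to the complete log-regular ring $(\widehat{A},\widehat{\alpha})$. Choose a coefficient ring: $k$ itself in equal characteristic $p$, and a Cohen ring $C_k$ in mixed characteristic. Choose also $r=\dim \widehat{A}/I_{\widehat{\alpha}}$ elements $t_1,\dots,t_r$ of $\widehat{A}$ whose images form a regular system of parameters of $\widehat{A}/I_{\widehat{\alpha}}$. Sending the monoid generators to $\widehat{\alpha}$ and the $\N^r$-generators to the $t_i$ defines a continuous surjection
\[
\phi: C_k[[\mathcal{Q}\oplus\N^r]]\twoheadrightarrow\widehat{A},
\]
respectively $k[[\mathcal{Q}\oplus\N^r]]\twoheadrightarrow\widehat{A}$. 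Surjectivity follows from the complete Nakayama lemma, since the images generate $\m_{\widehat{A}}$ modulo $p$. The source is a normal domain (because $\mathcal{Q}$ is saturated), and its dimension exceeds $\dim\widehat{A}$ by $0$ in equal characteristic and by $1$ in mixed characteristic. A dimension count therefore shows that $\ker\phi$ is a height-zero prime, hence zero, in the first case. In the second case $\ker\phi$ is a height-one prime. To show that it is principal, generated by an element $\theta$ with constant term $p$, write $p=\phi(g)$ for a power series $g$ with zero constant term and put $\theta:=p-g$. Then $\theta\in\ker\phi$. Modulo $\mathcal{Q}\setminus\{0\}$ and the $t_i$, both rings become regular and $\theta$ reduces to a regular parameter. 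From this, $C_k[[\mathcal{Q}\oplus\N^r]]/(\theta)$ has the dimension of $\widehat{A}$, and one shows it is a domain. Consequently $\ker\phi=(\theta)$. Commutativity of the diagram holds by construction.

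For the ``if'' direction, assume the diagram exists. It suffices to check that $(\widehat{A},\widehat{\alpha})$ is log-regular, and then descend using the first paragraph. In case (1), $\widehat{A}/I_{\widehat{\alpha}}\cong k[[\N^r]]$ is regular of dimension $r$, and $\dim\widehat{A}=\operatorname{rank}\mathcal{Q}^{\gp}+r$. In case (2), $\widehat{A}/I_{\widehat{\alpha}}\cong C_k[[\N^r]]/(\bar\theta)$, where $\bar\theta$ has constant term $p$. This ring is regular of dimension $r$, because $\bar\theta\notin(p,t_1,\dots,t_r)^2$ and $C_k[[t]]$ is regular of dimension $r+1$. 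Meanwhile $\dim\widehat{A}=\operatorname{rank}\mathcal{Q}^{\gp}+r+1-1$, and the log-regularity equality follows.

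The main obstacle is the mixed characteristic step showing $\ker\phi=(\theta)$. This requires proving that $C_k[[\mathcal{Q}\oplus\N^r]]/(\theta)$ is a domain of the correct dimension, which is where Kato's genuine input (equivalently, the argument in \cite[Theorem~2.22]{INS}) is used. I would first try to reduce it by flatness of $C_k[[\N^r]]\rightarrow C_k[[\mathcal{Q}\oplus\N^r]]$ together with the fact that $\theta$ is a nonzerodivisor, and otherwise simply cite Kato.
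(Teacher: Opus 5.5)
Your reductions are correct. Both of Kato's conditions are insensitive to completion. The equal characteristic case follows from the surjection $k[[\mathcal{Q}\oplus\N^r]]\twoheadrightarrow\widehat{A}$ and a dimension count. The construction of $\phi$ and of $\theta=p-g$ is fine, and the \emph{if} direction is a routine check.

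The paper gives no proof of this statement; it quotes \cite[Theorem~2.22]{INS}, which is Kato's structure theorem. So if you settle the remaining step by citing Kato, you are doing exactly what the paper does, and your reconstruction is not needed.

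As a self-contained proof, however, there is a genuine gap. It sits at the only non-formal point of the statement: the claim that $\bar B:=C_k[[\mathcal{Q}\oplus\N^r]]/(\theta)$ is a domain, i.e.\ that $\ker\phi=(\theta)$. Knowing that $\ker\phi$ is a height-one prime containing $\theta$ does not give this. The ring $C_k[[\mathcal{Q}\oplus\N^r]]$ is normal but in general not factorial, so its height-one primes need not be principal. Your flatness idea does not reach the problem either. The element $\theta$ typically mixes $p$ with the monoid variables, and when $r=0$ the base $C_k[[\N^r]]$ is just $C_k$. Flatness over $C_k$, together with $\theta$ being a nonzerodivisor, says nothing about whether $(\theta)$ is prime in $C_k[[\mathcal{Q}]]$.

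Here is one way to close the gap without circularity.
\begin{enumerate}
\item Your \emph{if} computation applies verbatim to $\bar B$. Let $I\subset\bar B$ be generated by the $x^q$ with $q\neq 0$, and set $s=\operatorname{rank}\mathcal{Q}^{\gp}$. Then $\bar B/I\cong C_k[[\N^r]]/(\bar\theta)=:R_0$ is regular of dimension $r$, and $\dim\bar B=r+s$.
\item Choose a homomorphism $\ell:\mathcal{Q}^{\gp}\to\Z$ that is positive on $\mathcal{Q}\setminus\{0\}$. Let $F^n\subset\bar B$ be the ideal generated by the $x^q$ with $\ell(q)\geq n$.
\item Since $IF^n\subseteq F^{n+1}$, the rule $\bar a\,x^q\mapsto ax^q \bmod F^{\ell(q)+1}$ defines a surjection of graded rings $R_0[\mathcal{Q}]\twoheadrightarrow\mathrm{gr}_F\bar B$.
\item Localize at the homogeneous maximal ideals. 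The source is a domain of dimension $r+s$. The target has dimension $\dim\bar B=r+s$, because the extended Rees algebra of $F$ is finitely generated. Hence the map is an isomorphism, and $\mathrm{gr}_F\bar B$ is a domain.
\item Let $c$ be the maximum of $\ell$ on a generating set of $\mathcal{Q}$. Then $F^{cn}\subseteq\m^n$, so the filtration is separated. Comparing initial forms shows that $\bar B$ is a domain, and hence $\ker\phi=(\theta)$.
\end{enumerate}
This is essentially Kato's argument that log-regular rings are domains.

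Finally, make explicit that in (2) the map $\phi$ is surjective, as in Kato. Your \emph{if} direction uses $\widehat{A}/I_{\widehat{\alpha}}\cong C_k[[\N^r]]/(\bar\theta)$, which needs surjectivity. Without it, condition (2) holds for every mixed characteristic $A$: take $\mathcal{Q}=0$, $r=1$ and $T\mapsto p$, so that $\ker\phi=(p-T)$.
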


\begin{prop}[{cf.~\cite[Proposition~5.3.5]{CLM}}]
With notation as above, suppose that $A$ is the underlying ring of a local log-regular ring.
Then $A$ is of BCM-regular type.
\end{prop}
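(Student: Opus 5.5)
The plan is to write down an explicit boundary $\Delta_A$ on $\Spec A$ coming from the log structure. I would check that it is log $\Q$-Gorenstein on $A$ itself, and then reduce BCM-regularity of its completion to the complete case \cite[Proposition~5.3.5]{CLM}.

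\textbf{Step 1: the toric boundary.} Fix a monoid morphism $\alpha:\mathcal{Q}\rightarrow A$ as in Theorem~\ref{logregstructure}. Let $F_1,\dots,F_s$ be the facets of $\mathcal{Q}$, and let $\mathfrak{p}_i\subset A$ be the ideal generated by $\alpha(\mathcal{Q}\setminus F_i)$. By Kato's theory of log-regular rings, each $\mathfrak{p}_i$ is a height one prime. I would take $D_i:=V(\mathfrak{p}_i)$. Since $A\rightarrow\Rhat$ with $\Rhat=\widehat{A}$ is flat with regular fibers ($A$ is excellent), $\widehat{D_i}$ is the corresponding toric boundary divisor of the complete log-regular ring $\widehat{A}$. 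This is visible from the presentation $k[[\mathcal{Q}\oplus\N^{r}]]$, resp.~${C}_{k}[[\mathcal{Q}\oplus\N^{r}]]/(\theta)$.

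\textbf{Step 2: a log $\Q$-Gorenstein boundary with coefficients $<1$.} By the computation of the canonical module of a log-regular ring (Kato, Gabber--Ramero), the divisor $K_A+\sum_i D_i$ is linearly equivalent to $0$. One can check this after completion and then descend, since $\Cl(A)\rightarrow\Cl(\widehat{A})$ is injective for excellent normal $A$ and $\omega_{\widehat{A}}=\omega_A\otimes\widehat{A}$.

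Choose $q\in\mathcal{Q}$ in the interior of the cone, i.e.\ $q\notin F_i$ for every $i$. Then
\[
\divi(\alpha(q))=\sum_i c_iD_i \quad\text{with } c_i\in\Z_{>0}.
\]
For a rational $0<t<1/\max_i c_i$, set
\[
\Delta_A:=\sum_i(1-tc_i)D_i\geq 0.
\]
Then $K_A+\Delta_A\sim -t\,\divi(\alpha(q))$ is $\Q$-Cartier, so $(A,\Delta_A)$ is log $\Q$-Gorenstein. Its completion $(\widehat{A},\widehat{\Delta_A})$ is again given by the same formula in terms of the log structure of $\widehat{A}$.

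\textbf{Step 3: BCM-regularity of the completion.} It remains to show that $(\widehat{A},\widehat{\Delta_A})$ is BCM-regular. This is the main obstacle. I would first try to read off from the proof of \cite[Proposition~5.3.5]{CLM} that their BCM-regular boundary on a complete log-regular ring is exactly of the form $\sum_i(1-tc_i)D_i$ coming from an interior monoid element. If it is not, I would argue directly as follows. By Lemma-type perturbation results for BCM-test ideals, it suffices to treat small $t$.

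For the direct argument, consider the perfectoid extension $B$ obtained by adjoining compatible $p$-power roots of $p$ and of all $\alpha(\mathcal{Q})$, and completing. The element $\alpha(q)^{t}$ acts on $H^d_{\m}(B)$. Then $\phi$ in Definition~\ref{BCMtestideal} is multiplication by $\alpha(q)^{t}$ composed with the natural map $H^d_{\m}(\omega_{\widehat A}(\sum_i D_i))\rightarrow H^d_{\m}(B)$, and $\omega(\sum_i D_i)\cong\widehat{A}$. Injectivity on the socle of $E(k)$ then reduces to the statement that the socle generator is not killed by $\alpha(q)^{t}$. This holds for small $t$ because $B$ is a monoid-graded almost Cohen--Macaulay algebra over the regular base. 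Finally, I would pass from $\BCM_B$-regularity to BCM-regularity via Lemma~\ref{replaceBCM}, i.e.\ \cite[Lemma~2.5.1 and Corollary~2.5.3]{CLM}, since any perfectoid BCM algebra can be dominated compatibly.
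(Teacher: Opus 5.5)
Your plan is essentially the paper's proof. The paper also uses the boundary $\sum_i(1-\epsilon b_i)D_i$ attached to an interior element $q\in\mathcal{Q}$. It recalls the construction in \cite[Proposition~5.3.5]{CLM} (adjunction from $C_k[[\mathcal{Q}\oplus\N^{r}]]$) to see that this is exactly the BCM-regular boundary on $\widehat{A}$ for small $\epsilon$. It then descends the $D_i$ to the height-one primes $\alpha(\q_i)A$ of $A$, using \cite[Proposition~3.10]{ishiro} and a height comparison with the completion. So the first option in your Step~3 is what actually happens. The fallback direct argument is therefore not needed, and as written its last step goes the wrong way: $\BCM_B$-regularity for one small $B$ does not imply it for larger $B$, and Lemma~\ref{replaceBCM} only produces larger algebras in which more classes die.
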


\begin{proof}
We only prove the mixed characteristic case since the positive characteristic case can be proven by similar arguments.
By \cite[Proposition~5.3.5]{CLM}, there exists a $\Q$-divisor $\Deltahat\geq 0$ such that $(\widehat{A},\Deltahat)$ is BCM-regular.
It suffices to show that such $\Deltahat$ can be taken as the pullback of some $\Q$-divisor $\Delta\geq 0$ on $\Spec A$.
For this, we recall the construction of $\Deltahat$ in \cite[Proposition~5.3.5]{CLM}, adapting it in our situation.

Set ${A}^{\prime}:=C_k[[\mathcal{Q}\oplus \N^{r}]]$.
By the assumptions on $\mathcal{Q}$,
the Grothendieck group $\mathcal{Q}^{\gp}$ is a free abelian group.
Set $\R\mathcal{Q}:=\mathcal{Q}^{\gp}\otimes_{\Z}\R$ and let $\R_{\geq0}\mathcal{Q}\subset\R\mathcal{Q}$ be the rational polyhedral cone generated by $\mathcal{Q}$.
By Gordan's lemma \cite[Proposition~6.1.2(a)]{BH}, $\mathcal{Q}=\mathcal{Q}^{\gp}\cap\R_{\geq0}\mathcal{Q}\subset\R\mathcal{Q}$.
Take the first lattice point $v_1,\cdots,v_t$ of extremal rays of the dual cone ${(\R_{\geq0}\mathcal{Q})}^{\vee}$.
For $1\leq i \leq t$, let ${D}_{i}^{\prime}$ be the Weil divisor on $\Spec {A}^{\prime}$ corresponding to the prime ideal generated by $\q_i:=\{ x\in\mathcal{Q} \mid v_i(x)>0 \}\subset\mathcal{Q}$.
Take an element $q\in\mathcal{Q}$ contained in the relative interior of $\R_{\geq0}\mathcal{Q}$.
Then $\divi_{A^\prime}(q)=\sum_{i=1}^{t}b_i {D}_{i}^{\prime}$ for some $b_i >0$.
By applying \cite[Lemma~2.5]{robinson} to $C_k [[\mathcal{Q}]]$, we can take the canonical divisor $K_{{A}^{\prime}}=-\sum_{i=1}^{t}{D}_{i}^{\prime}$.
Define the $\Q$-divisor $\Delta^{\prime}\geq 0$ on $\Spec A^{\prime}$ as
$\Delta^{\prime}:=\sum_{i=1}^{t}(1-\epsilon b_i){D}_{i}^{\prime}$, where $\epsilon\in\Q_{>0}$ is sufficiently small.
Then $\divi(p-f)$ and $\Delta^{\prime}$ have no common components and
\[ K_{{A}^{\prime}}+\divi(p-f)+\Delta^{\prime}=-\epsilon\sum_{i=1}^{t}b_i {D}_{i}^{\prime}=-\epsilon\divi(q) \]
is $\Q$-Cartier.
Let $\Deltahat\geq 0$ be the different of $K_{{A}^{\prime}}+\divi(p-f)+\Delta^{\prime}$ along $\divi(p-f)=\Spec \widehat{A}$.
Note that we can restrict $D_i^{\prime}$ to $\Spec \widehat{A}$ as a Weil divisor since $D_i^{\prime}$ is normal.
Then $\Deltahat=\sum_{i=1}^{t}(1-\epsilon b_i){D}_{i}$, where ${D}_{i}:={D_i^{\prime}}|_{\widehat{A}}$,
i.e., ${D}_{i}$ is corresponding to the reflexive hull of $\alpha(\q_i) \widehat{A}$.

It follows from \cite[Proposition~3.10]{ishiro} that $\alpha(\q_i)A$ is a non-zero prime ideal of $A$.
Hence,
$0< \mathrm{ht}(\alpha(\q_i) A)
\leq \mathrm{ht}{( \alpha(\q_i) \widehat{A} )}^{**}=1$.
Thus, we can define $\Delta\geq 0$ as a $\Q$-linear combination of the prime ideals $\alpha(\q_i)A$ of height one.
\end{proof}

\subsubsection{\bf{A unified theory of test ideals}}\label{sectestmixed}

In \cite{BMP2}, the authors introduced a version of test ideals for normal integral schemes of finite type over a DVR that satisfies both good local and global properties and that unifies some versions of test ideals previously proposed.
See \cite{BMP2} for the precise definition of thier test ideals.

\begin{prop}[{\cite[Definition 7.13, 7.20, Corollary~7.24, and Theorem B]{BMP2}}]\label{uniftest}
Let $(V,\m)$ be a DVR of mixed characteristic $(0,p>0)$.
For every triple $(X,\Delta,\ideala^{t})$, where $X$ is a normal integral scheme of finite type and flat over $V$, $(X,\Delta)$ is a log $\Q$-Gorenstein pair, $\ideala$ is a coherent ideal of $\sO_{X}$, and $t\in\Q_{\geq 0}$,
there exists a coherent ideal $\tau(X,\Delta,\ideala^{t})\subset\sO_{X}$ satisfying the following:

\begin{enumerate}
\item For an open subset $U$ of $X$ flat over $V$,
$\tau(X,\Delta,\ideala^{t})|_{U}=\tau(U,\Delta|_{U},{(\ideala\sO_U)}^{t}).$
\item $\tau(X,\Delta,\ideala^{t})[1/p]=\J(X[1/p],\Delta|_{X[1/p]},(\ideala\sO_{X[1/p]})^{t})$.
\item For every point $x\in X_{\m}$,
\[ \tau(X,\Delta,\ideala^{t})\cdot \widehat{\sO_{X,x}}=
\taubcm(\widehat{\sO_{X,x}},\widehat{\Delta_x},(\ideala\cdot\widehat{\sO_{X,x}})^{t}). \]
\item (Uniform global generation)
Suppose that $X$ is projective over $V$. For a line bundle $M$ on $X$ such that $M- K_X -\Delta$ is big and nef,
$\tau(X,\Delta)\otimes\sO_{X}(nA+M)$ is globally generated for every $n\in\Z_{>0}$ with $n\geq\dim X_{\m}$.
\item The restriction formula, the summation formula, Skoda's theorem, and stability under small perturbations of divisors are satisfied.
\end{enumerate}
\end{prop}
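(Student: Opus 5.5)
The statement gathers results from \cite{BMP2}, so the plan is to fix the right object from that paper and then check that each listed property matches their results in our normalization, rather than to build a new theory. First I would take $\tau(X,\Delta,\ideala^{t})$ to be the test ideal of \cite[Definition~7.13 and 7.20]{BMP2}. Here $(X,\Delta)$ is log $\Q$-Gorenstein, $X$ is flat and of finite type over $V$, and we fix $\omega_X^\bullet=f^!\omega_V^\bullet$ with an associated $K_X$, as in our conventions. Roughly, it is defined as the image under trace of the $p$-completed absolute integral closure, or equivalently of sufficiently large alterations, twisted by $-\pi^*(K_X+\Delta)$ and by $\ideala^t$. It is coherent because the image stabilizes. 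I would check once that the resulting ideal does not depend on the choice of $K_X$, by the same argument as in Definition~\ref{defmult}.

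Next I would verify the properties in order.
\begin{enumerate}
\item[(1)] Compatibility with restriction to open subsets flat over $V$. This holds because the construction is local and trace maps commute with flat base change.
\item[(2)] Agreement with the multiplier ideal after inverting $p$. This is \cite[Corollary~7.24]{BMP2}: on $X[1/p]$ the alteration-theoretic image equals $\J$ by Proposition~\ref{singleresol} together with the fact that resolutions compute multiplier ideals.
\item[(3)] Comparison with BCM-test ideals on the completions at points of the closed fibre $X_\m$. This is the local part of \cite[Theorem~B]{BMP2}. One should note that completion commutes with forming the BCM-test ideal for $\widehat{\Delta_x}$ and $(\ideala\widehat{\sO_{X,x}})^t$, which is stated there.
\item[(5)] The formal properties (restriction formula, summation formula, Skoda's theorem, and stability under small perturbations). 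I would cite \cite[Section~8.1]{BMP2}, transported through (3) and (2) where needed.
\end{enumerate}

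For (4), the plan is to follow the pattern of Propositions~\ref{uggmult} and \ref{uggtest}. Using relative Castelnuovo--Mumford regularity over $V$, it suffices to show
\[
H^i\bigl(X,\tau(X,\Delta)\otimes\sO_X(nA+M-iA)\bigr)=0 \quad\text{for } 0<i\le \dim X_\m .
\]
This vanishing is supplied by the global vanishing theorem for $R^+$ of \cite{BMP2} (the mixed characteristic Kawamata--Viehweg substitute), applied to the big and nef divisor $M-K_X-\Delta+(n-i)A$.

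I expect (4) to be the main obstacle. The theorem in \cite{BMP2} is stated for ample rather than big and nef divisors, and their vanishing holds only up to $p$-completion or after passing to $R^+$. My first attempt would be to write the big and nef divisor as an ample divisor plus a small effective divisor, absorb the effective part into $\Delta$, and then use perturbation stability from (5) to see that $\tau(X,\Delta)$ is unchanged. This requires care with the required bound $n\ge\dim X_\m$, and I would state the argument so that it reduces directly to \cite[Theorem~B]{BMP2}.
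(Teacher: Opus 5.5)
\textbf{The gap is in (4).} For (1)--(3) and (5) your plan agrees with the paper, which gives no independent argument: it identifies $\tau$ with the test ideal of \cite{BMP2} and cites the corresponding results.

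Your Castelnuovo--Mumford step reduces global generation to a genuine vanishing $H^i(X,\tau(X,\Delta)\otimes\sO_X(nA+M-iA))=0$ on $X$ itself. That vanishing is not available in mixed characteristic. Kawamata--Viehweg vanishing is not known there, and Bhatt's vanishing theorem, which \cite{BMP2} uses, only says that the relevant cohomology classes die after pullback to $X^+$, i.e.\ to a sufficiently large finite cover. Your perturbation trick is harmless: writing the big and nef divisor as ample plus a small effective divisor and absorbing the latter into $\Delta$ is the standard way to pass from ample to big and nef. But it does not touch this problem, because the vanishing still only holds upstairs.

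The argument behind \cite[Theorem~B]{BMP2} instead runs the regularity argument for $+$-stable sections. One shows that $\tau(X,\Delta)\otimes\sO_X(nA+M)$ is generated by images under trace of sections from finite covers. For that, one only needs the obstruction classes to vanish in $X^+$, not on $X$. This is the mixed-characteristic replacement for $F^e_*$ and Fujita vanishing in Proposition~\ref{uggtest}, and it is the idea your proposal is missing.

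\textbf{Non-complete $V$.} You never address that this global generation result, like Bhatt's vanishing, is stated over a complete base, whereas here $V$ is an arbitrary DVR of mixed characteristic. The paper handles this in the remark following the proposition, in three steps:
\begin{enumerate}
\item Over $\widehat{V}$, the sheaf $\tau(X,\Delta)\otimes\sO_X(nA+M)$ is globally generated by $+$-stable sections.
\item By \cite[Remark~7.15]{BMP2}, $\tau(X,\Delta)$ is compatible with the base change $V\to\widehat{V}$.
\item Global generation descends along the faithfully flat map $X_{\widehat{V}}\to X$, since $H^0$ commutes with flat base change.
\end{enumerate}
So even if you replace your vanishing argument by a direct citation of \cite[Theorem~B]{BMP2}, you still need this reduction to the complete case.
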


\begin{rem}
In the situation of (4), when $V$ is complete, $\tau(X,\Delta)\otimes\sO_{X}(nA+M)$ is globally generated by a submodule of $H^0 (X,\tau(X,\Delta)\otimes\sO_{X}(nA+M))$ called $+$-stable sections.
The global generation of $\tau(X,\Delta)\otimes\sO_{X}(nA+M)$ follows since $\tau(X,\Delta)$ is compatible with the base change of $V$ to the $\m$-adic completion $\widehat{V}$ by \cite[Remark~7.15]{BMP2}.
\end{rem}

Next, we recall the asymptotic version of these test ideals essentially introduced in \cite[Remark~8.42]{BMP2} and its uniform global generation property.

\begin{defn}[{\cite[Remark 8.42]{BMP2}}]
Let $(V,\m)$ be a DVR of mixed characteristic
and $X$ a normal integral scheme projective and flat over $V$.
For a log $\Q$-Gorenstein pair $(X,\Delta)$ and a big $\Q$-Cartier $\Q$-divisor $D$ on $X$,
the asymptotic test ideal of $(X,\Delta)$ associated to $D$ is defined as
\[ \tau(X,\Delta,\|D\|):=\bigcup_{\substack{m\in\Z_{>0} \\ \text{$mD$ Cartier}}} \sum_{G\in |mD|} \tau(X,\Delta + \frac{1}{m}G). \]
\end{defn}

\begin{prop}[{\cite[Remark 8.42]{BMP2}}]\label{uggmixed}
With notation as above,
let $A$ be a globally generated and ample line bundle on $X$.
If $M$ is a line bundle on $X$ such that $M- (K_X +\Delta)-D$ is big and nef, then $\tau(X,\Delta, \|D\|)\otimes\sO_{X}(nA+M)$ is globally generated for every $n\in\Z_{>0}$ with $n\geq\dim X_{\m}.$
\end{prop}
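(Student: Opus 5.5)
Proposition~\ref{uggmixed} is the asymptotic form of the uniform global generation in Proposition~\ref{uniftest}(4). The plan is to reduce it to that non-asymptotic statement, exactly as (2) is deduced from (1) in Propositions~\ref{uggmult} and \ref{uggtest}.

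First, I will show that the union defining $\tau(X,\Delta,\|D\|)$ stabilizes. For $m \mid m'$ with $mD$ Cartier and $G\in|mD|$, the divisor $\frac{m'}{m}G$ lies in $|m'D|$, so $\tau(X,\Delta+\frac1m G)=\tau(X,\Delta+\frac1{m'}\cdot\frac{m'}{m}G)$. Hence the sums $\sum_{G\in|mD|}\tau(X,\Delta+\frac1mG)$ form a directed family under divisibility. Since $X$ is Noetherian and these are coherent ideals, there is a single $m$, with $mD$ Cartier, such that
\[
\tau(X,\Delta,\|D\|)=\sum_{G\in|mD|}\tau\Bigl(X,\Delta+\tfrac1mG\Bigr).
\]
Because the family is directed, this sum even equals a finite subsum.

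Second, fix such an $m$ and $G\in|mD|$, and set $\Delta_G:=\Delta+\frac1mG$. This is effective, and $K_X+\Delta_G$ is $\Q$-Cartier because $G$ is Cartier, so $(X,\Delta_G)$ is a log $\Q$-Gorenstein pair. Since $\frac1mG\sim_{\Q}D$, we get
\[
M-(K_X+\Delta_G)\sim_{\Q} M-(K_X+\Delta)-D,
\]
which is big and nef by hypothesis. Bigness and nefness depend only on the $\Q$-linear (indeed numerical) class, so Proposition~\ref{uniftest}(4) applies to $(X,\Delta_G)$. It gives that $\tau(X,\Delta_G)\otimes\sO_X(nA+M)$ is globally generated for every $n\geq\dim X_{\m}$.

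Third, a sum of globally generated subsheaves of $\sO_X(nA+M)$ is globally generated. Tensoring with the line bundle $\sO_X(nA+M)$ is exact and commutes with sums of ideals, so $\tau(X,\Delta,\|D\|)\otimes\sO_X(nA+M)$ is generated by the global sections coming from the finitely many summands.

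The main obstacle is checking that Proposition~\ref{uniftest}(4) is applicable as stated. Two points need care. One is that its hypothesis is phrased with $M$ a line bundle and $M-K_X-\Delta$ big and nef; here the replacement of $D$ by the $\Q$-linearly equivalent $\frac1mG$ must be harmless, which I expect is fine since only the class enters. The other is that the test ideals of \cite{BMP2} are defined for pairs where $\Delta_G$ may contain vertical components, namely components of the special fiber coming from $G$. This is allowed because $X$ is flat over $V$ and \cite{BMP2} treats arbitrary effective $\Q$-divisors. If needed, I would cite \cite[Remark~8.42]{BMP2} directly for this asymptotic version.
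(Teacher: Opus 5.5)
Your proposal is correct and takes the same route as the paper. The paper deduces the statement from the non-asymptotic uniform global generation in Proposition~\ref{uniftest}(4), applied to each pair $(X,\Delta+\frac{1}{m}G)$, together with the definition of $\tau(X,\Delta,\|D\|)$ as a union of sums; your stabilization step is harmless but unnecessary, since any union or sum of globally generated subsheaves of $\sO_X(nA+M)$ is already globally generated.
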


\begin{proof}
This follows from Proposition~\ref{uniftest}(4) and the definition of $\tau(X,\Delta, \|D\|)$.
\end{proof}

\section{Relative versions of base loci and non-nef loci}\label{sec3}

\subsection{Relative base loci}

In this section, we define relative versions of augmented base loci and restricted base loci and investigate their properties.

\begin{defn}
Let $f: X \rightarrow S$ be a projective surjective morphism of integral Noetherian schemes with $X$ normal.
\begin{enumerate}
\item 
For a Cartier divisor $D$ on $X$,
the base locus of $D$ over $S$ is defined as
\[\Bs|D/S| := \Supp \Coker (f^{*}f_{*} \sO_{X}(D) \rightarrow \sO_{X}(D)). \]
\item
For a $\Q$-Cartier $\Q$-divisor $D$ on $X$,
the stable base locus of $D$ over $S$ is defined as
\[\B (D/S) := \bigcap_{\substack{m \in \mathbb{Z}_{>0} \\ \text{$mD$ Cartier}}} \Bs|mD/S|.\]
\item
For an $\R$-Cartier $\R$-divisor $D$ on $X$,
the augmented base locus of $D$ over $S$ is define as
\[\Bp (D/S) := \bigcap_{A}  \B (D-A/S), \]
where $A$ runs through all $f$-ample $\R$-Cartier $\R$-divisors such that $D-A$ is $\Q$-Cartier, and the restricted base locus of $D$ over $S$ is defined as
\[\Bm (D/S) := \bigcup_{A}  \B (D+A/S), \]
where $A$ runs through all $f$-ample $\R$-Cartier $\R$-divisors such that $D+A$ is $\Q$-Cartier.
\end{enumerate}
\end{defn}

\begin{rem}\label{baselocusfacts}
Fix a Euclidean norm $|\cdot|$ on ${N}^{1}(X/S)_{\R}$. Then the relative base loci defined above have the same properties as in the absolute case 
\cite[Proposition 1.4 - Corollary 1.6, Example 1.8, 1.9, Lemma 1.14 - Example 1.16, and Proposition 1.19 - 1.21]{ELMNP}. 
In particular, the following hold.
\begin{enumerate}
\item For every $\Q$-Cartier $\Q$-divisor $D$ and every sufficiently divisible $m \in \mathbb{Z}_{>0}$,
\[\B (D/S) = \Bs|mD/S|. \]
\item For every $\R$-Cartier $\R$-divisor $D$ and every $f$-ample $\R$-Cartier $\R$-divisor $A$ such that $|A| \ll 1$ and $D-A$ is $\Q$-Cartier,
\[\Bp (D/S) = \B (D-A/S). \]
\item For every $\R$-Cartier $\R$-divisor $D$ and
every sequence of $f$-ample $\R$-Cartier $\R$-divisors $(A_i)_{i \in \mathbb{Z}_{>0}}$ satisfying $|A_i| \xrightarrow{i\to\infty} 0$,
\[\Bm (D/S) = \bigcup_{i\in \mathbb{Z}_{>0}}  \Bp (D+ A_i /S)
=\bigcup_{i\in \mathbb{Z}_{>0}}  \Bm (D+ A_i /S). \]
\end{enumerate}
\end{rem}

\begin{rem}
By Remark~\ref{baselocusfacts}, $\B (D/S)$ and $\Bp (D/S)$ are closed subsets of $X$, and $\Bm (D/S)$ is a countable union of closed subsets of $X$.
We do not consider any scheme structures on them.
\end{rem}

\begin{rem}
Suppose that $S$ is affine. For any Cartier divisor $D$ on $X$,
\[\Bs|D/S| = \bigcap_{s \in {H}^{0}(X,D)} \Supp \Coker (\sO_{X} \xrightarrow{s} \sO_{X}(D)). \]
Thus, $\Bp (D/S)$ and $\Bm (D/S)$ do not depend on the choice of the base $S$ since ampleness also does not (\cite[Proposition 2.15]{Keeler}).
Hence, we write $\Bp (D/S)$ and $\Bm (D/S)$ simply as $\Bp (D)$ and $\Bm (D)$, respectively.
\end{rem}

First, we show that the relative augmented base locus and the relative restricted base locus commute with flat base change.

\begin{prop}\label{bcbaselocus}
Let $f: X \rightarrow S$ be a projective morphism from a normal integral scheme to an integral Noetherian scheme.
Let $g:S'\rightarrow S$ be a flat morphism of integral Noetherian schemes such that $X':=X\times_{S}S'$ is normal (e.g., an open immersion or a localization at any point of $S$).
For every $\R$-Cartier $\R$-divisor $D$ on $X$,
\begin{equation*}
\begin{aligned}
{g'}^{-1}(\Bp(D/S)) = \Bp({g'}^{*}D/S'), \\
{g'}^{-1}(\Bm(D/S)) = \Bm({g'}^{*}D/S'),
\end{aligned}
\end{equation*}
where $g':X' \rightarrow X$ is the natural morphism.
\end{prop}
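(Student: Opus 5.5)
Since all three loci are built from the relative base locus $\Bs|L/S|$ of Cartier divisors, I will reduce everything to flat base change for $f_*$.

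\textbf{Step 1: Cartier divisors.} Let $f':X'\to S'$ be the base change. For a Cartier divisor $L$ on $X$, flat base change gives $g^{*}f_{*}\sO_X(L)\cong f'_{*}\sO_{X'}({g'}^{*}L)$ (the map $f$ is projective, hence quasi-compact and separated, and $g$ is flat). Hence the cokernel of $f'^{*}f'_{*}\sO_{X'}({g'}^{*}L)\to\sO_{X'}({g'}^{*}L)$ is ${g'}^{*}$ of the cokernel on $X$. The support of the pullback of a coherent sheaf is the preimage of its support, so
\[ {g'}^{-1}\Bs|L/S| = \Bs|{g'}^{*}L/S'|. \]
Pullback commutes with taking multiples, and by Remark~\ref{baselocusfacts}(1) the stable base locus equals $\Bs|mD/S|$ for a single sufficiently divisible $m$. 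Therefore ${g'}^{-1}\B(D/S)=\B({g'}^{*}D/S')$ for every $\Q$-Cartier $\Q$-divisor $D$.

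\textbf{Step 2: ampleness and the norm.} If $A$ is $f$-ample, then ${g'}^{*}A$ is $f'$-ample, since ampleness over affine pieces is preserved under base change for projective morphisms. So every divisor of the form $D\pm A$ pulls back to a divisor of the same form. For $\Bp$, Remark~\ref{baselocusfacts}(2) gives $\Bp(D/S)=\B(D-A/S)$ for one small $f$-ample $A$ with $D-A$ rational. I pick $A=\sum a_iH_i$, where the $H_i$ are fixed $f$-ample Cartier divisors spanning an open subcone and the coefficients $a_i$ are small. Then ${g'}^{*}A$ is also $f'$-ample, and it is small with respect to any norm on $N^{1}(X'/S')_{\R}$.

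The main obstacle is that pullback need not induce an injective map on $N^1$. However, it is a linear map ${g'}^{*}:N^{1}(X/S)_{\R}\to N^{1}(X'/S')_{\R}$, which is well-defined because $f$-numerically trivial classes pull back to $f'$-numerically trivial ones. Indeed, a curve in a fiber over $s'$ maps to a fiber over $g(s')$, and intersection numbers are compatible with field extension up to degree. Being linear between finite-dimensional spaces, it is continuous, so small classes go to small classes.

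To conclude for $\Bp$, I take $A_k\to0$ with $D-A_k$ rational. For large $k$ both $\Bp(D/S)=\B(D-A_k/S)$ and $\Bp({g'}^{*}D/S')=\B({g'}^{*}D-{g'}^{*}A_k/S')$ hold, the second by continuity. Step 1 then gives ${g'}^{-1}(\Bp(D/S))=\Bp({g'}^{*}D/S')$.

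\textbf{Step 3: $\Bm$.} Choose $f$-ample $A_i$ with $|A_i|\to0$. Then ${g'}^{*}A_i$ are $f'$-ample with norms tending to $0$, by the continuity established above. Remark~\ref{baselocusfacts}(3) applied on both sides gives
\[ {g'}^{-1}\Bm(D/S)=\bigcup_i{g'}^{-1}\Bp(D+A_i/S)=\bigcup_i\Bp({g'}^{*}D+{g'}^{*}A_i/S')=\Bm({g'}^{*}D/S'). \]
Here the middle equality is the $\Bp$ case of Step 2, and taking preimages commutes with unions. The only genuinely delicate point is the continuity and well-definedness of ${g'}^{*}$ on $N^{1}$ used in Step 2.
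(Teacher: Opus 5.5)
Your route is the paper's. Flat base change for $f_{*}$ gives ${g'}^{-1}\Bs|L/S|=\Bs|{g'}^{*}L/S'|$ for Cartier $L$, and the rest follows from Remark~\ref{baselocusfacts} and the preservation of relative ampleness under base change. You also make explicit the compatibility with the norms on $N^{1}$, which the paper passes over. That extra step is the one place where your justification does not work as written.

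\textbf{Well-definedness of ${g'}^{*}$ on $N^{1}$.} Your argument only works when $g(s')$ is a closed point of $S$ and the curve maps finitely onto a curve. Neither need hold.
\begin{itemize}
\item For the localization $S'=\Spec\sO_{S,\xi}$ at a non-closed point $\xi$, which the statement explicitly covers, every curve over a closed point of $S'$ lies in $X_{\xi}$. $f$-numerical triviality is defined through curves over closed points of $S$, so it says nothing directly about $X_{\xi}$.
\item $\kappa(s')/\kappa(g(s'))$ may be transcendental (e.g.\ $\Spec k(t)\to\Spec k$). Then the image of a curve need not be a curve, and ``compatibility up to degree'' does not apply.
\end{itemize}
The claim is true. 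Proving it, however, needs a specialization argument from $\xi$ to a closed point of $\overline{\{\xi\}}$, together with invariance of numerical triviality under arbitrary field extensions.

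The same issue enters your assertion that ${g'}^{*}A$ is $f'$-ample for an $f$-ample $\R$-divisor $A$. Your justification via affine pieces is valid for line bundles, but $f$-ampleness of $\R$-divisors is defined numerically.

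\textbf{Repair.} You can bypass all of this with the device you already mention, used in place of ``continuity'' both in the $\Bp$ conclusion and in Step~3.
\begin{itemize}
\item Write $D=\sum_j d_jD_j$ with $D_j$ Cartier. Fix an $f$-ample Cartier divisor $H$ and $N\gg0$ such that every $H_j:=D_j+NH$ is $f$-ample.
\item Put $A=\sum_j\epsilon_jH_j+\epsilon NH$, with $\epsilon_j,\epsilon>0$ small and chosen so that each $d_j-\epsilon_j$ and also $\sum_j\epsilon_j+\epsilon$ are rational.
\end{itemize}
Then $D-A=\sum_j(d_j-\epsilon_j)D_j-(\sum_j\epsilon_j+\epsilon)NH$ is $\Q$-Cartier. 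Next, ${g'}^{*}A$ is a positive combination of $f'$-ample Cartier divisors, hence $f'$-ample by definition. Finally, by the triangle inequality, both $|A|$ and the norm of ${g'}^{*}A$ in $N^{1}(X'/S')_{\R}$ tend to $0$ with the coefficients. For $\Bm$, simply take $A_i=\frac{1}{i}H$. With this change your proof goes through without any statement about ${g'}^{*}$ on $N^{1}$.
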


\begin{proof}
Since taking global sections commutes with flat base change, ${g'}^{-1}(\Bs|D/S|)=\Bs|{g'}^{*}D/S'|$ for every Cartier divisor $D$ on $X$.
Thus, the statements follow from Remark~\ref{baselocusfacts} and the fact that relative ampleness is preserved under base change.
\end{proof}

Next, we summarize the relations between the variants of relative base loci and the relative positivity properties.

\begin{prop}
Let $f: X \rightarrow S$ be a projective surjective morphism of integral Noetherian schemes with $X$ normal.

For any $\Q$-Cartier $\Q$-divisor $D$ on $X$,
\begin{enumerate}
\item $ \B (D/S) = \emptyset$ if and only if $D$ is $f$-semiample, and
\item $ \B (D/S) \subsetneq X$ if and only if $D$ is $f$-$\Q$-effective.
\end{enumerate}

For any $\R$-Cartier $\R$-divisor $D$ on $X$,
\begin{enumerate}[resume]
\item $ \Bp (D/S) = \emptyset$ if and only if $D$ is $f$-ample,
\item $\Bp (D/S) \subsetneq X$ if and only if $D$ is $f$-big,
\item $ \Bm (D/S) = \emptyset$ if and only if $D$ is $f$-nef, and
\item $\Bm (D/S) \subsetneq X$ if and only if $D$ is $f$-pseudo-effective.
\end{enumerate}
\end{prop}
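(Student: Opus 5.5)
I would prove the six equivalences in order, reducing each to its absolute counterpart over an affine base or to the definitions. By Proposition~\ref{bcbaselocus} and the remark that $\Bs|D/S|$ over affine $S$ is an intersection of zero loci of global sections, all loci are compatible with restriction to an affine open cover of $S$. Both sides of each equivalence are local on $S$: semiampleness, ampleness and nefness are local on the base by definition, while bigness, pseudo-effectivity and $\Q$-effectivity are generic-fiber conditions. So I may assume $S=\Spec R$ is affine throughout.

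For (1), the definition gives $\Bs|mD/S|=\emptyset$ exactly when $f^*f_*\sO_X(mD)\to\sO_X(mD)$ is surjective. Together with Remark~\ref{baselocusfacts}(1), this yields (1) directly. For (2), $\Bs|mD/S|\neq X$ iff $f_*\sO_X(mD)\neq 0$, because $X$ is integral and a nonzero section of a line bundle has proper zero locus. Since $\B(D/S)$ stabilizes for divisible $m$, and a nonzero section for $m$ gives one for every multiple of $m$, (2) follows. For (3), I use Remark~\ref{baselocusfacts}(2) to write $\Bp(D/S)=\B(D-A/S)$ with $A$ small. If this is empty, then $D-A$ is $f$-semiample by (1), so $D=(D-A)+A$ is $f$-ample, since semiample plus ample is ample (relative version via the ample cone in $N^1(X/S)_\R$, \cite{Keeler}). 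Conversely, if $D$ is $f$-ample, choose $A$ small so that $D-A$ is an $f$-ample $\Q$-divisor. Some multiple is then relatively very ample over affine $S$, so $\B(D-A/S)=\emptyset$. For (4), $\Bp(D/S)\neq X$ iff $D-A$ is $f$-$\Q$-effective for some small ample $A$ by (2). This is equivalent to $D|_{X_\eta}$ being big, by the characterization of $f$-bigness in the Remark citing \cite{Ushiro} together with openness of the big cone on $X_\eta$.

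For (5) and (6), I use Remark~\ref{baselocusfacts}(3): $\Bm(D/S)=\bigcup_i\Bp(D+A_i/S)$ with $|A_i|\to0$. If $D$ is $f$-nef, then each $D+A_i$ is $f$-ample, since nef plus ample is ample in the relative setting (Kleiman-type statement from \cite{Keeler}). Hence each term is empty by (3). Conversely, if $\Bm(D/S)=\emptyset$, then every $D+A_i$ is $f$-ample, and taking the limit shows $D$ is $f$-nef, because nefness is a closed condition defined by intersection numbers. For (6), $\Bm\neq X$ iff some $\Bp(D+A_i/S)\neq X$. Here I need that $X$ is not a countable union of proper closed subsets; this holds because $X$ is irreducible and Noetherian, so the generic point avoids every proper closed subset. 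So this condition holds iff some $D+A_i$ is $f$-big by (4). Since the $\Bm(D+A_i)$ are increasing as $i$ grows, this is in turn equivalent to every $D+A$ being $f$-big for every $f$-ample $A$, i.e. $D$ being $f$-pseudo-effective.

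The main obstacle is the relative Kleiman-type input in (3) and (5): that $f$-nef plus $f$-ample is $f$-ample, and that an $f$-ample $\Q$-divisor over an affine base has an $f$-globally generated multiple. I would invoke \cite[Theorem 3.9, Proposition 2.10]{Keeler} for the former, and the definition of $f$-ampleness over affine pieces for the latter. The remaining steps are formal.
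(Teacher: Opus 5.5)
Your handling of (1)--(5) essentially follows the paper. (1)--(3) are formal from the definitions and Remark~\ref{baselocusfacts}, and (5) comes from (3) together with the relative Kleiman/Nakai input from \cite{Keeler}. For (4) you argue directly via (2) and Ushiro's criterion for $f$-bigness. The paper instead base-changes to the generic fiber $X_\eta$ and uses the absolute statement there. Your route is also fine.

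The argument for (6), however, gets a quantifier backwards. Since $\Bm(D/S)=\bigcup_i\Bp(D+A_i/S)$ and each term is closed, $\Bm(D/S)\neq X$ holds if and only if the generic point lies in none of the terms. That means \emph{every} $\Bp(D+A_i/S)\neq X$, not \emph{some}. The condition ``some $\Bp(D+A_i/S)\neq X$'' is strictly weaker. For example, take $X=\PP^n_k$, $D=-H$, $A_1=2H$ and $A_i=\frac{1}{i}H$ for $i\geq 2$. Then $\Bp(D+A_1)=\emptyset$, but $\Bm(D)=X$.

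Your bridge ``some $D+A_i$ is $f$-big, hence by monotonicity every $D+A$ is $f$-big'' also fails. For an arbitrary sequence $(A_i)$ there is no monotonicity. When there is one (say $A_i-A_{i+1}$ is $f$-ample), the loci $\Bp(D+A_i/S)$ grow with $i$, so $i=1$ gives the weakest condition, not the strongest.

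The repair is short:
\begin{enumerate}
\item By the corrected first step and (4), $\Bm(D/S)\neq X$ if and only if every $D+A_i$ is $f$-big.
\item This is equivalent to $D+A$ being $f$-big for every $f$-ample $A$. Given $A$, openness of the $f$-ample cone and $|A_i|\to 0$ make $A-A_i$ $f$-ample for $i\gg 0$. Then $D+A=(D+A_i)+(A-A_i)$ is big on $X_\eta$. The converse is trivial, since each $A_i$ is $f$-ample.
\end{enumerate}
Incidentally, you do not need countability: any union of proper closed subsets misses the generic point.
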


Note that $\Bp (D/S) \subsetneq X$ (resp.~$\Bm (D/S) \subsetneq X$)
means that $\Bp (D/S)$ (resp.~$\Bm (D/S)$) does not contain the generic point of $X$.

\begin{proof}
(1), (2), and (3) are straightforward from the definition. (Note that the reducedness of $X$ is used in (2). See \cite[Proposition 2.17]{Ushiro}.)
(5) follows from (3) and the Nakai criterion \cite[Proposition~2.10 and 2.11]{Keeler}.
Let $X_{\eta}$ denotes the generic fiber of $f$.
(4) follows from the fact that $\Bp (D/S) \subsetneq X$ is equivalent to $\Bp (D|_{X_{\eta}}) \subsetneq X_{\eta}$ by Proposition~\ref{bcbaselocus}. (6) follows in a similar manner.
\end{proof}

Lastly, we show that, even in the relative setting, $\Bp(D/S)$ has no isolated closed points.
In the absolute setting, this was proved in \cite[Proposition~1.1]{ELMNP2} using Zariski's theorem, which asserts a line bundle $L$ is semiample if $\Bs|L|$ is a finite set of closed points.
Moret-Baily \cite[Theorem~1.2]{Moret} generalized Zariski's theorem to the relative setting.

\begin{lem}[The relative Fujita-Zariski theorem, {\cite[Theorem~1.2]{Moret}}]\label{fujzar}
Let $f: X \rightarrow S=\Spec R$ be a proper morphism of Noetherian schemes, and let $L$ be a line bundle on $X$.
If ${L|}_{\Bs|L|}$ is ample, then $L$ is semiample. (Note that the ampleness of ${L|}_{\Bs|L|}$ does not depend on the scheme structure of $\Bs|L|$.)
\end{lem}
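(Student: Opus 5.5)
We plan to follow Fujita's original argument for Zariski's theorem, carried out over the affine base $S=\Spec R$ (this is essentially the approach of \cite{Moret}). The statement is local on nothing but $X$, so we may work globally on $X$ with $R$ Noetherian. Let $V:=H^0(X,L)$; it is a finite $R$-module because $f$ is proper. Choose generators $s_0,\dots,s_k$ of $V$, and let $\mathcal{I}\subset\sO_X$ be the image of $V\otimes L^{-1}\rightarrow\sO_X$. Then $B:=V(\mathcal{I})$ is a scheme structure on $\Bs|L|$. Ampleness of a line bundle on a Noetherian scheme depends only on the underlying reduced scheme, so $L|_{B_n}$ is ample for every infinitesimal thickening $B_n:=V(\mathcal{I}^n)$. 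This justifies the parenthetical remark in the statement.

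\textbf{Step 1: reduce to lifting sections.} It suffices to find $m>0$ such that $H^0(X,L^m)\rightarrow H^0(B,L^m|_B)$ has image generating $L^m|_B$. Indeed, the sections $s_i^m$ already generate $L^m$ on $X\setminus B$. If the lifts generate along $B$, then $L^m$ is generated everywhere, since generation at a point of $B$ can be tested modulo $\mathcal{I}$ by Nakayama. Because $L|_B$ is ample over the affine base, $L^m|_B$ is globally generated for all $m\gg0$. So the whole problem is the \emph{surjectivity} (or just generation) for the restriction map to $B$, which is governed by $H^1(X,\mathcal{I}L^m)$.

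\textbf{Step 2: Fujita's graded-module argument.} Multiplication by $s_i$ gives maps $H^1(X,\mathcal{I}^{n}L^{m})\rightarrow H^1(X,\mathcal{I}^{n+1}L^{m+1})$. Moreover $V\otimes\mathcal{I}^nL^m\rightarrow\mathcal{I}^{n+1}L^{m+1}$ is surjective by the definition of $\mathcal{I}$. We then consider the short exact sequences
\[0\rightarrow\mathcal{I}^{n+1}L^{m}\rightarrow\mathcal{I}^{n}L^{m}\rightarrow(\mathcal{I}^n/\mathcal{I}^{n+1})\otimes L^m\rightarrow 0.\]
The terms $\mathcal{I}^n/\mathcal{I}^{n+1}$ form a coherent graded module over the Rees algebra restricted to $B$. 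Using ampleness of $L|_B$ (Serre vanishing for the finitely generated graded $\sO_B$-algebra $\bigoplus\mathcal{I}^n/\mathcal{I}^{n+1}$), we get $H^1(B,(\mathcal{I}^n/\mathcal{I}^{n+1})\otimes L^m)=0$ for $m\geq m_0$ \emph{uniformly in} $n$. It follows that $H^1(X,\mathcal{I}^{n+1}L^m)\rightarrow H^1(X,\mathcal{I}^nL^m)$ is surjective for $m\geq m_0$.

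We then combine this with the Artin--Rees-type finiteness of the graded module $\bigoplus_{n,m}H^1(X,\mathcal{I}^nL^m)$ over $\mathrm{Sym}_R(V)$. Via the Rees algebra, it is a finitely generated bigraded module by coherence of higher direct images along $\Spec_X\bigoplus\mathcal{I}^nt^n$ and its Proj. The goal is to conclude that the map $H^1(X,\mathcal{I}L^{m})\rightarrow H^1(X,L^m)$ is zero for $m\gg0$, equivalently that $H^0(X,L^m)\rightarrow H^0(B,L^m|_B)$ is surjective.

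\textbf{Main obstacle.} The delicate point is this last finiteness/nilpotence step. The modules $H^1(X,\mathcal{I}^nL^m)$ are finite over $R$ but not of finite length, so Fujita's original dimension count over a field is unavailable. The first thing we would try is to pass to the blow-up $\pi:\widetilde{X}=\mathrm{Bl}_{\mathcal{I}}X\rightarrow X$ with exceptional divisor $E$. There $\pi^*L(-E)$ is globally generated by the $s_i$, and $\sO(-E)$ is $\pi$-ample. We would then express the bigraded module as $\bigoplus H^1(\widetilde{X},\pi^*L^m(-nE))$ up to finitely many corrections, using the theorem on formal functions along $E$. Finite generation over $\mathrm{Sym}_R(V)\otimes R[t]$ then comes from the coherence of cohomology for the proper morphism to $\mathbb{P}^k_S$ induced by $\pi^*L(-E)$. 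Once finite generation is in hand, the surjectivity in Step 2 together with a degree argument gives the desired vanishing of the map into $H^1(X,L^m)$ for $m\gg0$, and Step 1 finishes the proof.
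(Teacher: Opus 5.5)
The paper gives no proof of this lemma; it simply quotes Moret-Bailly. So your sketch has to stand on its own, and as written it does not. Step~1 and the remark on scheme structures are fine. But the decisive step is left open, and the claims meant to lead to it are false.

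First, the vanishing of $H^1(B,(\mathcal{I}^n/\mathcal{I}^{n+1})\otimes L^m)$ for $m\ge m_0$ \emph{uniformly in $n$} fails. Take a principally polarized abelian surface $X$ over a field with $L=\sO_X(\Theta)$, where $\Theta$ is a smooth genus-$2$ curve. Then $h^0(L)=1$, $\mathcal{I}=\sO_X(-\Theta)$, $B=\Theta$ and $\deg L|_B=\Theta^2=2$. However, $(\mathcal{I}^n/\mathcal{I}^{n+1})\otimes L^m\cong\sO_\Theta((m-n)\Theta)$ has nonzero $H^1$ for all $n\ge m$. Even your consequence fails: for $n=m$ the map $H^1(X,\mathcal{I}^{m+1}L^m)=H^1(X,\sO_X(-\Theta))=0\to H^1(X,\mathcal{I}^mL^m)=H^1(X,\sO_X)\neq0$ is not surjective, for every $m$. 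What is true is vanishing only in a diagonal cone $m\ge n+c$. The reason is that on $E=\mathrm{Proj}_B\bigoplus_n\mathcal{I}^n/\mathcal{I}^{n+1}$ the sheaf $\sO_E(1)$ is only $p$-ample. Since $\sO_E(1)\otimes p^*L$ is globally generated by $V$ and $\sO_E(1)\otimes p^*L^2$ is ample, relative Fujita vanishing applies, together with Serre vanishing for finitely many small $n$.

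Second, your target is misstated. Surjectivity of $H^0(X,L^m)\to H^0(B,L^m|_B)$ means the connecting map $\partial_m\colon H^0(B,L^m|_B)\to H^1(X,\mathcal{I}L^m)$ vanishes, i.e.\ that $H^1(X,\mathcal{I}L^m)\to H^1(X,L^m)$ is \emph{injective}. It is not equivalent to that map being zero, and ``zero'' is false in general: for $L=\sO_X$ with $H^1(X,\sO_X)\neq0$ the map is the identity.

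Third, the bigraded finite generation over $\mathrm{Sym}_R(V)\otimes R[t]$ is unsupported. Coherence along $\phi$ only controls the direction $(n,m)\mapsto(n+1,m+1)$, one value of $m-n$ at a time. The ``degree argument'' is never supplied.

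The missing idea, which closes the argument with your own ingredients, is that $\mathrm{Im}\,\partial_m$ is killed by $V$. Each $s\in V$ is a section of $\mathcal{I}L$, so $s|_B=0$, and naturality of connecting maps gives $s\cdot\partial_m(\tau)=\partial_{m+1}(s|_B\,\tau)=0$.

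Now fix $c$ as in the cone vanishing. Then $H^1(X,\mathcal{I}^nL^{n+c})\twoheadrightarrow H^1(X,\mathcal{I}L^{n+c})$, compatibly with multiplication by $V$. For $n\gg0$ the source equals $H^1(\widetilde X,(\pi^*L(-E))^{\otimes n}\otimes\pi^*L^{c})$ on the blow-up. Since $\pi^*L(-E)=\phi^*\sO(1)$ for the morphism $\phi\colon\widetilde X\to\PP^k_R$ given by the $s_i$, Leray and Serre's theorems for $R^i\phi_*(\pi^*L^c)$ make $\bigoplus_n H^1(X,\mathcal{I}^nL^{n+c})$ a finitely generated graded $\mathrm{Sym}_R(V)$-module. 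Hence so is $Q=\bigoplus_n H^1(X,\mathcal{I}L^{n+c})$.

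The submodule of $Q$ annihilated by $V$ is finitely generated over $\mathrm{Sym}_R(V)/(V)=R$, hence concentrated in finitely many degrees. Therefore $\partial_m=0$ for $m\gg0$, and your Step~1 finishes the proof.

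Incidentally, the obstacle you named about lengths is not the real one. A chain of surjections $M_0\twoheadrightarrow M_1\twoheadrightarrow\cdots$ of finite modules over a Noetherian ring is eventually stationary, because the kernels of $M_0\to M_j$ form an ascending chain.
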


\begin{cor}\label{clpt}
Let $f: X \rightarrow S$ be a projective surjective morphism of integral Noetherian schemes with $X$ normal, and let $D$ be an $\R$-Cartier $\R$-divisor on $X$. Then the following hold.
\begin{enumerate}
\item $\Bp(D/S)$ has no isolated closed points. 
\item If $x\in\Bm(D/S)$ is a closed point of $X$, then there exists an irreducible closed subset $Z\subset X$ such that $\{x\}\subsetneq Z \subset \Bm(D/S)$.
\end{enumerate}
\end{cor}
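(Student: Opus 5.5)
Part (1) comes from Lemma~\ref{fujzar}, following \cite[Proposition~1.1]{ELMNP2}; part (2) then follows from (1) and Remark~\ref{baselocusfacts}(3).

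\emph{Part (1).} By Proposition~\ref{bcbaselocus}, augmented base loci commute with restriction to open subsets of $S$. We may therefore replace $S$ by an affine open neighbourhood of $f(x)$ and assume $S=\Spec R$ is affine. Suppose $x$ is an isolated closed point of $\Bp(D/S)$.
\begin{enumerate}
\item By Remark~\ref{baselocusfacts}(2), choose an $f$-ample $A$ with $|A|\ll 1$ and $D-A$ a $\Q$-Cartier $\Q$-divisor, so that $\Bp(D/S)=\B(D-A/S)$.
\item By Remark~\ref{baselocusfacts}(1), choose $m$ sufficiently divisible with $L:=\sO_X(m(D-A))$ a line bundle and $\Bs|L/S|=\B(D-A/S)$.
\item Write $\Bs|L/S|=\{x\}\sqcup W$, where $W$ is closed and $x\notin W$.
\end{enumerate}
We want to remove $x$ from the base locus. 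Replace $A$ by $A'=A-\epsilon H$ for a small rational $\epsilon$ and an $f$-ample $H$ with $A'$ still $f$-ample; then $D-A'=(D-A)+\epsilon H$. Since $\B((D-A)+\epsilon H/S)\subset\B(D-A/S)$, it suffices to show $x\notin\B(D-A'/S)$ for suitable choices.

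The mechanism is the relative Fujita--Zariski theorem. Restricted to a point, any line bundle is ample. So if we can reduce to a line bundle whose base locus is exactly $\{x\}$, Lemma~\ref{fujzar} makes it semiample, and then $x$ is not in the stable base locus. The obstruction is the other component $W$. To handle it, pick a section $s$ of $\sO_X(kH)$ that vanishes on $W$ but not at $x$; this exists because $H$ is $f$-ample and $x\notin W$, with $k\gg 0$.

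The main obstacle is that $s\cdot(\text{sections of }L)$ does not by itself show that $L^{a}\otimes\sO_X(bH)$ has base locus contained in $\{x\}$: near $W$ we are still stuck. My first approach is to apply Lemma~\ref{fujzar} to the closed subscheme-free situation $L' = L^{N}\otimes \sO_X(H)$. Its base locus lies in $\{x\}\sqcup W$. On $W$, $L'|_W$ is ample only if $L|_W$ is nef, which we do not know.

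So instead I would follow \cite{ELMNP2} more literally. Their argument shows $\B(D-A/S)=\Bp(D/S)$ coincides with $\B(D-A-\epsilon H/S)$ for all small $\epsilon$, using that $\Bp$ is stable under such perturbations. By Remark~\ref{baselocusfacts}(2), $\Bp(D/S)=\B(D-A''/S)$ for every small $A''$, which gives enough freedom to locally twist. Then apply Lemma~\ref{fujzar} to a line bundle on a neighbourhood, namely the open $X\setminus W$, which is proper over $S$ only when $W=\emptyset$. To fix this, I would first apply Lemma~\ref{fujzar} to the restriction of $L$ to the fibre-connected components after Stein factorization of $f$.

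\emph{Part (2).} By Remark~\ref{baselocusfacts}(3), $x\in\Bp(D+A_i/S)$ for some $i$, where $A_i$ is $f$-ample with $|A_i|\to 0$. By (1), $x$ is not isolated in the closed set $\Bp(D+A_i/S)$. Hence some irreducible component $Z$ of $\Bp(D+A_i/S)$ through $x$ has positive dimension, so $\{x\}\subsetneq Z$. Since $\Bp(D+A_i/S)\subset\Bm(D/S)$, we get $Z\subset\Bm(D/S)$.
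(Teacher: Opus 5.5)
Your part (2) is fine: Remark~\ref{baselocusfacts}(3) puts $x$ in a closed set $\Bp(D+A_i/S)\subset\Bm(D/S)$, and a closed point that is not isolated in a Noetherian closed set lies on an irreducible component strictly larger than $\{x\}$. The reduction at the start of (1) and the mechanism you name are also right: a line bundle whose base locus is at most the point $x$ is semiample by Lemma~\ref{fujzar}.

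The gap is that you never get rid of the other part $W$ of $\Bs|L/S|=\{x\}\sqcup W$, and none of your attempts can.
\begin{itemize}
\item Perturbing $A$ is useless. By Remark~\ref{baselocusfacts}(2), $\B(D-A''/S)=\Bp(D/S)\supset W$ for \emph{every} small $f$-ample $A''$.
\item $X\setminus W$ is not proper over $S$, so Lemma~\ref{fujzar} does not apply to it.
\item Stein factorization does not separate $x$ from $W$. For $S=\Spec k$ there is a single fibre, $X$ itself. Moreover, semiampleness on a fibre would not give global sections of powers of $L$ on $X$ that are nonvanishing at $x$.
\end{itemize}
So part (1), and with it part (2), is not proved as written.

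The missing idea is to blow up the part of the base ideal supported on $W$.
\begin{itemize}
\item Let $\mathfrak{b}\subset\sO_X$ be the base ideal of $|L/S|$. Let $\mathfrak{a}$ be the coherent ideal equal to $\mathfrak{b}$ on $X\setminus\{x\}$ and to $\sO_X$ on $X\setminus W$. Then $\mathfrak{b}=\mathfrak{a}\cdot\q$ with $\q$ cosupported at $x$.
\item Let $\pi\colon X'\to X$ be the blow-up of $\mathfrak{a}$, with $\mathfrak{a}\sO_{X'}=\sO_{X'}(-F)$. Then $X'$ is proper over the affine $S$, and $\pi$ is an isomorphism over $X\setminus W\ni x$.
\item Every $\pi^{*}\sigma$ with $\sigma\in H^0(X,L)$ is divisible by the equation of $F$. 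This gives sections of $M:=\pi^{*}L\otimes\sO_{X'}(-F)$ whose base ideal is $\q\sO_{X'}$. Hence $\Bs|M/S|\subset\{x'\}$, where $x'=\pi^{-1}(x)$.
\item Lemma~\ref{fujzar} now applies, so $M$ is semiample. Pick $\tau\in H^0(X',M^{\otimes k})$ with $\tau(x')\neq 0$, and let $s_F$ be the canonical section of $\sO_{X'}(F)$. Then $\tau\cdot s_F^{k}$ is a section of $\pi^{*}L^{\otimes k}$ not vanishing at $x'$.
\item Since $\pi_{*}\sO_{X'}=\sO_X$ by normality of $X$, we have $H^0(X',\pi^{*}L^{\otimes k})=H^0(X,L^{\otimes k})$. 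So this section is a section of $L^{\otimes k}$ not vanishing at $x$.
\item Therefore $x\notin\B(D-A/S)=\Bp(D/S)$, a contradiction.
\end{itemize}
This argument in fact shows that stable base loci $\B(\,\cdot\,/S)$ have no isolated closed points. With this step inserted, your outline becomes the paper's proof, which combines Remark~\ref{baselocusfacts}(2), Lemma~\ref{fujzar}, and the argument of \cite[Proposition~1.1]{ELMNP2}.
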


\begin{proof}
We may assume that $S$ is affine by Proposition~\ref{bcbaselocus}.
(1) follows from Remark~\ref{baselocusfacts}(2), Lemma~\ref{fujzar}, and the same arguments as in \cite[Proposition 1.1]{ELMNP2}. (2) follows from (1) and the definition of $\Bm(D/S)$.
\end{proof}

\subsection{Relative non-nef loci}

In this section, we introduce and investigate relative versions of asymptotic and numerical vanishing orders along divisorial valuations and non-nef loci.
In the absolute setting, they were introduced by \cite[Chapter~3]{Nak}, \cite{ELMNP}, and \cite[Section~2.2]{BBP}, and in the relative setting, for projective morphisms of complex varieties, they were introduced by
\cite[Chapter~3, Section~4]{Nak} and \cite[Section~3]{LX}.

We first define relative asymptotic vanishing orders along prime divisors.
There are two different ways to define them, and we show in Lemma~\ref{asyord} that they coincide. This has been already shown in \cite[Lemma~3.2]{LX} for smooth complex varieties.

\begin{defn}[{cf.~\cite[Definition 3.1]{LX} and \cite[Chapter~3, Section 4.a]{Nak}}]
Let $f: X \rightarrow S$ be a projective surjective morphism of integral excellent schemes with $X$ normal.
Let $D$ be a $\Q$-Cartier $\Q$-divisor on $X$, and let $\Gamma\subset X$ be a prime divisor on $X$. We define $\ord_{\Gamma}(X/S,\|D\|)$ and $\ord_{\Gamma}(\|D\|,X/S)$ by
\begin{equation*}
\begin{gathered}
\ord_{\Gamma}(X/S,\|D\|) := \inf_{m\in\Z_{>0}} \dfrac{1}{m} \min \left\{ \ord_{\Gamma}D' \middle| mD \sim_{S} D' \geq 0 \right\}, \\
\ord_{\Gamma}(\|D\|,X/S) := \inf_{m\in\Z_{>0}} \dfrac{1}{m} \max \left\{ l \in \Z_{\geq 0} \middle| f_{*}\sO_{X}(mD-l\Gamma) \xrightarrow{\sim} f_{*}\sO_{X}(mD) \right\},
\end{gathered}
\end{equation*}
where $m$ runs through all positive integers such that $mD$ is Cartier.
If there are no positive integers $m$ such that
$\{ D' \mid mD \sim_{S} D' \geq 0 \}\neq\emptyset$, we set $\ord_{\Gamma}(X/S,\|D\|) := +\infty$, and if the maximum in the latter equation does not exist, we set
$\ord_{\Gamma}(\|D\|,X/S):=+\infty$.
\end{defn}

\begin{rem}\label{ordrem}
When $S$ is affine, for every $\Q$-effective $\Q$-Cartier $\Q$-divisor $D$ on $X$, 
\[ \ord_{\Gamma}(\|D\|,X/S) = \inf_{m\in\Z_{>0}} \dfrac{1}{m} \min \left\{ \ord_{\Gamma}D' \middle| mD \sim D' \geq 0 \right\}.  \] 
\end{rem}

\begin{lem}\label{ordaffine}
With notation as above, let $S^0 \subset S$ be an open subset with $f(\Gamma)\cap S^0 \neq \emptyset$. Set $X^0 := f^{-1}(S^0)$, $D^0 := D|_{X^0}$, $f^0:=f|_{X^0}$, and ${\Gamma}^{0} := \Gamma \cap X^0$.
Then the following hold.
\begin{enumerate}
\item $ \ord_{\Gamma^{0}}(X^0 / S^0 , \|D^0\|)=\ord_{\Gamma}(X/S,\|D\|)$.
\item $\ord_{\Gamma^{0}}(\|D^0\| , X^0 / S^0)=\ord_{\Gamma}(\|D\|,X/S).$
\end{enumerate}
\end{lem}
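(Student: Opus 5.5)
Both parts reduce to comparing, for each $m$ with $mD$ Cartier, the quantity inside the infimum over $S$ with the corresponding quantity over $S^0$. It is enough to show that each per-$m$ quantity is unchanged, or at least that the two infima agree. The basic input is that $f_*$ commutes with restriction to open subsets, so $f_*\sO_X(mD-l\Gamma)|_{S^0}=f^0_*\sO_{X^0}(mD^0-l\Gamma^0)$. We also use that $\Gamma$ is irreducible and $f(\Gamma)\cap S^0\neq\emptyset$, so $\Gamma^0$ is a nonempty open subset of $\Gamma$ and the generic point of $\Gamma$ lies in $X^0$.

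\textbf{Part (2).} Let $\mathcal{F}_l:=f_*\sO_X(mD-l\Gamma)\subset\mathcal{F}_0:=f_*\sO_X(mD)$. These are torsion-free coherent sheaves on the integral scheme $S$; they are subsheaves of the torsion-free sheaf $f_*\sO_X(mD)$, since $X$ is integral and $f$ is surjective. For $l$ in range, the inclusion $\mathcal{F}_l\to\mathcal{F}_0$ restricts over $S^0$ to the inclusion defining the quantity for $X^0/S^0$. Hence an isomorphism over $S$ gives one over $S^0$, and the maximal $l$ over $S^0$ is at least the one over $S$.

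For the converse, suppose $\mathcal{F}_l\to\mathcal{F}_0$ is an isomorphism over $S^0$. A section $s$ of $\sO_X(mD)$ over $f^{-1}(U)$, with $U\subset S$ open, defines a rational section. Its order along $\Gamma$ is computed at the generic point of $\Gamma$, which lies over $f(\Gamma)$. If $U\cap f(\Gamma)\neq\emptyset$, then restricting to $U\cap S^0$ shows $\ord_\Gamma(s)\geq l$; here $U\cap S^0\neq\emptyset$ because $S$ is irreducible. If $U\cap f(\Gamma)=\emptyset$, then $\Gamma\cap f^{-1}(U)=\emptyset$ and there is no condition. In both cases $s$ is a section of $\sO_X(mD-l\Gamma)$, so the two maxima coincide for every $m$. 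The $+\infty$ conventions match for the same reason.

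\textbf{Part (1).} Any $D'\geq0$ with $mD\sim_S D'$ restricts to $D'|_{X^0}\geq0$ with $mD^0\sim_{S^0}D'|_{X^0}$ and the same $\ord_\Gamma$. This gives $\ord_{\Gamma^0}(X^0/S^0,\|D^0\|)\leq\ord_\Gamma(X/S,\|D\|)$.

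The reverse inequality is the main obstacle, because a relatively effective representative over $S^0$ need not extend to one over $S$. Suppose $mD^0+f^{0*}L^0=D'^0\geq0$ with $L^0$ a Cartier divisor on $S^0$. Extend $L^0$ to a Weil divisor on $S$. If $S$ is not locally factorial, I would instead work on small affine opens, using that $\sim_S$ is local over $S$, or compare via part (2) and Remark~\ref{ordrem}.

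The approach I would take is to reduce to $S$ affine and then to a single affine open meeting $f(\Gamma)$. On affine $S=\Spec R$ with $S^0\supset D(g)$ for some $g$ not vanishing identically on $f(\Gamma)$, a section over $X^0$ becomes a section over $X$ after multiplying by a power $g^N$. Since $\ord_\Gamma(f^*g)=0$ (as $g\notin$ the ideal of $f(\Gamma)$ at the generic point), this does not change $\ord_\Gamma$. This yields an effective $D'\sim mD$ on $X$ with the same order along $\Gamma$.

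For general $S$, cover $S$ by affine opens. By irreducibility every nonempty affine open meets $S^0$, and I would pick one, $U$, meeting $f(\Gamma)\cap S^0$. Comparing $S$ with $U$ and $S^0$ with $U\cap S^0$ then closes the chain of inequalities. The step needing the most care is showing that relative linear equivalence over $S$ can be tested over one affine $U$ meeting $f(\Gamma)$. I would handle it by showing first that $\ord_\Gamma(X/S,\|D\|)\geq\ord_\Gamma(X_U/U,\|D_U\|)$, which follows from restriction as above. The reverse then comes from the equality $\ord_\Gamma(X/S,\|D\|)=\ord_\Gamma(\|D\|,X/S)$, if Lemma~\ref{asyord} is available; otherwise from gluing local sections via part (2).
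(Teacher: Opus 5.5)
Your part (2) is correct. It is a section-level version of the paper's argument. The paper uses the exact sequence $0\to\sO_X(mD-(i+1)\Gamma)\to\sO_X(mD-i\Gamma)\to\mathcal{F}\to 0$ and the torsion-freeness of $f_*\mathcal{F}$ over $f(\Gamma)$. You instead test membership in $\sO_X(mD-l\Gamma)$ at the generic point of $\Gamma$. Its image, the generic point of $f(\Gamma)$, lies in $U\cap S^0$ because the closed set $f(\Gamma)$ is irreducible, not because $S$ is, as you wrote.

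The restriction inequality in (1) is fine. Your $g^N$ argument for affine $S$ also works, provided you first shrink to a principal open $D(g)$ that contains the generic point of $f(\Gamma)$ and on which the twisting Cartier divisor $L^0$ is principal. Extending $L^0$ to a Weil divisor does not help, since $\sim_S$ only allows pullbacks of Cartier divisors.

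The genuine gap is the reverse inequality in (1) when $S$ is not affine. There you need $\ord_\Gamma(X/S,\|D\|)\le\ord_\Gamma(X_U/U,\|D_U\|)$ for an affine open $U\subset S$. That is exactly the hard direction of (1) for the pair $U\subset S$, so the reduction presupposes the statement being proved. Invoking Lemma~\ref{asyord} is circular, because the paper proves that lemma by first reducing to affine $S$ via the present one. ``Gluing local sections via part (2)'' does not work either. Part (2) only controls the sheaves $f_*\sO_X(mD-l\Gamma)$. By contrast, $mD\sim_S D'$ asks for a single rational function $\phi$ and a single Cartier divisor $L$ on $S$ with $mD+f^*L+\divi(\phi)\ge 0$ on all of $X$, and that cannot be assembled from local sections over different opens.

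The missing idea, which is how the paper argues, is to extend by closure and absorb the error with a vertical divisor.
\begin{itemize}
\item Start from $mD^0+f^{0*}E^0\sim D'\ge 0$ on $X^0$. Choose $S^1\subset S^0$ meeting $f(\Gamma)$ such that $E^0|_{S^1}=\divi(h)|_{S^1}$ is principal and $S\setminus S^1$ has pure codimension one.
\item Set $E:=\divi(h)$ on all of $S$, and let $\overline{D'}$ be the closure of $D'$ in $X$. Then $mD+f^*E\sim\overline{D'}+F$ for a possibly non-effective Weil divisor $F$ supported on $X\setminus f^{-1}(S^1)$.
\item Add $f^*E'$ for a sufficiently large effective Cartier divisor $E'$ on $S$ with $f(F)\subset\Supp E'\subset S\setminus S^1$. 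This gives $mD\sim_S\overline{D'}+F+f^*E'\ge 0$.
\item The order along $\Gamma$ is the same as for $D'$, because the generic point of $\Gamma$ lies over $S^1$.
\end{itemize}
When $S$ is affine, with $S^1=D(g)$ and $E'=N\,\divi(g)$, this is precisely your $g^N$ trick. The point is to run it directly on $X$ rather than to try to reduce to an affine base.
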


\begin{proof}
First, we prove (1).
Fix any $m\in\Z_{>0}$ such that $mD$ is Cartier. Take an effective Cartier divisor $D'$ on $X^0$ such that $mD^{0}\sim_{S^0} D'$ and 
\[ 
\ord_{\Gamma^{0}}D'=
\min \left\{ \ord_{\Gamma^{0}}D'' \middle| mD^{0} \sim_{S^0} D'' \geq 0 \right\}.
\] 
There exists a Cartier divisor $E^0$ on $S^0$ such that $mD^{0}+ (f^0)^{*} E^0 \sim D'\geq 0$.
There exists an open subset $S^1 \subset S^0$ such that $E^0 |_{S^1}$ is principal, $S^1 \cap f(\Gamma)\neq \emptyset$, and $S\setminus S^1$ is pure of codimension one.
Let $E$ be a principal divisor on $S$ defined by $E^0 |_{S^1}$, and let $\overline{D'}$ be the closure of $D'$ in $X$. Then 
\[ mD+f^{*}E \sim \overline{D'}+F \]
for some Weil divisor $F$ supported on $X\setminus f^{-1}(S^1)$.
Note that $F$ may not be effective.
Let $E'\geq 0$ be any sufficiently large Cartier divisor on $S$ such that $f(F)\subset \Supp E' \subset S\setminus S^1$.
Then 
\[ mD+f^{*}(E+E')\sim\overline{D'}+(F+f^{*}E')\]
and $F+f^{*}E'\geq 0$ is supported on $X\setminus X^1$.
Thus, \[ mD\sim_{S}\overline{D'}+(F+f^{*}E')\geq 0\] and
\begin{equation*}
\begin{aligned}
\min \left\{ \ord_{{\Gamma}^{0}}D'' \middle| mD \sim_{S^0} D'' \geq 0 \right\}
=\ord_{\Gamma^{0}}D' 
&=\ord_{\Gamma}(\overline{D'}+(F+f^{*}E')) \\
& \geq
\min \left\{ \ord_{\Gamma}D'' \middle| mD \sim_{S} D'' \geq 0 \right\}.
\end{aligned}
\end{equation*}
The reverse inequality follows by definition.

Next, we prove (2). When $X$ is regular, this was proved by \cite[Chapter~3, 4.1.~Lemma]{Nak}. We follow similar arguments.
For any $i\in \Z_{\geq 0}$,
we consider the exact sequence
\[0 \rightarrow \sO_{X}(mD-(i+1)\Gamma) \rightarrow \sO_{X}(mD-i\Gamma) \rightarrow \mathcal{F} \rightarrow 0. \]
By \cite[Lemma 2.60(3)]{Kol}, $\mathcal{F}$ is a torsion-free $\sO_{\Gamma}$-module, since the first and the second term are reflexive. By taking pushforward, we obtain the exact sequence
\[0 \rightarrow f_{*}\sO_{X}(mD-(i+1)\Gamma) \rightarrow f_{*}\sO_{X}(mD-i\Gamma) \rightarrow f_{*}\mathcal{F}. \]
Since $f_{*}\mathcal{F}$ is a torsion-free $\sO_{f(\Gamma)}$-module,
 $f_{*}\sO_{X}(mD-(i+1)\Gamma) \xrightarrow{\sim} f_{*}\sO_{X}(mD-i\Gamma)$ if and only if $f_{*}\sO_{X}(mD-(i+1)\Gamma) \rightarrow f_{*}\sO_{X}(mD-i\Gamma)$ is isomorphic on $S^0$.
\end{proof}

\begin{lem}\label{asyord}
With notation as above,
$\ord_{\Gamma}(X/S,\|D\|) = \ord_{\Gamma}(\|D\|,X/S)$.
\end{lem}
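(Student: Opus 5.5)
The plan is to reduce to the case where $S$ is affine, and then identify both quantities as infima over the same set of effective divisors.

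By Lemma~\ref{ordaffine}, both $\ord_{\Gamma}(X/S,\|D\|)$ and $\ord_{\Gamma}(\|D\|,X/S)$ are unchanged when $S$ is replaced by an open subset $S^0$ with $f(\Gamma)\cap S^0\neq\emptyset$. So we may take $S=\Spec R$ affine, and after shrinking further, local if convenient.

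\textbf{Step 1: both sides are infinite at the same time.} First check that the set $\{D'\mid mD\sim_S D'\geq 0\}$ is nonempty exactly when $f_*\sO_X(mD)\neq 0$. Indeed, $mD+f^*E\sim D'\geq 0$, and on affine $S$ we may shrink so that $E$ is principal. Moreover, if $f_*\sO_X(mD)=0$, then $f_*\sO_X(mD-l\Gamma)=0$ for all $l$, so the maximum in the second definition does not exist and the value is $+\infty$. Thus both sides are $+\infty$ unless $D$ is $f$-$\Q$-effective. Note that the infima may be taken over $m$ with $f_*\sO_X(mD)\neq0$, since the other $m$ contribute $+\infty$ to both. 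So from now on we may assume $D$ is $f$-$\Q$-effective.

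\textbf{Step 2: compare term by term for each such $m$.} On affine $S$, Remark~\ref{ordrem} gives the second quantity as the infimum over $m$ of $\frac{1}{m}\min\{\ord_\Gamma D'\mid mD\sim D'\geq0\}$. I would verify the remark directly. The maximal $l$ with $H^0(X,mD-l\Gamma)=H^0(X,mD)$ equals $\min_{0\neq s\in H^0(mD)}\ord_\Gamma(\divi(s)+mD)$. Here reflexivity of $\sO_X(mD-l\Gamma)$ means a section lies in it iff its divisor vanishes to order $\geq l$ along $\Gamma$. The maximum exists because $H^0(mD)\neq 0$.

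\textbf{Step 3: show the $\sim$-infimum equals the $\sim_S$-infimum.} Since $\sim$ implies $\sim_S$, we get $\ord_\Gamma(X/S,\|D\|)\leq\ord_\Gamma(\|D\|,X/S)$. For the converse, suppose $mD+f^*E\sim D'\geq0$ with $E$ a Cartier divisor on $S$. Shrinking $S$ around the generic point of $f(\Gamma)$, which Lemma~\ref{ordaffine} permits, we may assume $E$ is principal. This is possible because Cartier divisors are locally principal, so we can pass to a neighbourhood of a point of $f(\Gamma)$. Then $mD\sim D'$, and $\ord_\Gamma$ is unchanged since the restriction of $\Gamma$ is still a prime divisor.

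The main obstacle is that the shrinking in Step 3 depends on $D'$, and hence on $m$. This can be handled by applying Lemma~\ref{ordaffine} separately for each $m$, comparing the $m$-th terms on a common open set, or more simply by localizing $S$ at the generic point of $f(\Gamma)$. Over a local base, every Cartier divisor on $S$ is principal, so $\sim_S$ and $\sim$ coincide. Lemma~\ref{ordaffine} extends to this localization by taking the limit over opens, because the relevant minima stabilize for each fixed $m$.
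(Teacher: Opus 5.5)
Your proposal is correct and follows essentially the same route as the paper. You reduce to affine $S$, use Remark~\ref{ordrem} to express the $m$-th term of $\ord_{\Gamma}(\|D\|,X/S)$ via linear equivalence, and for each fixed $m$ shrink $S$ around the generic point of $f(\Gamma)$ so that the Cartier divisor on $S$ becomes principal, relying on the fact that this $m$-th term does not change under such shrinking. The only thing to keep in mind is that this per-$m$ invariance comes from the proof of Lemma~\ref{ordaffine}(2) (torsion-freeness of $f_{*}\mathcal{F}$ over $\sO_{f(\Gamma)}$), not from its statement; that proof is exactly what the paper invokes.
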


\begin{proof}
By Lemma~\ref{ordaffine}, we may assume that $S$ is affine. 
By Remark~\ref{ordrem}, it is enough to show that for every $m\in \Z_{>0}$ such that $mD$ is Cartier,
\[ \min \{ \ord_{\Gamma}D' \mid mD \sim_{S} D' \geq 0 \} \geq \min \{ \ord_{\Gamma}D' \mid mD \sim D' \geq 0 \}. \]
Take an effective Cartier divisor $E$ on $X$ attaining the minimum of the left side.
Since shrinking $S$ does not change the right side by the proof of Lemma~\ref{ordaffine}(2),
\begin{equation*}
\ord_{\Gamma}E \geq \min \left\{ \ord_{\Gamma}D' \middle| mD \sim D' \geq 0 \right\}. \qedhere   
\end{equation*}
\end{proof}

Next, we define relative asymptotic vanishing orders along divisorial valuations.

\begin{defn}[{\cite[Definition~3.6]{LX}}]\label{asyvan}
Let $f: X \rightarrow S$ be a projective surjective morphism of integral excellent schemes with $X$ normal. Let $D$ be a $\Q$-Cartier $\Q$-divisor on $X$, and let $v$ be a divisorial valuation over $X$.
Take a projective birational morphism $\mu :Y \rightarrow X$ from a normal integral scheme, a prime divisor $\Gamma$ on $Y$, and $\lambda\in\R_{>0}$ such that $v=\lambda\cdot\ord_{\Gamma}$ (Remark~\ref{excval}).
We define the asymptotic vanishing order $v(\| D/S \|)$ of $D$ along $v$ as
\[ v(\| D/S \|) := \lambda\cdot\ord_{\Gamma}(Y/S,\|\mu^{*}D\|) = \lambda\cdot\ord_{\Gamma}(\|\mu^{*}D\|,Y/S). \]
This definition is independent of the choice of $Y$ and $\Gamma$ by Lemma~\ref{ordaffine} and Remark~\ref{ordrem}.
When $S$ is affine, we write $v(\| D/S \|)$ by $v(\| D\|)$ since it is independent of the choice of $S$ by Remark~\ref{ordrem}.
\end{defn}

\begin{lem}\label{bcasyvan}
With notation as above, let $g:S' \rightarrow S$ be either an open immersion satisfying $c_{X}(v)\in S'$ or a localization $S'=\Spec \sO_{S,\xi} \rightarrow S$ at a point $\xi \in \overline{c_{X}(v)}$.
Then
\[ v(\| D/S \|)= v(\| {g'}^{*}D/ S' \|), \]
where $g':X \times_{S} S' \rightarrow X$ is the natural morphism.
\end{lem}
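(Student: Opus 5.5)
We reduce the statement to Lemma~\ref{ordaffine} and Remark~\ref{ordrem} by base changing a single model $\mu:Y\to X$ computing $v$. Write $X':=X\times_S S'$, $f':X'\to S'$, and fix $\mu:Y\to X$ projective birational with $Y$ normal, a prime divisor $\Gamma\subset Y$, and $\lambda>0$ with $v=\lambda\cdot\ord_\Gamma$ (Remark~\ref{excval}). Set $Y':=Y\times_S S'$ and $\mu':Y'\to X'$. In both cases $g$ is flat with geometrically regular (indeed trivial) fibers: it is an open immersion or a localization. Hence $Y'$ and $X'$ are normal and integral, and $\mu'$ is projective birational. The key observation is that $\Gamma':=\Gamma\times_S S'$ is nonempty, hence a prime divisor on $Y'$, and $v$ restricted to $K(X')=K(X)$ equals $\lambda\cdot\ord_{\Gamma'}$.

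Nonemptiness holds because the generic point of $\Gamma$ maps to $c_X(v)$ under $\mu$. Its image in $S$ is $f(c_X(v))$. In the open immersion case this lies in $S'$ by hypothesis, reading $c_X(v)\in S'$ as $c_X(v)\in f^{-1}(S')$. In the localization case, $\xi\in\overline{f(c_X(v))}$ means $f(c_X(v))$ is a generization of $\xi$, hence lies in $\Spec\sO_{S,\xi}$. Consequently the pullback ${g'}^*D$ satisfies $\mu'^*({g'}^*D)=(\mu^*D)|_{Y'}$, and by Definition~\ref{asyvan} we get
\[ v(\|{g'}^*D/S'\|)=\lambda\cdot\ord_{\Gamma'}(Y'/S',\|\mu^*D|_{Y'}\|). \]

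It therefore suffices to show
\[ \ord_{\Gamma'}(\|\mu^*D|_{Y'}\|,Y'/S')=\ord_{\Gamma}(\|\mu^*D\|,Y/S). \]
For an open immersion this is exactly Lemma~\ref{ordaffine}(2) applied to $Y\to S$ with $S^0=S'$, since $(f\circ\mu)(\Gamma)\cap S'\neq\emptyset$. For the localization case, first replace $S$ by an affine open neighborhood of $\xi$ using Lemma~\ref{ordaffine}. Then use the second description in the definition: for each $m$ and $l$ we must check that $h_*\sO_Y(m\mu^*D-l\Gamma)\to h_*\sO_Y(m\mu^*D)$ is an isomorphism iff its localization at $\xi$ is, where $h=f\circ\mu$. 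Pushforward commutes with flat base change, and $\sO_Y(m\mu^*D-l\Gamma)$ restricts to the corresponding sheaf on $Y'$ by reflexivity and flatness, so the base-changed map is the localization at $\xi$. The argument in the proof of Lemma~\ref{ordaffine}(2) shows the cokernel-type obstruction at each step $i\to i+1$ embeds into $h_*\mathcal F$. This sheaf is torsion-free on the integral scheme $h(\Gamma)$, and $\xi\in h(\Gamma)$, so an isomorphism at the generic point of $h(\Gamma)$, hence after localization at $\xi$, forces an isomorphism globally. The converse direction is trivial. Taking the maximum over $l$ and the infimum over $m$ gives the equality; the $+\infty$ conventions match by the same equivalences.

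The main obstacle is this localization step. Specifically, we must verify that the torsion-freeness argument of Lemma~\ref{ordaffine}(2) applies to a non-open flat base change. We must also check that $\Gamma'$ is still a prime divisor with $\ord_{\Gamma'}$ restricting to $\ord_\Gamma$. The latter is clear because $\sO_{Y,\Gamma}=\sO_{Y',\Gamma'}$ under localization.
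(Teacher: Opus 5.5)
Your proposal is correct and follows the paper's proof. The open-immersion case is Lemma~\ref{ordaffine}(2) applied to $f\circ\mu:Y\to S$. In the localization case you use the description via $(f\circ\mu)_*\sO_Y(m\mu^*D-l\Gamma)\to(f\circ\mu)_*\sO_Y(m\mu^*D)$, together with the torsion-freeness argument from the proof of Lemma~\ref{ordaffine}(2), to check the isomorphism at $\xi$. You also spell out something the paper leaves implicit: the base-changed model $\mu':Y'\to X'$, with prime divisor $\Gamma'$, still computes $v$.
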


\begin{proof}
When $g$ is an open immersion, this follows from \ref{ordaffine}.
Suppose that $g$ is a localization.
With notation as in Definition~\ref{asyvan},
\begin{align*}
v(\| D/S \|) &= \ord_{\Gamma}(\|\mu^{*}D\|,Y/S) \\
&= \inf_{m\in\Z_{>0}} \dfrac{1}{m} \max \left\{ l \in \Z_{\geq 0} \middle| f_{*}\sO_{Y}(m\mu^{*}D-l\Gamma) \xrightarrow{\sim} f_{*}\sO_{Y}(m\mu^{*}D) \right\}.
\end{align*}
The statement follows since the isomorphism in the right side can be checked at $\xi$ by the proof of Lemma~\ref{ordaffine}(2).
\end{proof}

Next, we define relative numerical vanishing orders of big $\Q$-Cartier $\Q$-divisors.

\begin{defn}
With notation as above, let $D$ be an $f$-big $\Q$-Cartier $\Q$-divisor on $X$.
We define the numerical vanishing order of $D$ along $v$ as
\begin{equation*}
v_{\num}(D/S)=
\inf_{S^0 \subset S} 
\inf \left\{ v(D') \middle|
\begin{gathered}
 \text{$D'$ is a $\Q$-Cartier $\Q$-divisor on $f^{-1}(S^0)$} \\
 \text{such that $D|_{f^{-1}(S^0)} \equiv_{S^0} D' \geq 0$}
\end{gathered} 
\right\}, 
\end{equation*}
where $S^0$ runs through all open neighborhoods of $c_{X}(v)$ in $S$.
By definition, $v_{\num}(D/S)$ only depends on the $f$-numerical equivalence class of $D$.
\end{defn}

Although the infimum is taken over $S^0$ in the definition above, we show in the following proposition that $v_{\num}(D/S)$ can be computed by any $S^0$.

\begin{prop}[{cf.~\cite[Chapter~3, 1.4. Lemma~(2)]{Nak}}]
With notation as above, 
\[ v(\| D/S \|)=v_{\num}(D/S), \]
and for any affine open neighborhood $S^0$ of $c_{X}(v)$ in $S$,
\begin{equation*}
v_{\num}(D/S)= \inf \left\{ v(D') \middle|
\begin{gathered}
 \text{$D'$ is a $\Q$-Cartier $\Q$-divisor on $f^{-1}(S^0)$} \\
 \text{such that $D|_{f^{-1}(S^0)} \equiv_{S^0} D' \geq 0$}
\end{gathered} 
\right\}. 
\end{equation*}
\end{prop}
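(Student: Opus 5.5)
Fix an affine open neighborhood $S^0$ of $c_X(v)$ in $S$ and let $I(S^0)$ denote the infimum on the right-hand side for this $S^0$. The plan is to prove the chain
\[ v(\|D/S\|) \geq v_{\num}(D/S) \leq I(S^0) \quad\text{and}\quad I(S^0)\geq v(\|D/S\|), \]
together with $v_{\num}(D/S)\geq v(\|D/S\|)$. By definition $v_{\num}(D/S)=\inf_{S^0} I(S^0)$, so these give $v_{\num}(D/S)=v(\|D/S\|)=I(S^0)$ for every affine $S^0$.

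\textbf{Easy direction.} By Lemma~\ref{bcasyvan}, $v(\|D/S\|)=v(\|D|_{f^{-1}(S^0)}\|)$ for any open $S^0\ni c_X(v)$. For affine $S^0$, Remark~\ref{ordrem} expresses this as an infimum of $v(D')/m$ over effective $D'\sim mD$ on $f^{-1}(S^0)$. Linear equivalence implies numerical equivalence over $S^0$, so $v(\|D/S\|)\geq I(S^0)\geq v_{\num}(D/S)$. Restricting to smaller affine opens shows the infimum in the definition of $v_{\num}$ can be taken over affine $S^0$.

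\textbf{Main direction.} We must show $v(D')\geq v(\|D/S\|)$ for every open $S^0\ni c_X(v)$ and every effective $\Q$-Cartier $D'\equiv_{S^0}D$. Shrink $S^0$ to be affine; this is harmless by Lemma~\ref{bcasyvan}, and restriction preserves both $D'\geq0$ and numerical equivalence. Write $N:=D|_{f^{-1}(S^0)}-D'$, which is $f$-numerically trivial, and fix an $f$-ample Cartier divisor $A$ on $X^0:=f^{-1}(S^0)$. The key point is that $D'+\epsilon A+N$ is $\Q$-linearly equivalent to an effective divisor whose $v$-value is at most $v(D')+\epsilon\, v(\text{a general member of }|kA|)/k$. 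To see this, note that $\epsilon A+N$ is $f$-ample because $N\equiv_{S^0}0$ and ampleness is numerical (Keeler). Hence $m(\epsilon A+N)$ is globally generated over the affine base for divisible $m$, so we may choose a member $G$ not containing $c_{X}(v)$ (or a center on a model $Y$), giving $v(G)=0$. Then
\[ v(\|D+\epsilon A\|)\leq v(D')+v(G)/m=v(D'). \]
Here we pull back to $\mu:Y\to X$ where $v=\lambda\ord_\Gamma$; global generation on $X$ pulls back, and $\mu^*G$ avoids $\Gamma$ when $G$ avoids $c_X(v)$.

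\textbf{Limit step.} It remains to show $v(\|D+\epsilon A\|)\to v(\|D\|)$ as $\epsilon\to0$ for $f$-big $D$. This is the expected main obstacle, namely continuity of the asymptotic order on the big cone in the relative setting. First, $v(\|D+\epsilon A\|)\leq v(\|D\|)$, since sections of $mD$ times sections of $m\epsilon A$ avoiding the center give the bound by subadditivity. Second, because $D$ is $f$-big, one can write $D\sim_{\Q}\delta A+E$ with $E\geq0$ over the affine base (relative Kodaira lemma via Ushiro). Then
\[ (1+\epsilon/\delta)D\sim_{\Q}D+\epsilon A+(\epsilon/\delta)E, \]
so $(1+\epsilon/\delta)\,v(\|D\|)\leq v(\|D+\epsilon A\|)+(\epsilon/\delta)\,v(E)$. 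Letting $\epsilon\to0$ gives $v(\|D\|)\leq\liminf_{\epsilon\to0} v(\|D+\epsilon A\|)\leq v(D')$, which completes the argument.
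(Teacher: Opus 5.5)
Your proposal is correct and follows essentially the same route as the paper (Nakayama's argument): reduce to an affine base, bound $v(\|D+\epsilon A\|)\le v(D')$ because $D+\epsilon A-D'$ is $f$-ample, pass to the limit $\epsilon\to 0$ using a decomposition $D\sim_{\Q}\delta A+E$, and get the reverse inequality from the definitions. The chain of inequalities in your opening paragraph is stated confusingly, but the body proves both $v(\|D/S\|)\ge I(S^0)$ and $v(D')\ge v(\|D/S\|)$ for every admissible $D'$, which is all that is needed.
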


\begin{proof}
By Lemma~\ref{ordaffine}, it is enough to show $v(\| D/S \|) = \inf \left\{ v(D') \mid D \equiv_{S} D' \geq 0 \right\}$ when $S$ is affine.
We follow the same arguments as those in \cite[Chapter~3, 1.4.~Lemma~(2)]{Nak}.

Take any ample Cartier divisor $A$ on $X$. We first claim that
\[ v(\| D \|) = \lim_{\Q_{>0} \ni \varepsilon \rightarrow +0} v(\| D+\varepsilon A \|). \]
Since $S$ is affine and $D$ is big, there are $\delta \in \Q_{>0}$ and an effective $\Q$-Cartier $\Q$-divisor $\Delta$ on $X$ such that $D \sim_{\Q} \delta A+\Delta$.
Then for any $\varepsilon \in \Q_{>0}$,
\[(1+\varepsilon)v(\| D+\varepsilon A \|) \leq (1+\varepsilon)v(\| D \|) \leq v(\| D+\varepsilon\delta A \|)+\varepsilon v(\Delta).\]
By taking the limit $\varepsilon \rightarrow +0$, the claim is obtained.

For any effective $\Q$-Cartier $\Q$-divisor $D'$ on $X$ such that $D' \equiv_{S} D$,
\[ v(\| D+\varepsilon A \|) \leq v(\| D+\varepsilon A-D' \|)+ v(D')=v(D') \]
since $D+\varepsilon A-D'$ is ample.
Thus, by taking the limit $\varepsilon \rightarrow +0$, we obtain $v(\|D\|) \leq v(D')$. Therefore, 
\[ v(\|D\|) \leq \inf \left\{ v(D') \mid D \equiv_{S} D' \geq 0 \right\}.\]
The reverse inequality follows by definition.
\end{proof}

\begin{rem}
When $X$ is regular and $S$ is the spectrum of an algebraically closed field or a complete Noetherian local domain of residue characteristic $p>0$, this proposition was proved via asymptotic multiplier and test ideals in \cite[Corollary~2.7]{ELMNP}, \cite[Theorem~6.1]{Mus}, and \cite[Theorem~7.6]{HLS}.
\end{rem}

In the following proposition, let $\Bigcone(X/S)_{\Q}$ (resp.~$\Bigcone(X/S)_{\R}$) denote the cone of $f$-numerical equivalence classes of $f$-big $\Q$-Cartier $\Q$-divisors (resp.~$f$-big $\R$-Cartier $\R$-divisors).

\begin{prop}[{cf.~\cite[Theorem A]{ELMNP}}]\label{conti}
With notation as above, let $S^0$ be an affine open neighborhood of $c_{X}(v)$ in $S$.
The function
\[ v(\|-/S\|)=v_{\num}(-/S): \Bigcone(X/S)_{\Q} \rightarrow \R_{\geq 0} \]
uniquely extends to the continuous function
\[ \Bigcone(X/S)_{\R} \rightarrow \R_{\geq 0}, \]
mapping an $f$-big $\R$-Cartier $\R$-divisor $D$ on $X$ to
\begin{equation*}
\inf \left\{ v(D') \middle|
\begin{gathered}
 \text{$D'$ is a $\R$-Cartier $\R$-divisor on $f^{-1}(S^0)$} \\
 \text{such that $D|_{f^{-1}(S^0)} \equiv_{S^0} D' \geq 0$.}
\end{gathered} 
\right\} 
\end{equation*}
\end{prop}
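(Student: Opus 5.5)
We may take $S=S^0$ affine by the previous proposition. Write $\sigma(D)$ for the $\R$-version of the infimum in the statement, defined for every $f$-big $\R$-Cartier $\R$-divisor. The plan is to show that $\sigma$ agrees with $v_{\num}(-/S)$ on $\Bigcone(X/S)_{\Q}$, is homogeneous and convex, and then to deduce continuity from convexity on an open cone.

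\textbf{Properties of $\sigma$.} Numerical invariance is immediate from the definition. Homogeneity $\sigma(cD)=c\,\sigma(D)$ for $c>0$ holds because the effective representatives scale. Convexity $\sigma(D_1+D_2)\le\sigma(D_1)+\sigma(D_2)$ holds because sums of effective representatives are effective representatives of the sum.

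\textbf{Agreement on $\Q$-classes.} For a $\Q$-Cartier $\Q$-divisor $D$ we need $\sigma(D)=v_{\num}(D/S)$. The inequality $\sigma\le v_{\num}$ is clear. For the reverse, take an effective $\R$-Cartier $D'\equiv_S D$, and fix an ample Cartier $A$ and $\epsilon\in\Q_{>0}$. Perturbing the coefficients of $D'$ (i.e. approximating its components in the finite-dimensional space spanned by the Cartier divisors involved), we can find an effective $\Q$-Cartier $D''$ with $v(D'')\le v(D')+\epsilon$ and $D+\epsilon A-D''$ $f$-ample. Then the argument of the previous proof gives
\[ v(\|D+\epsilon A\|)\le v(D'')\le v(D')+\epsilon. \]
Letting $\epsilon\to 0$ and using the limit claim from the previous proof yields $v_{\num}(D)\le\sigma(D)$. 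The same argument shows that on $\Q$-points $v_{\num}$ is convex, homogeneous, and nonincreasing in ample directions.

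\textbf{Continuity.} $\Bigcone(X/S)_{\R}$ is an open convex cone in the finite-dimensional space $N^1(X/S)_{\R}$. Finiteness comes from Keeler's result; openness holds because bigness is checked on the generic fiber and equals ample plus effective. On this cone $\sigma$ is finite and nonnegative, since a big class is $\R$-linearly equivalent to an ample plus an effective divisor, and an ample $\R$-class is a positive combination of ample Cartier classes, hence has effective representatives. A finite convex function on an open convex subset of $\R^n$ is continuous, so $\sigma$ is continuous. Since $\Bigcone(X/S)_{\Q}$ is dense in $\Bigcone(X/S)_{\R}$, the extension is unique.

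The main obstacle is the rational approximation in the agreement step: one needs an effective $\Q$-divisor close to $D'$ in numerical class with controlled $v$. This can be arranged by writing $D'=\sum a_iE_i$ with $E_i$ effective Weil divisors whose relevant combinations are $\Q$-Cartier. Alternatively, the step can be sidestepped by showing directly that $v_{\num}$ restricted to $\Q$-classes is locally Lipschitz, as in \cite[Theorem A]{ELMNP}, extending it continuously, and then checking that the extension equals $\sigma$ through the two inequalities and continuity of $\sigma$.
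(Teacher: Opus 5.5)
Your proof is correct, and its core is the same as the paper's. The paper just points to the argument of \cite[Theorem~A and Lemma~3.3]{ELMNP}, which applies because $N^1(X/S)_\R$ is finite-dimensional. That argument starts from $v(\|\cdot\|)$ on rational big classes and extends it continuously to the open cone, using subadditivity and homogeneity.

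You reverse the order, in the spirit of Nakayama's $\sigma_\Gamma$. You define the real infimum $\sigma$ first and apply the classical continuity of finite convex functions on open convex sets. This builds the formula in the statement in from the start, but you then have to prove $\sigma=v_{\num}$ on rational classes. The extension-from-$\Q$ route gets continuity from properties that follow immediately from the definition of $v(\|\cdot\|)$. It must then identify the extension with the $\R$-infimum, which again requires comparing real and rational effective representatives.

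That comparison is the one step you should state precisely. Perturbing the coefficients $r_i$ in $D'=\sum r_iC_i$, with $C_i$ Cartier, can destroy effectivity. Instead, write $D'=\sum_j a_j\Gamma_j$ with all $a_j>0$. The $\R$-Cartier divisors supported on $\Gamma_1,\dots,\Gamma_k$ form the subspace $(M\cap\bigoplus_j\Z\Gamma_j)\otimes\R$, where $M$ is the group of Cartier divisors; this uses exactness of $-\otimes_{\Z}\R$, and the subspace is defined over $\Q$. So its $\Q$-Cartier points with all coefficients positive are dense near $D'$. Both $v$ and the class map to $N^1(X/S)_\R$ are linear on this finite-dimensional space. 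This produces the $D''$ your argument needs, with $v(D'')\le v(D')+\epsilon$ and $D+\epsilon A-D''$ $f$-ample.
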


\begin{proof}
Since $N^1(X/S)_{\R}$ is a finite-dimensional $\R$-vector space by \cite[Theorem~3.6]{Keeler}, the statement can be proved by the same arguments as those in the proofs of \cite[Theorem~A and Lemma~3.3]{ELMNP}.
\end{proof}

We now define relative numerical vanishing orders and relative non-nef loci for pseudo-effective $\R$-Cartier $\R$-divisors.

\begin{defn}\label{numvan}
With notation as above, the function $\Bigcone(X/S)_{\R} \rightarrow \R_{\geq 0}$ defined in Proposition~\ref{conti} is also denoted by $v_{\num}(-/S)$.
For an $f$-big $\R$-Cartier $\R$-divisor $D$ on $X$, we call $v_{\num}(D/S)$ the numerical vanishing order of $D$ along $v$.
For an $f$-pseudo-effective $\R$-Cartier $\R$-divisor $D$ on $X$,
we define the numerical vanishing order $v_{\num}(D/S)$ of $D$ along $v$ as
\[ v_{\num}(D/S) := \sup_{A} v_{\num}(D+A/S)
= \lim_{i \rightarrow \infty } v_{\num}(D+ A_i /S) \in \R_{\geq 0}\cup\{ +\infty \},
\]
where $A$ runs through all $f$-ample $\R$-Cartier $\R$-divisors on $X$ and $(A_i)_{i\in \Z_{>0}}$ is an arbitrary sequence of $f$-ample $\R$-Cartier $\R$-divisors on $X$ satisfying $|A_i| \xrightarrow{i \rightarrow \infty} 0$.
\end{defn}

\begin{defn}
Let $f: X \rightarrow S$ be a projective surjective morphism of integral excellent schemes with $X$ normal. Let $D$ be an $\R$-Cartier $\R$-divisor on $X$.
If $D$ is $f$-pseudo-effective, we define the non-nef locus $\NNef(D/S)$ of $D$ over $S$ as
\[ \NNef(D/S) := \bigcup_{v} \overline{c_{X}(v)}, \]
where $v$ runs through all divisorial valuations over $X$ satisfying $v_{\num}(D/S)>0$.
If $D$ is not $f$-pseudo-effective, we set $\NNef(D/S) := X$.
\end{defn}

Similarly to Definition~\ref{asyvan}, when $S$ is affine, we write $v_{\num}(D/S)$ and $\NNef(D/S)$ by $v_{\num}(D)$ and $\NNef(D)$, respectively.

Lastly, we investigate some properties of relative non-nef loci.

\begin{prop}\label{nneffact}
With notation as above, for any sequence of $f$-ample $\R$-Cartier $\R$-divisors $(A_i)_{i\in \Z_{>0}}$ satisfying $|A_i| \xrightarrow{i \rightarrow \infty} 0$,
\[ \NNef(D/S) = \bigcup_{i\in \Z_{>0}} \NNef(D+A_i/S). \]
\end{prop}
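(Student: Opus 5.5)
The plan is to reduce the statement to the behaviour of the numbers $v_{\num}(D+A_i/S)$ for a single divisorial valuation $v$, and to split into two cases according to whether $D$ is $f$-pseudo-effective.

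\emph{Case 1: $D$ is not $f$-pseudo-effective.} Then $\NNef(D/S)=X$ by definition. The $f$-pseudo-effective classes form a closed cone in $N^{1}(X/S)_{\R}$: by the equivalent characterizations of $f$-pseudo-effectivity, this cone is the closure of the cone of $f$-$\Q$-effective classes. Since $|A_i|\to 0$, the class of $D+A_i$ lies outside this closed cone for $i\gg 0$. Hence $\NNef(D+A_i/S)=X$ for such $i$, and both sides equal $X$.

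\emph{Case 2: $D$ is $f$-pseudo-effective.} Then each $D+A_i$ is $f$-big, hence $f$-pseudo-effective, so every locus in the statement is a union of closures of centers. It therefore suffices to show, for each divisorial valuation $v$ over $X$, that $v_{\num}(D/S)>0$ if and only if $v_{\num}(D+A_i/S)>0$ for some $i$. Two facts are needed.
\begin{enumerate}
\item[(a)] For an $f$-big $B$ and an $f$-ample $A$, we have $v_{\num}(B+A/S)\le v_{\num}(B/S)$.
\item[(b)] For any $f$-ample $A$, the class $A-A_i$ is $f$-ample for $i\gg 0$, because the $f$-ample cone is open and $|A_i|\to 0$.
\end{enumerate}
For (a), I would use the infimum description of Proposition~\ref{conti} over a fixed affine open $S^0\ni c_X(v)$. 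If $B\equiv_{S^0}B'\ge 0$ and $A\equiv_{S^0}A'\ge 0$, then $v(B'+A')=v(B')+v(A')$, which gives $v_{\num}(B+A/S)\le v_{\num}(B/S)+v_{\num}(A/S)$. It remains to see that $v_{\num}(A/S)=0$ for $f$-ample $A$. Over affine $S^0$, a multiple $mA'$ of a rational ample $A'\equiv A$ is very ample, so one can choose effective members $G\in|mA'|$ not containing $c_X(v)$. This gives $v_{\num}=0$ on rational ample classes, and continuity (Proposition~\ref{conti}) extends this to real ample classes.

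Combining (a) and (b) for big $D+A_i$: $v_{\num}(D+A_i/S)=v_{\num}(D+A+(A_i-A)/S)$, hmm, more cleanly $v_{\num}(D+A/S)=v_{\num}\big((D+A_i)+(A-A_i)/S\big)\le v_{\num}(D+A_i/S)$ for $i\gg0$. Hence the supremum in Definition~\ref{numvan} equals the limit along the sequence, as the definition already asserts.

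Now the equivalence follows. If $v_{\num}(D+A_i/S)>0$ for some $i$, then $v_{\num}(D/S)\ge v_{\num}(D+A_i/S)>0$, since $v_{\num}(D/S)$ is the supremum over all $f$-ample $A$. Conversely, if $v_{\num}(D/S)>0$ (possibly $+\infty$), then since it is the limit of $v_{\num}(D+A_i/S)$, some term is positive. Taking the union of $\overline{c_X(v)}$ over all such $v$ gives the claimed equality.

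I expect the main obstacle to be the subadditivity step (a) for $\R$-divisors, specifically verifying $v_{\num}(A/S)=0$ for real $f$-ample classes and keeping the linear-equivalence bookkeeping over $S^0$ consistent. I would handle it first on rational classes via the infimum formula, and then pass to real classes by the continuity of Proposition~\ref{conti}.
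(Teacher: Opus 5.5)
Your proposal is correct and takes the same route as the paper, whose entire proof is to invoke the identity $v_{\num}(D/S)=\sup_A v_{\num}(D+A/S)=\lim_i v_{\num}(D+A_i/S)$ from Definition~\ref{numvan} and then compare, valuation by valuation, which $v$ satisfy $v_{\num}>0$. Your additional steps fill in details the paper leaves implicit: the monotonicity argument via $v_{\num}(A/S)=0$ for $f$-ample $A$, which justifies that the supremum equals the limit, and the closed-cone argument for non-$f$-pseudo-effective $D$, where both sides equal $X$.
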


\begin{proof}
This follows from the equality in Definition~\ref{numvan}.
\end{proof}

\begin{prop}\label{bcnnef}
With notation as above, let $g:S' \rightarrow S$ be either an open immersion satisfying $c_{X}(v)\in S'$ or a localization $S'=\Spec \sO_{S,\xi} \rightarrow S$ at a point $\xi \in c_{X}(v)$. Then
\begin{equation*}
\begin{gathered}
v_{\num}(D/S) = v_{\num}({g'}^{*}D/S'), \\
{g'}^{-1}(\NNef(D/S))= \NNef({g'}^{*}D/S'),
\end{gathered}
\end{equation*}
where $g':X \times_{S} S' \rightarrow X$ is the natural morphism.
\end{prop}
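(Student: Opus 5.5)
We treat the two claims in order: first the equality of numerical vanishing orders, then the statement about non-nef loci, which follows from the first by taking unions. Throughout we use that $f$-ampleness, $f$-bigness and $f$-pseudo-effectivity are compatible with the base change $g$. For an open immersion this is clear. For a localization, bigness and pseudo-effectivity are read off on the generic fibre $X_\eta$, which $g$ does not change, and ampleness is preserved by base change.

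\textbf{The big $\Q$-Cartier case.} By the Proposition identifying $v(\|D/S\|)$ with $v_{\num}(D/S)$, it suffices to compare asymptotic vanishing orders. Lemma~\ref{bcasyvan} gives exactly $v(\|D/S\|)=v(\|{g'}^{*}D/S'\|)$. Here $\mu:Y\to X$ is base changed to $Y\times_S S'\to X\times_S S'$, and $\Gamma$ pulls back to a prime divisor $\Gamma'$ with $v=\lambda\cdot\ord_{\Gamma'}$ on the base-changed side. This uses that $\xi$ lies in the closure of the image of $c_X(v)$, which is how the hypothesis should be read: $\xi$ specializes from $f(c_X(v))$, i.e.\ $\xi\in\overline{f(c_X(v))}$.

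\textbf{The big $\R$-Cartier case.} The class map $N^1(X/S)_\R\to N^1(X'/S')_\R$ is linear, hence continuous. It sends $\Bigcone(X/S)_\R$ into $\Bigcone(X'/S')_\R$. Both $v_{\num}(-/S)$ and $v_{\num}({g'}^{*}(-)/S')$ are continuous on the big cone by Proposition~\ref{conti}, and they agree on the dense subset of $\Q$-classes. Hence they agree everywhere.

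\textbf{The pseudo-effective case.} Write $v_{\num}(D/S)=\lim_i v_{\num}(D+A_i/S)$ with $|A_i|\to 0$, as in Definition~\ref{numvan}. The pullbacks ${g'}^{*}A_i$ are $f'$-ample and their classes tend to $0$ in $N^1(X'/S')_\R$, so the limit on $X'$ computes $v_{\num}({g'}^{*}D/S')$. The two limits coincide term by term by the big case.

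\textbf{Non-nef loci.} If $D$ is not $f$-pseudo-effective, then ${g'}^{*}D$ is not pseudo-effective either, since $X_\eta$ is unchanged, and both sides equal the whole space. Otherwise, a point $x'\in X'$ lies in $\NNef({g'}^{*}D/S')$ if and only if $x'\in\overline{c_{X'}(w)}$ for some divisorial valuation $w$ over $X'$ with $w_{\num}>0$.

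Divisorial valuations over $X'$ correspond bijectively to divisorial valuations $v$ over $X$ whose center meets $X'$, i.e.\ $c_X(v)$ has image generizing $\xi$ (respectively, lying in $S'$). The correspondence is $\Gamma\mapsto\Gamma\times_S S'$ for divisors on birational models. Under it, $c_{X'}(w)={g'}^{-1}$ of the generic point of $c_X(v)$, and $\overline{c_{X'}(w)}={g'}^{-1}(\overline{c_X(v)})$. The latter uses that $g'$ is a homeomorphism onto the set of generizations, or an open embedding.

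The first part then shows $w_{\num}=v_{\num}$. Conversely, every $v$ whose center closure meets ${g'}(X')$ has center mapping into $S'$ after generization. Combining these gives ${g'}^{-1}(\NNef(D/S))=\NNef({g'}^{*}D/S')$.

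\textbf{Main obstacle.} The delicate step is this last bookkeeping. We must check that every $v$ over $X$ with $\overline{c_X(v)}\cap X'\neq\emptyset$ does come from $X'$; here a localization preserves the generic point of $c_X(v)$ precisely because $\xi\in\overline{f(c_X(v))}$. We must also check that Lemma~\ref{bcasyvan} applies to all such $v$ simultaneously, not just to a fixed one.
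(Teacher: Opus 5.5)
Your proof is correct and follows the same route as the paper. The paper's proof only says that the equality of numerical vanishing orders follows from Lemma~\ref{bcasyvan} and the limit formula in Definition~\ref{numvan}, and that the non-nef statement follows immediately; your continuity step for big $\R$-Cartier classes and your bookkeeping of centers under $g'$ fill in details the paper leaves implicit, and your reading of the hypothesis as $\xi\in\overline{\{f(c_X(v))\}}$ is the right one.
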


\begin{proof}
The former statement follows from Lemma~\ref{bcasyvan} and the equality in Definition~\ref{numvan}.
The latter immediately follows from the former.
\end{proof}

\begin{prop}\label{easycontainment}
With notation as above, $\NNef(D/S)\subset\Bm(D/S)$. 
\end{prop}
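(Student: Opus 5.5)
We prove the contrapositive: if a point $x\in X$ lies outside $\Bm(D/S)$, then $x$ lies outside $\NNef(D/S)$. First we dispose of the trivial cases. If $D$ is not $f$-pseudo-effective, then $\Bm(D/S)=X$ by part (6) of the proposition relating base loci and positivity, so the containment is immediate. We may therefore assume $D$ is $f$-pseudo-effective. Fix a divisorial valuation $v$ with $v_{\num}(D/S)>0$. We must show that $\overline{c_X(v)}\subset\Bm(D/S)$. Since $\Bm(D/S)$ is a countable union of closed sets, it suffices to show that the point $c_X(v)$ itself lies in $\Bm(D/S)$. Indeed, each closed set $\B(D+A/S)$ containing the point $c_X(v)$ contains its closure.

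The reduction step is as follows. By Proposition~\ref{bcnnef} and Proposition~\ref{bcbaselocus}, both sides are compatible with restriction to an open neighbourhood of $f(c_X(v))$. So we may assume $S$ is affine. Choose a sequence of $f$-ample $\R$-Cartier $\R$-divisors $A_i$ with $|A_i|\to 0$ and $D+A_i$ $\Q$-Cartier. By Definition~\ref{numvan}, $v_{\num}(D/S)=\lim_i v_{\num}(D+A_i/S)$. Since this limit is positive, there exists $i$ with $v_{\num}(D+A_i/S)>0$. Set $D_i:=D+A_i$. This is an $f$-big $\Q$-Cartier $\Q$-divisor, since it is pseudo-effective plus ample.

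The key step applies the proposition identifying $v_{\num}$ with the asymptotic vanishing order: $v(\|D_i\|)=v_{\num}(D_i)>0$. Unwinding Definition~\ref{asyvan}, write $v=\lambda\,\ord_\Gamma$ on some $\mu:Y\to X$. Then for every $m$ with $mD_i$ Cartier and every $G\in|mD_i|$, we have $v(G)\geq m\,v(\|D_i\|)>0$. Here we use that $S$ is affine and Remark~\ref{ordrem}, together with the fact that the infimum over $m$ bounds each term. Positivity of $v(G)=\lambda\,\ord_\Gamma(\mu^*G)$ means $\Gamma\subset\Supp\mu^*G$, so $c_X(v)=\mu(\eta_\Gamma)\in\Supp G$. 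Hence every section of $\sO_X(mD_i)$ vanishes at $c_X(v)$. Using the description of $\Bs|mD_i/S|$ as an intersection over $H^0$ for affine $S$, we get $c_X(v)\in\Bs|mD_i/S|$ for all such $m$, and so $c_X(v)\in\B(D_i/S)\subset\Bm(D/S)$.

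The main technical point to check is the identification of "vanishing of every section at the centre" with membership in the relative base locus when $S$ is only locally affine. This is handled by the affine reduction together with Proposition~\ref{bcbaselocus}. A second point is that the $A_i$ can be chosen so that $D+A_i$ is $\Q$-Cartier. This holds because the ample cone is open in $N^1(X/S)_\R$, so one can perturb $A_i$ within it. Everything else is formal.
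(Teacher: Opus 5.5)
Your proof is correct and is essentially the paper's argument: the paper reduces to affine $S$ via Propositions~\ref{bcbaselocus} and \ref{bcnnef} and then invokes the argument of \cite[Lemma~2.6]{BBP}. That argument is exactly what you spell out: pick a small $f$-ample $A$ with $D+A$ $\Q$-Cartier and $v(\|D+A\|)>0$, so every member of $|m(D+A)|$ contains $c_X(v)$, and hence $c_X(v)\in\B(D+A/S)\subset\Bm(D/S)$.
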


\begin{proof}
By Proposition~\ref{bcbaselocus} and \ref{bcnnef}, we may assume that $S$ is affine. Then the statement follows from the same arguments as those in \cite[Lemma~2.6]{BBP}.
\end{proof}

\begin{lem}[{cf.~\cite[Chapter~2, 3.7~Lemma(4)]{Nak}}]\label{goodeffective}
With notation as above, suppose that $S$ is affine and $D$ is big $\Q$-Cartier.
Let $\Gamma$ be a prime divisor on $X$.
If $\ord_{\Gamma}(\|D\|)=0$, then for every ample $\Q$-Cartier $\Q$-divisor $A$ on $X$, there exists an effective $\Q$-Cartier $\Q$-divisor $E$ on $X$ such that $E\sim_{\Q}D+A$ and $\Gamma\not\subset\Supp E$.
\end{lem}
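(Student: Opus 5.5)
The plan is to follow Nakayama's argument: first use bigness to produce a small effective representative of a fixed big class, then use $\ord_\Gamma(\|D\|)=0$ to find sections of high multiples of $D$ with small order along $\Gamma$. The contribution along $\Gamma$ will then be absorbed by a divisor in the linear system of a large multiple of an ample class. Since $S$ is affine, Remark~\ref{ordrem} applies, so $\ord_\Gamma(\|D\|)=\inf_m \frac1m\min\{\ord_\Gamma D'\mid mD\sim D'\ge 0\}$. We use absolute linear equivalence throughout.

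First, fix the ample $\Q$-Cartier $\Q$-divisor $A$ and split it as $A=\frac12A+\frac12A$. Choose $k\in\Z_{>0}$ such that $\frac{k}{2}A$ is Cartier and very ample over the affine base $S$, so that $\sO_X(\frac k2A)$ is globally generated. For $m\gg0$ this gives a member $H\in|\frac{mk}{2}A|$ whose support does not contain $\Gamma$: take a general section, which does not vanish at the generic point of $\Gamma$ by global generation. This gives effective divisors in $\Q$-multiples of $A$ avoiding $\Gamma$ for free. The only remaining issue is to handle $D$ itself.

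Since $\ord_\Gamma(\|D\|)=0$, for every $\epsilon>0$ there exist $m$ and an effective $D_m\sim mD$ with $\frac1m\ord_\Gamma D_m<\epsilon$. Write $D_m=a\Gamma+D_m'$ with $\Gamma\not\subset\Supp D_m'$, where $a=\ord_\Gamma D_m<m\epsilon$. Then
\[D+A\sim_\Q \tfrac1m D_m'+\tfrac am\Gamma+A.\]
The term $\frac am\Gamma$ must be removed. The fix is to use the ampleness of $A$: for $\epsilon$ small, $A-\frac am\Gamma$ should be $\Q$-linearly equivalent to an effective divisor not containing $\Gamma$. The difficulty is that $\Gamma$ need not be $\Q$-Cartier, since $X$ is only normal. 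To get around this, I would replace $\Gamma$ by a Cartier divisor containing it. Choose an effective Cartier divisor $G$ with $G\ge\Gamma$, which exists because $X$ is quasi-projective over an affine scheme: take a section of a sufficiently ample line bundle vanishing on $\Gamma$. Then choose $N$ with $NA-G$ ample, so that $NA-G\sim_\Q$ some effective $\Q$-divisor $B$ avoiding $\Gamma$, by global generation of its multiples as in the first step.

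With $\epsilon$ small enough that $\frac am\le\frac1N$, we can write
\[\tfrac am\Gamma+A\sim_\Q\tfrac am\Gamma-\tfrac amG+\tfrac amB+(1-\tfrac{aN}{m})A.\]
Here $\frac am(G-\Gamma)$ is effective but may contain other components. We rewrite it as $-\frac am(G-\Gamma)$, which is not effective. This is the hard step: the residual divisor $G-\Gamma$ carries a negative sign.

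To fix this, I would instead choose $D_m$ and then use $\frac{a}{m}(D_m'' )$ more carefully. The cleaner route is to replace $D$ by $D+\frac12A$, which is big, and apply the argument above. Alternatively, I would work with the Cartier divisor $\frac am G$ directly. Namely, I would use
\[D+A\sim_\Q \tfrac1mD_m+A,\]
and compare it to an expression with $A':=A-\frac amG$ ample for small $\epsilon$. Then $\frac1mD_m+\frac amG$ still contains $\Gamma$.

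The real fix is the following. Take $F\in|\ell(\frac12A)|$ general, and note that $\sO_X(mD)\otimes\sO_X(\text{large ample})$ is globally generated at the generic point of $\Gamma$ modulo $\Gamma$-order at most $a$. I expect to need Nakayama's argument here: since $\ord_\Gamma$ of the asymptotic order is $0$, sections of $mD+jA$ not vanishing along $\Gamma$ exist for suitable $j\le m\cdot(\text{small})$. This follows because the multiplication map $H^0(mD)\otimes H^0(jA)\to H^0(mD+jA)$, combined with Serre vanishing for $\sO_X(jA)\otimes(\sO_X/\sO_X(-a\Gamma))$, allows one to kill the order along $\Gamma$. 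Making this step precise with uniform bounds is the main obstacle.
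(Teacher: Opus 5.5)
Your proposal has a genuine gap: it never actually produces $E$. You correctly locate the difficulty. After writing $D+A\sim_{\Q}\frac1m D_m'+\delta\Gamma+A$ with $\delta=a/m$ small, you must trade $\delta\Gamma$ for something effective that avoids $\Gamma$. Since $\Gamma$ need not be $\Q$-Cartier, this has to go through a Cartier divisor $G\ge\Gamma$, and that leaves a residual $-\delta(G-\Gamma)$ with the wrong sign. You do not resolve this. Your displayed identity is also wrong: with $B\sim_{\Q}NA-G$, its right-hand side is $\Q$-linearly equivalent to $\frac am\Gamma+A-\frac{2a}{m}G$, not to $\frac am\Gamma+A$. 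The sign should be $NA+G$: you add $\delta G$ to cancel $\delta\Gamma$ and pay for it with $\delta NA$. The final ``real fix'' via multiplication maps and Serre vanishing on $\sO_X(jA)\otimes\sO_X/\sO_X(-a\Gamma)$ is only a sketch, and you yourself flag its uniform bounds as unproven.

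The missing idea is to make the residual $G-\Gamma$ absorbable by a \emph{fixed} effective representative of $A$.
\begin{itemize}
\item Replace $A$ by an effective $\Q$-linearly equivalent divisor with $\Gamma\not\subset\Supp A$.
\item Choose an effective Cartier divisor $G$ with $\ord_\Gamma G=1$ and $\Supp(G-\Gamma)\subset\Supp A$. For instance, take a Cartier $H$ with $\sO_X(H-\Gamma)$ globally generated and a member $G'\in|H-\Gamma|$ not containing $\Gamma$, and set $G=\Gamma+G'\sim H$. Then use the representative $A=\frac1k(F+G')$, where $F$ is a member of the globally generated $|kA-G'|$ not containing $\Gamma$. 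This is the kind of argument the paper cites from HLS, Lemma 5.5.
\item Fix $N$ so large that $NA+G$ is ample and $A-\frac1N(G-\Gamma)\ge0$.
\item Only then pick $D_m$ with $\delta=\ord_\Gamma(D_m)/m<\frac1{2N}$.
\end{itemize}
With these choices,
\[
\tfrac1m D_m+A=\tfrac1m D_m'+\delta(NA+G)+\bigl((1-\delta N)A-\delta(G-\Gamma)\bigr).
\]
The last bracket is $\ge\frac12\bigl(A-\frac1N(G-\Gamma)\bigr)\ge0$ and does not contain $\Gamma$. The term $\delta(NA+G)$ is ample, hence $\Q$-linearly equivalent to an effective divisor avoiding $\Gamma$; substituting it gives the desired $E$, which is $\Q$-Cartier because $E\sim_{\Q}D+A$.

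This is essentially the paper's argument. The order of choices, with $N$ fixed before $\delta$, is what makes the absorption possible, and no Serre-vanishing estimate along $\Gamma$ is needed.
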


\begin{proof}
We may assume that $A\geq 0$ and $\Gamma \not\subset \Supp A$.
By the same arguments as those in \cite[Lamma~5.5]{HLS}, take an effective $\Q$-Cartier $\Q$-divisor $G$ such that $\ord_{\Gamma}G=1$ and $\Supp(G-\Gamma)\subset \Supp A$.
Suppose that $m$ is a sufficiently large positive integer such that $mA+G$ is ample.
There exists an effective $\Q$-Cartier $\Q$-divisor $E'$ such that $E' \sim_{\Q} D $ and $\delta:=\ord_{\Gamma}E'<\frac{1}{2m}$ since $\ord_{\Gamma}(\|D\|)=0$. Write $E'=E''+\delta\Gamma$, where $E''$ is an effective $\Q$-divisor such that $\Gamma\not\subset\Supp E''$. 
Then 
\[ E''+\delta\left(mA+G\right)+\left(\left(1- \delta m\right)A-\delta(G-\Gamma)\right)=E'+A\sim_{\Q} D+A. \]
Since $\Supp(G-\Gamma)\subset \Supp A$,
\[
\left(1- \delta m\right)A-\delta\left(G-\Gamma\right) \geq \frac{1}{2}A -\frac{1}{2m}\left(G-\Gamma\right)=
\frac{1}{2}\left(A -\frac{1}{m}\left(G-\Gamma\right)\right)\geq 0
\]
for any sufficiently large $m$, and the leftmost divisor does not contain $\Gamma$ in its support.
Since $mA+G$ is ample, we can take $E$ in the statement.
\end{proof}

\begin{prop}[{cf.~\cite[Lemma~3.7(4)]{LX}}]\label{codimone}
With notation as above, the sets of codimension one points of $X$ contained in
$\NNef(D/S)$ and $\Bm(D/S)$ coincide.
\end{prop}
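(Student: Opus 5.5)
The inclusion of codimension one points of $\NNef(D/S)$ into $\Bm(D/S)$ is Proposition~\ref{easycontainment}, so only the reverse direction needs an argument. Let $\Gamma$ be a prime divisor on $X$ whose generic point $\gamma$ lies in $\Bm(D/S)$. We must show $\gamma\in\NNef(D/S)$, i.e.\ that some divisorial valuation $v$ with $\gamma\in\overline{c_X(v)}$ has $v_{\num}(D/S)>0$. The natural candidate is $v=\ord_\Gamma$ itself. We argue by contradiction, assuming $\ord_{\Gamma,\num}(D/S)=0$.

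First reduce to the case where $S$ is affine and $D$ is big. By Propositions~\ref{bcbaselocus} and \ref{bcnnef}, we may replace $S$ by an affine open neighborhood of $f(\gamma)$, since both loci are compatible with restriction to opens containing the relevant center. If $D$ is not $f$-pseudo-effective, then $\NNef(D/S)=X$ and there is nothing to prove. Otherwise $D$ is pseudo-effective, and by Remark~\ref{baselocusfacts}(3) and Proposition~\ref{nneffact}, with a sequence $A_i\to 0$ of ample $\R$-divisors, we have $\gamma\in\Bm(D+A_i/S)$ for some $i$. Meanwhile $\ord_{\Gamma,\num}(D+A_i)\le \ord_{\Gamma,\num}(D)=0$ by monotonicity (the supremum in Definition~\ref{numvan}). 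So we may replace $D$ by $D+A_i$, which is big. Perturbing further by a small ample so that $D$ becomes $\Q$-Cartier requires a little care, since $\Bm(D)=\bigcup_j\Bm(D+A'_j)$ and the vanishing orders only decrease; with that done, we may assume $D$ is a big $\Q$-Cartier $\Q$-divisor with $\ord_\Gamma(\|D\|)=\ord_{\Gamma,\num}(D)=0$, using the proposition equating $v(\|D/S\|)$ with $v_{\num}(D/S)$.

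Next apply Lemma~\ref{goodeffective}. Let $A$ be any ample $\Q$-Cartier $\Q$-divisor, and fix an ample $\R$-divisor $A$ such that $D+A$ is $\Q$-Cartier; by density we may take $A$ itself to be a $\Q$-divisor. The lemma gives an effective $E\sim_\Q D+A$ with $\Gamma\not\subset\Supp E$. Choosing $m$ divisible enough that $mE$ is Cartier and $mE\sim m(D+A)$, the section defining $mE$ does not vanish at $\gamma$. Hence $\gamma\notin\Bs|m(D+A)/S|$, and therefore $\gamma\notin\B(D+A/S)$. Since this holds for every ample $\Q$-divisor $A$, and $\Bm(D/S)$ is the union of the $\B(D+A/S)$, which by Remark~\ref{baselocusfacts}(3) may be taken over ample $\Q$-divisors, we conclude $\gamma\notin\Bm(D/S)$. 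This contradicts the choice of $\Gamma$, so the codimension one points of the two loci coincide.

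The main obstacle is the bookkeeping in the reduction step: passing from $\R$-divisors to big $\Q$-Cartier divisors while keeping $\gamma$ in $\Bm$ and keeping the numerical vanishing order along $\Gamma$ equal to zero. The step that makes this work is the monotonicity $v_{\num}(D+A)\le v_{\num}(D)$ together with the countable-union descriptions in Remark~\ref{baselocusfacts}(3) and Proposition~\ref{nneffact}.
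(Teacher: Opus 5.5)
Your proposal is correct and follows the paper's proof. Both reduce to $S$ affine and $D$ a big $\Q$-Cartier $\Q$-divisor via Propositions~\ref{bcbaselocus}, \ref{bcnnef}, \ref{nneffact} and Remark~\ref{baselocusfacts}(3), then apply Lemma~\ref{goodeffective} to get $\Gamma\not\subset\B(D+A/S)$ for every ample $\Q$-Cartier $\Q$-divisor $A$, hence $\Gamma\not\subset\Bm(D/S)$. Your monotonicity bookkeeping for $v_{\num}$ only spells out the paper's terse reduction, and the ``by density'' step is unnecessary: once $D$ is $\Q$-Cartier, any ample $A$ with $D+A$ $\Q$-Cartier is automatically a $\Q$-divisor.
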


\begin{proof}
We may assume that $S$ is affine and $D$ is big $\Q$-Cartier by Proposition \ref{bcbaselocus}, \ref{bcnnef}, \ref{nneffact}, and Remark~\ref{baselocusfacts}(3).
By Proposition~\ref{easycontainment}, it is enough to show that for every prime divisor $\Gamma$ on $X$, if $\ord_{\Gamma}(\|D\|)=0$, then $\Gamma\not\subset\Bm(D/S)$.
By Lemma~\ref{goodeffective}, for every ample $\Q$-Cartier $\Q$-divisor $A$ on $X$, $\Gamma\not\subset\B(D+A/S)$. Hence, $\Gamma\not\subset\Bm(D/S)$.
\end{proof}

\begin{cor}\label{2dimcase}
With notation as above, if $\dim X\leq 2$, then
\[ \NNef(D/S)=\Bm(D/S). \]
\end{cor}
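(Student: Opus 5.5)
We prove the two inclusions, after reductions. By Proposition~\ref{bcbaselocus} and Proposition~\ref{bcnnef} we may replace $S$ by an affine open neighbourhood, or by a localization, of the image of any given point. By Proposition~\ref{nneffact} and Remark~\ref{baselocusfacts}(3), we may then replace $D$ by $D+A_i$ with $A_i$ ample, so that $D$ is big. The inclusion $\NNef(D/S)\subset\Bm(D/S)$ is Proposition~\ref{easycontainment}, so only $\Bm(D/S)\subset\NNef(D/S)$ remains. If $D$ is not $f$-pseudo-effective, both sides equal $X$ and there is nothing to prove.

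Let $x\in\Bm(D/S)$ and split by the dimension of $\overline{\{x\}}$.

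\emph{$x$ is the generic point of $X$.} Then $\Bm(D/S)=X$. By the characterization of $\Bm(D/S)\subsetneq X$ as $f$-pseudo-effectivity, $D$ is not $f$-pseudo-effective, so $\NNef(D/S)=X$ by definition.

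\emph{$x$ has codimension one.} Proposition~\ref{codimone} gives $x\in\NNef(D/S)$ directly.

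\emph{$x$ is a closed point.} Since $\dim X\le 2$, this is the only remaining case. By Corollary~\ref{clpt}(2) there is an irreducible closed subset $Z$ with $\{x\}\subsetneq Z\subset\Bm(D/S)$. Hence $Z$ is either all of $X$ or a prime divisor. If $Z=X$, then $\NNef(D/S)=X$ by the first case. If $Z$ is a prime divisor, its generic point lies in $\NNef(D/S)$ by Proposition~\ref{codimone}. Since $\NNef(D/S)$ is a union of closures of centers, it contains $\overline{Z}=Z\ni x$.

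Note that when $\dim X\le 1$, no point has codimension at least $2$, so the first two cases already cover everything. The main point is the closed-point case, which rests on Corollary~\ref{clpt}, i.e.\ on the relative Fujita--Zariski theorem. The only care needed is to check that the reductions to affine $S$ and to big $D$ are compatible with both loci, which is exactly what Propositions~\ref{bcbaselocus}, \ref{bcnnef}, \ref{nneffact} and Remark~\ref{baselocusfacts}(3) provide.
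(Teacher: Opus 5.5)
Your proof is correct and follows the paper's argument: $\NNef(D/S)\subset\Bm(D/S)$ always holds, Proposition~\ref{codimone} handles codimension-one points, and Corollary~\ref{clpt}(2) pushes each codimension-two (hence closed) point into an irreducible $Z\subset\Bm(D/S)$, which is either $X$ or a prime divisor. Your preliminary reductions to affine $S$ and big $D$ are harmless but unnecessary, since Proposition~\ref{codimone} and Corollary~\ref{clpt} are already stated in the general relative setting (and the reduction to big $D$ only makes sense after the non-pseudo-effective case has been disposed of).
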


\begin{proof}
This follows from Proposition~\ref{codimone} and Corollary~\ref{clpt}(2).
\end{proof}

\begin{prop}\label{nnefempty}
With notation as above, if every residue field of $S$ is uncountable, then $\NNef(D/S)=\emptyset$ if and only if $D$ is $f$-nef.
\end{prop}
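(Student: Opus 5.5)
One direction is formal. If $D$ is $f$-nef, then $D+A$ is $f$-ample for every $f$-ample $A$. So $\Bm(D/S)=\emptyset$ by the characterization of $\Bm(D/S)=\emptyset$ proved above, and Proposition~\ref{easycontainment} gives $\NNef(D/S)=\emptyset$. Note also that $\NNef(D/S)=\emptyset$ forces $D$ to be $f$-pseudo-effective, since otherwise $\NNef(D/S)=X$.

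For the converse, assume $\NNef(D/S)=\emptyset$ and, aiming for a contradiction, let $C\subset X$ be an irreducible closed curve with $s:=f(C)$ a closed point and $(D\cdot C)<0$.

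\emph{Reduction to a local base.} Intersection numbers with $C$ and numerical vanishing orders of valuations centered on $C$ are unchanged by localizing $S$ at $s$ (Proposition~\ref{bcnnef}). So we may assume $S=\Spec R$ with $(R,\m_R)$ local, and that $C$ lies in the closed fibre $X_s$, whose residue field $k(s)$ is uncountable.

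\emph{Finding a valuation.} For each $i$ set $D_i:=D+A_i$, with $A_i$ $f$-ample and $|A_i|\to0$. These are $f$-big, and we may take them $\Q$-Cartier by continuity (Proposition~\ref{conti}). Choose $i$ so that $(D_i\cdot C)<0$ still holds. Since $S$ is affine, every effective $E\sim_{\Q}D_i$ contains $C$, so $C\subset\B(D_i/S)$. We want a single divisorial valuation $v$ centered at a point of $C$ with $v_{\num}(D_i)>0$. Then $v_{\num}(D/S)\ge v_{\num}(D_i/S)>0$ by monotonicity in Definition~\ref{numvan}, contradicting $\NNef(D/S)=\emptyset$.

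To get $v$, follow Nakayama's and Sato's absolute argument, using uncountability as follows. Let $\nu:X'\to X$ be the normalized blow-up along $C$. For each generic point of an irreducible component of the exceptional divisor, take the corresponding divisorial valuations, and also the valuations attached to points $x\in C$ (for instance, the normalized blow-ups of the closed points $x\in C(k)$). For every effective $E\equiv_S D_i$, the restriction $(E\cdot C)<0$ forces $C\subset\Supp E$. Hence $\ord_x(E)\ge c\cdot(-(D_i\cdot C))$ at points $x\in C$, with $c>0$ a uniform constant coming from a fixed ample divisor; the aim is to get this uniformity from an intersection-theoretic bound on $C$ or its normalization.

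\emph{Countability and the main obstacle.} The main obstacle is showing that the infimum defining $v_{\num}$ does not go to zero for every single valuation. Each $\Q$-divisor $E$ is determined by countably many data: the set $\{E\sim_{\Q}mD_i\}$ is a union over $m$ of projective spaces of sections over $R$, and only countably many proper closed conditions are involved. With $C$ having uncountably many closed points over $k(s)$, one argues as follows. If for every $x\in C$ some effective $E_x\equiv D_i$ had small multiplicity at $x$, then a countable family of closed subsets would cover $C$. Baire-type reasoning (uncountable field) then gives one $E$ with small multiplicity along all of $C$, namely at the generic point of $C$. That contradicts the lower bound $\ord_C(\nu^*E)\gtrsim -(D_i\cdot C)$ obtained by pulling back to the normalization of $C$. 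I would first make the multiplicity lower bound rigorous via $\nu^*E\ge \ord\cdot\mathrm{Exc}$ and degree on the normalized curve; this is where I expect most technical effort.
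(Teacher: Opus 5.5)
Your outer frame matches the paper and is sound: the ``if'' direction via $\Bm(D/S)=\emptyset$ and Proposition~\ref{easycontainment}, the localization of $S$, and the reduction to a big $\Q$-Cartier $D_i=D+A_i$ with $(D_i\cdot C)<0$, using $v_{\num}(D/S)\ge v_{\num}(D_i/S)$. But the whole content of the converse is the existence of a divisorial valuation $v$ with center on $C$ and $v_{\num}(D_i)>0$. You do not establish this; you explicitly defer the needed lower bound, and the steps you offer toward it do not work.

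\begin{itemize}
\item From $C\subset\Supp E$ for each effective $E\equiv_S D_i$ nothing uniform follows. So ``hence $\ord_x(E)\ge c\cdot(-(D_i\cdot C))$'' is exactly the missing argument, not a consequence.
\item A degree computation on $C$ or its normalization cannot detect multiplicities. $E$ does not restrict to $C$ as a divisor when $C\subset\Supp E$, and restricting the line bundle only returns the number $(D_i\cdot C)$.
\item The Baire step is not well-posed as written, since over the local ring $R$ the systems $|mD_i|$ are not projective spaces over $k(s)$. It is also superfluous. Your intended contradiction is with a uniform lower bound on the order of $E$ along a divisor lying over the generic point of $C$. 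Such a bound by itself already says $v_{\num}(D_i)>0$ for that divisorial valuation, whose center is the generic point of $C$. That gives $C\subset\NNef(D_i/S)$ directly, with no closed points involved.
\end{itemize}

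The missing idea is to work with one prime divisor \emph{over} $C$ and use positivity \emph{on} that divisor, going up in dimension rather than down to $C$. Take a projective birational $\mu\colon Y\to X$ with $Y$ normal and a prime divisor $\Gamma\subset Y$ with $\mu(\Gamma)=C$; then $\Gamma$ is a projective variety over $\kappa(s)$.

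Suppose $\ord_\Gamma(\|\mu^*D_i\|)=0$. Lemma~\ref{goodeffective}, applied on $Y$, gives for every ample $\Q$-divisor $A$ on $Y$ an effective $E\sim_\Q\mu^*D_i+A$ with $\Gamma\not\subset\Supp E$. Restricting $E$ to $\Gamma$ and letting $A\to 0$ shows that $(\mu^*D_i)|_\Gamma=(\mu|_\Gamma)^*(D_i|_C)$ is pseudo-effective.

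This is impossible because $\deg(D_i|_C)<0$. The paper cites Nakayama here. Alternatively, for $H$ ample on $\Gamma$, the projection formula makes $((\mu|_\Gamma)^*(D_i|_C)\cdot H^{\dim\Gamma-1})$ a positive multiple of $(D_i\cdot C)$, while pseudo-effective classes meet $H^{\dim\Gamma-1}$ non-negatively.

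Hence $\ord_\Gamma(\|\mu^*D_i\|)>0$. Since $v(\|D_i\|)=v_{\num}(D_i)$ for big $D_i$, you get $C\subset\NNef(D/S)$, the desired contradiction. The essential ingredient is the moving curve class $H^{\dim\Gamma-1}$ on $\Gamma$, equivalently the divisors avoiding $\Gamma$ supplied by Lemma~\ref{goodeffective}. This is what your ``degree on the normalized curve'' lacks.
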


\begin{proof}
We may assume that $S$ is affine and $D$ is big $\Q$-Cartier by Proposition \ref{bcnnef} and \ref{nneffact}.
The ``if'' part follows from Proposition~\ref{easycontainment}. We prove the ``only if'' part by contradiction. 

Suppose that $D$ is not $f$-nef, i.e., there exists an irreducible closed curve $C \subset X$ such that $\xi:=f(C)$ is a closed point of $S$ and $(D. C) <0$. Take a projective birational morphism $\mu:Y \rightarrow X$ from a normal integral scheme and a prime divisor $\Gamma$ on $Y$ such that $\mu(\Gamma)=C$.
Then $(\mu^{*}D)|_{\Gamma}=\mu|_{\Gamma} ^{*}(D|_{C})$ is not pseudo-effective by \cite[Chapter~2, 5.6.~Lemma(2)]{Nak}.
Here, we use the fact that $\Gamma$ and $C$ are projective varieties over the uncountable field $\kappa(\xi)$.

On the other hand, since $\NNef(D/S)=\emptyset$, we obtain $\ord_{\Gamma}(\| \mu^{*}D \|)=0$.
By Lemma~\ref{goodeffective}, for any ample $\Q$-Cartier $\Q$-divisor $A$ on $Y$, there exists an effective $\Q$-Cartier $\Q$-divisor $E$ such that $E\sim_{\Q}\mu^{*}D +A$ and $\Gamma\not\subset\Supp E$.
Hence, $(\mu^{*}D)|_{\Gamma}$ is pseudo-effective.
This is a contradiction.
\end{proof}

\section{Uniform stability of BCM-regularity under perturbations}\label{sec4}

In this section, we prove uniform stability of BCM-regularity under perturbations of divisors, which is analogous to \cite[Corollary~3.14]{Sato}.
This is essentially proved in \cite[the paragraph following Definition~5.3.1]{CLM} for $\Q$-Gorenstein rings.
Throughout this section,
let $(R,\m,k)$ be a complete normal Noetherian local domain of residue characteristic $p>0$.

\begin{prop}[{cf.~\cite[Corollary~3.14]{Sato} and \cite[Definition~5.3.1]{CLM}}]\label{stability}
Let $(R,\Delta)$ be a BCM-regular pair.
Then there exists $\delta\in\R_{>0}$ such that $(R,\Delta +t\cdot\divi(r))$ is BCM-regular for every $0\neq r\in \m$ and every $t\in\Q_{\geq 0}$ satisfying $t\cdot \ord_{\m}(r)<\delta$.
\end{prop}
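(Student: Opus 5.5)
The plan is to reduce the uniform statement to a single ``test element'' phenomenon: BCM-regularity survives perturbation by any element lying in a fixed power of $\m$. The introduction says the proposition follows from \cite[Lemma~2.5.1 and Corollary~2.5.3]{CLM} (Lemma~\ref{replaceBCM}). I will use it in the following form: there is a single perfectoid big Cohen--Macaulay $R^+$-algebra $B$ (for instance $\widehat{R^+}$, or a $B$ dominating all others) such that BCM-regularity of any pair $(R,\Delta')$ is equivalent to $\BCM_B$-regularity. This turns the intersection over all $B$ in Definition~\ref{BCMtestideal} into a condition on one algebra, so all estimates can be made uniformly inside $H^d_\m(B)$.

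Step 1 uses BCM-regularity of $(R,\Delta)$ together with the openness/perturbation property of BCM-test ideals (\cite{MS}, \cite{BMP2}). I choose $\epsilon\in\Q_{>0}$ and an element $0\neq c\in\m$ with $(R,\Delta+\epsilon\,\divi(c))$ still BCM-regular. The natural guess is $c$ in a power of $\m$, or more simply $\m$ itself viewed as an ideal: find $\epsilon>0$ such that $(R,\Delta,\m^{\epsilon})$ is BCM-regular. Under the $\Q$-Gorenstein hypothesis, this is the content of the paragraph after \cite[Definition~5.3.1]{CLM}. In general I would get it by first enlarging $\Delta$ slightly, which keeps it BCM-regular and makes $K_R+\Delta$ $\Q$-Cartier with index $m$, and then applying the small-perturbation property to a system of generators $x_1,\dots,x_n$ of $\m$. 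By the summation formula, $\taubcm(R,\Delta,\m^{\epsilon})\supseteq\taubcm(R,\Delta+\epsilon\sum_i\divi(x_i)/n')$-type terms, and these equal $R$ for $\epsilon$ small.

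Step 2 passes from the ideal $\m$ to an arbitrary element $r$. If $\ord_\m(r)=a$, then $r\in\m^{a}$. Monotonicity of BCM-test ideals in the ideal then gives
\[
\taubcm(R,\Delta,r^{t})\supseteq\taubcm(R,\Delta,\m^{ta}),
\]
which is $R$ whenever $ta<\delta:=\epsilon$. It remains to identify $\taubcm(R,\Delta,r^t)$ with $\taubcm(R,\Delta+t\,\divi(r))$. This holds directly from Definition~\ref{BCMtestideal}: we multiply by $f^{1/m}r^{t}\in R^+$, and $K_R+\Delta+t\divi(r)$ is again $\Q$-Cartier. Monotonicity in the ideal is likewise immediate from the definition, since a larger ideal yields more multipliers and hence a larger image.

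The main obstacle is Step 1, and specifically the uniformity. The perturbation statements in \cite{MS} are for a fixed divisor, whereas $\m^{\epsilon}$ requires controlling all elements of $\m$ simultaneously. For this I intend to use Lemma~\ref{replaceBCM}: in the single algebra $B$, the element $x^{\epsilon}$ for a single $x\in\m$ already kills no nonzero socle class of the image of $H^d_\m(R(K_R))$. Since the image is detected by the socle of $E_R(k)$, which is a finite-dimensional $k$-vector space, finitely many generators of $\m$ suffice. Taking a uniform $\epsilon$ over them and using $\m^{N\epsilon}\subseteq\sum_i x_i^{\epsilon}R^+$ for $N=n$ then closes the argument.
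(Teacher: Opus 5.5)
There is a genuine gap, and it is in Step~2: the inclusion there points the wrong way. From $r\in\m^{a}$ you get $(r)\subseteq\m^{a}$. The monotonicity you yourself invoke (a larger ideal gives a larger test ideal) then yields $\taubcm(R,\Delta,r^{t})\subseteq\taubcm(R,\Delta,\m^{ta})$, not $\supseteq$. So Step~1, even if granted, says nothing about $(R,\Delta+t\,\divi(r))$.

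The reversed inclusion is false, not a typo. Already for multiplier ideals on a smooth surface germ, $\J(\m)=\sO$ but $\J(\divi(x))=(x)$, although $\ord_{\m}(x)=1$. Your Step~2 would instead predict $\J(\divi(x))=\sO$. Containment $r\in\m^{a}$ can only bound the test ideal of $r^{t}$ from above. The proposition needs a lower bound that holds for all $r$ simultaneously, and no comparison with $\m^{s}$ supplies one.

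The repair in your last paragraph does not close this. For a fixed $B$, the elements $c\in R^+$ with $c\cdot\phi(\eta)=0$ in $H^d_\m(B)$ form an ideal of $R^+$. Knowing that $x_1^{\epsilon},\dots,x_n^{\epsilon}$ lie outside this ideal tells you nothing about elements of $\sum_i x_i^{\epsilon}R^+$. So showing $r^{t}\in\sum_i x_i^{\epsilon}R^+$ is again an argument in the wrong direction. Separately, the socle of $E_R(k)$ is one-dimensional and has nothing to do with the number of generators of $\m$. Lemma~\ref{replaceBCM} also does not say that a single $B$ detects BCM-regularity of all pairs; it is a Gabber-type almost-vanishing statement.

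The missing idea is to control \emph{all} killers at once by a valuation, as the paper does:
\begin{enumerate}
\item Fix a divisorial valuation $w$ centered at $\m$ and extend it to $v$ on $\overline{K(R)}$.
\item Let $I^{\Delta}_{\BCM}\subset R^+$ be the ideal of those $c$ with $c\cdot\phi(\eta)=0$ in $H^d_\m(B)$ for some $B$.
\item Suppose $\inf v(I^{\Delta}_{\BCM})=0$. Pick $c_j$ with $v(c_j)\to 0$ and witnesses $B_j$, and dominate all the $B_j$ by one $B$ via \cite[Theorem~4.9]{MS}.
\item Lemma~\ref{replaceBCM} then gives $B'$ in which $\phi(\eta)$ itself dies, contradicting BCM-regularity of $(R,\Delta)$.
\item Hence $\delta:=v(I^{\Delta}_{\BCM})>0$. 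Whenever $t\,w(r)<\delta$ we have $r^{t}\notin I^{\Delta}_{\BCM}$, so $(R,\Delta+t\,\divi(r))$ is BCM-regular.
\item Izumi's theorem, $w(r)\le C^{-1}\ord_{\m}(r)$, converts this into the stated bound with constant $C\delta$.
\end{enumerate}
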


For the proof, we need the following lemma.

\begin{lem}[{\cite[Lemma~2.5.1 and Corollary~2.5.3]{CLM}}]\label{replaceBCM}
Let $B$ be a perfectoid big Cohen-Macaulay $R^+$-algebra, and let $v$ be a real valuation of $\overline{K(R)}$ such that $v(\m R^+)>0$.
Suppose that $\{c_j\}_{j\in\Z_{\geq 0}}$ is a sequence of elements of $R^+$ satisfying $v(c_j)\rightarrow 0$ as $j\rightarrow\infty$. 
Then there exists a perfectoid big Cohen-Macaulay $R^+$-algebra $B^{\prime}$ dominating $B$
such that
for every $\eta\in H^{d}_{\m}(B)$, if $c_j \eta=0$ for every $j\in\Z_{\geq 0}$, then the induced morphism $H^{d}_{\m}(B)\rightarrow H^{d}_{\m}(B^{\prime})$ maps $\eta$ to $0$.
\end{lem}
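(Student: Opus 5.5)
The plan is to follow Gabber's strategy, as in \cite[Section~2.5]{CLM}, in two stages. First I pass to a perfectoid big Cohen--Macaulay algebra in which the $c_j$ generate an ideal containing elements of arbitrarily small $p$-adic size. This makes the image of $\eta$ almost zero with respect to $p^{1/p^\infty}$. Then I pass to a further perfectoid big Cohen--Macaulay algebra in which almost zero classes in $H^d_\m$ become honestly zero.

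\emph{Step 1 (the valuation ring).} Let $V\subset \overline{K(R)}$ be the valuation ring of $v$. Since $v\geq 0$ on $R^+$, we have $R^+\subset V$. Because $v(\m R^+)>0$, we get $v(p)>0$, so $p\in\m_V$. Let $\widehat V$ be the $p$-adic completion of $V$. This is a perfectoid ring: $V$ is integrally closed in an algebraically closed field, hence $p$-root closed and Frobenius is surjective on $V/p$. Since $v(c_j)\to 0$, for every $n$ some $c_j$ satisfies $v(c_j)<v(p^{1/p^n})$. Hence $p^{1/p^n}\in c_jV$. In $\widehat V$, the ideal generated by $\{c_j\}$ therefore contains $p^{1/p^\infty}$.

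\emph{Step 2 (first enlargement).} Form the $p$-completed tensor product $B\widehat\otimes_{R^+}\widehat V$, which is a perfectoid $R^+$-algebra. Choose a perfectoid big Cohen--Macaulay $R^+$-algebra $B_1$ receiving compatible maps from $B$ and from $\widehat V$. For existence I would invoke André's weakly functorial construction of perfectoid BCM algebras, applied to $B\to B\widehat\otimes\widehat V$. Alternatively, I would use the fact (\cite[Theorem~2.5]{BMP}) that $p$-completed absolute integral closures are BCM, applied to the complete local domain obtained from a suitable quotient of the completed tensor product. Let $\eta\in H^d_\m(B)$ satisfy $c_j\eta=0$ for all $j$. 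By Step 1, its image $\eta_1\in H^d_\m(B_1)$ is killed by $p^{1/p^n}$ for all $n$, i.e.\ $\eta_1$ is $(p^{1/p^\infty})$-almost zero.

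\emph{Step 3 (killing almost zero classes).} It remains to find a perfectoid BCM algebra $B'$ over $B_1$ in which every $(p^{1/p^\infty})$-almost zero class of $H^d_\m(B_1)$ maps to $0$. I would first try the module of almost elements $B_{1,*}=\Hom(p^{1/p^\infty}B_1,B_1)$, which is again perfectoid and almost isomorphic to $B_1$. Its local cohomology is almost isomorphic to that of $B_1$, so $B_{1,*}$ is almost BCM. Then I would apply André's theorem, in the form that almost Cohen--Macaulay perfectoid algebras map to honest perfectoid BCM algebras, via partial algebra modifications together with the fact that almost zero classes die after adjoining $p$-power roots. The point to check is that the specific class $\eta_1$ dies. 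This holds because for each $n$ the equation exhibiting $p^{1/p^n}\eta_1=0$ can be fed into an algebra modification that forces $\eta_1$ itself to vanish. Such modifications remain nondegenerate, precisely because the obstruction is only almost zero. Taking $B'$ to be the resulting algebra, the composite $B\to B_1\to B'$ is the desired domination. The construction does not depend on $\eta$: Step 3 kills all almost zero classes simultaneously.

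\emph{Main obstacle.} The main difficulty is Step 3, namely showing that the algebra modifications which kill almost zero local cohomology classes never produce the zero ring, i.e.\ that $\m B'\neq B'$. The existence claim in Step 2 also needs care, since the tensor product must not degenerate modulo $\m$; here one uses that $\m\widehat V\neq\widehat V$, which follows from $v(\m R^+)>0$. For Step 3 I would first attempt the valuation-theoretic bound in André's argument. There, a degenerate modification would give $1\in \m\cdot(\text{modification})$, and one pushes this relation into the almost mathematics over $\widehat V$, where it contradicts $v(\m)>0$.
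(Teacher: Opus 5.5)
The paper does not prove this lemma; it quotes it from \cite{CLM} (Gabber's argument). Your proposal has a genuine gap, and it already appears in Step~2. Once $d=\dim R\geq 2$, the algebra $B_1$ you ask for does not exist, and $\m\widehat{V}\neq\widehat{V}$ cannot repair this. Suppose a BCM $R^+$-algebra $B_1$ received an $R^+$-compatible map from $\widehat{V}$. Pick part of a system of parameters $x_1,x_2$ of $R$ with $v(x_1)\leq v(x_2)$. Then $x_2\in x_1\widehat{V}$, so $x_2\in x_1B_1$. Hence $x_2$ would be a nonzerodivisor on $B_1/x_1B_1$ that acts by zero, forcing $B_1=x_1B_1\subseteq\m B_1$. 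André's weak functoriality concerns Noetherian complete local domains, so it cannot be applied to $\widehat{V}$.

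Worse, the conclusion Steps~1--2 aim for fails in every BCM $R^+$-algebra. Take $R=\Z_p[[x]]$ and $c_j=x^{1/p^j}$. The ideal $\sum_j c_jB_1=\bigcup_j x^{1/p^j}B_1$ never contains $p^{1/p^n}$. Indeed, $p^{1/p^n}\in x^{1/p^j}B_1$ would give $p^{p^j}\in x^{p^n}B_1$, which is impossible because $x^{p^n},p^{p^j}$ is a regular sequence on $B_1$ and $\m B_1\neq B_1$. So ``$c_j$-almost zero'' cannot be turned into ``$p^{1/p^\infty}$-almost zero'' by making the $c_j$ divide $p^{1/p^n}$.

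Step~3 is not a reduction either. Killing all $p^{1/p^\infty}$-almost zero classes is just the special case $c_j=p^{1/p^j}$ of the lemma. Your claim that the modifications ``remain nondegenerate precisely because the obstruction is only almost zero'' is the entire content, and it is left unproved.

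The missing idea, which your last paragraph only gestures at, is a bound uniform in $j$, applied directly to the original $c_j$; no detour through $\widehat{V}$ is needed.
\begin{enumerate}
\item Represent each relevant $\eta$ by $b\in B$ modulo $(x_1^t,\dots,x_d^t)$. Since $B$ is BCM, $c_j\eta=0$ means $c_jb=\sum_i x_i^tb_{ij}$.
\item Kill $\eta$ by adjoining variables $T_i$ with $b=\sum_i x_i^tT_i$, then run Hochster's modifications. A degeneracy would come from a finite chain $B\to M_0\to\cdots\to M_r$ with $1\in\m M_r$.
\item For each $j\gg 0$, send $T_i\mapsto b_{ij}/c_j$ in $B[1/c_j]$. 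Solve every later relation in $B$ after multiplying by a power of $c_j$, using that $B$ is BCM and $c_j$ is a nonzerodivisor on $B$. This gives $c_j^N\in\m B$, where $N$ depends only on the chain and not on $j$.
\item $B$ is BCM over the normalization $S_j$ of $R[c_j]$, and BCM closure is contained in integral closure. Hence $c_j^N\in\overline{\m S_j}$, so $N\,v(c_j)\geq\min_i v(m_i)>0$ for generators $m_i$ of $\m$. This contradicts $v(c_j)\to 0$.
\item Finally, dominate the resulting BCM algebra by a perfectoid one.
\end{enumerate}
The valuation enters only through this numerical estimate, never as an algebra to tensor with.
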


\begin{proof}[Proof of Proposition~\ref{stability}]
Take a divisorial valuation $w$ over $R$ centered on $\m$, and extend $w$ to a real valuation $v$ of $\overline{K(R)}$.
By Izumi's theorem, there exists $C\in\R_{>0}$ such that
\[ \ord_{\m}(r)\geq C\cdot w(r) \]
for every $r\in R$. Thus, it suffices to prove the statement in which $\ord_{\m}$ is replaced by $w$.

Let $\eta$ denote a socle generator of $H^d_{\m}(\omega_R)\cong E_R(k)$.
We define an ideal $I_{\BCM}^{\Delta}$ of $R^+$ as
\[ I_{\BCM}^{\Delta}:=\{c\in R^+ \mid \text{There exists $B$ such that $H^d_{\m}(\omega_R)\xrightarrow{\phi} H^d_{\m}(B) \xrightarrow{c} H^d_{\m}(B)$ maps $\eta$ to $0$} \}. \]
Here, $B$ is a perfectoid big Cohen-Macaulay $R^+$-algebra and $\phi$ is the morphism defined in Definition~\ref{BCMtestideal}.
Note that for every $R$-module $M$, an $R$-linear map $E_R(k)\rightarrow M$ is not injective if and only if it maps $\eta$ to $0$.
Thus, it is enough to show that
\[ v(I_{\BCM}^{\Delta}):=\inf\{v(c) \mid c\in I_{\BCM}^{\Delta} \}>0 \]
since $\delta:=v(I_{\BCM}^{\Delta})$ satisfies the statement for $w$.

Suppose that $v(I_{\BCM}^{\Delta})=0$.
Then there exists a sequence $\{c_j\}_{j\in\Z_{\geq 0}}$ of elements of $R^+$ satisfying the following:
\begin{enumerate}
    \item $v(c_j)\rightarrow 0$ as $j\rightarrow\infty$, and
    \item for every $j\in\Z_{\geq 0}$, there exists a big Cohen-Macaulay $R^+$-algebra $B_j$ such that $H^d_{\m}(\omega_R)\rightarrow H^d_{\m}(B_j) \xrightarrow{c_j} H^d_{\m}(B_j)$ is not injective.
\end{enumerate}
By \cite[Theorem~4.9]{MS}, there exists a perfectoid big Cohen-Macaulay $R^+$-algebra $B$ dominating all $B_j$. Thus, $B_j$ in Condition (2) can be replaced by $B$.
By Lemma~\ref{replaceBCM}, there exists a perfectoid big Cohen-Macaulay $R^+$-algebra $B^{\prime}$ dominating $B$ such that $H^d_{\m}(\omega_R)\rightarrow H^d_{\m}(B)\rightarrow H^d_{\m}(B^{\prime})$ maps $\eta$ to $0$.
This contradicts the BCM-regularity of $(R,\Delta)$.
\end{proof}

By analogy with \cite[Corollary~3.14]{Sato},
one can ask the following question:

\begin{question}
For a log $\Q$-Gorenstein pair $(R,\Delta)$,
does there exist $\delta\in\R_{>0}$ such that
\[ \taubcm(R,\Delta)=\taubcm(R,\Delta+t\cdot\divi(r)) \]
for every $0\neq r\in \m$ and every $t\in\Q_{\geq 0}$ satisfying $t\cdot \ord_{\m}(r)<\delta$?
\end{question}

We are not able to answer this question since a strategy similar to that of the proof above does not seem to work.

\subsection{The local BCM-alpha invariant}\label{sec:BCMalpha}

In this section, we give some estimations of (a variant of) the supremum of $\delta$ in Proposition~\ref{stability}, which we call the local BCM-alpha invariant.
Since we do not use these results in the following sections, the readers who are not interested can skip this section.

\begin{defn}\label{BCMalpha}
Let $(R,\Delta)$ be a BCM-regular pair.
\begin{enumerate}
\item 
The BCM-regular threshold $\bcmt(R,\Delta;r)$ of $(R,\Delta)$ with respect to $0\neq r\in\m$ is defined as
\begin{align*}
\bcmt(R,\Delta;r)
&:=\sup\{t\in\Q_{\geq0} \mid \text{$(R,\Delta+t\cdot\divi(r))$ is BCM-regular} \} \\
&=\inf\{t\in\Q_{\geq0} \mid \text{$(R,\Delta+t\cdot\divi(r))$ is not BCM-regular} \}.
\end{align*}
\item
The local BCM-alpha invariant $\alpha_{\BCM}(R,\Delta)$ of $(R,\Delta)$ is defined as
\[ \alpha_{\BCM}(R,\Delta) := \inf_{0\neq r\in\m} \bcmt(R,\Delta;r)\cdot\nord_{\m}(r), \]
where $\nord_{\m}(r)$ denotes the normalized order of $r$.
\end{enumerate}
When $R$ is of characteristic $p>0$,
the BCM-regular threshold coincides with the $F$-pure threshold, and
the local BCM-alpha invariant $\alpha_{\BCM}(R,\Delta)$ coincides with the local $F$-alpha invariant $\alpha_{F}(R,\Delta)$ recently introduced by Takagi and Yamaguchi \cite[Definition~3.3]{TY}.
\end{defn}

\begin{rem}\label{Falpha}
The local $F$-alpha invariant is a local version of Pande's $F$-alpha invariant \cite[Definition~3.4]{Pande2} \cite[Definition~3.19]{TY}, which is defined for a projective globally $F$-regular pair $(X,\Delta)$ over an $F$-finite field and for an ample $\Q$-Cartier $\Q$-divisor $\Gamma$ on $X$ by
\[ \alpha_{F}(X,\Delta;\Gamma):=\inf_{0\leq D\sim_{\Q}\Gamma}\gfst(X,\Delta;D), \]
where
$\gfst(X,\Delta;D):=\sup\{t\in\Q_{\geq 0}\mid\text{$(X,\Delta+tD)$ is globally $F$-regular} \}$ is the globally $F$-split threshold of $(X,\Delta)$ with respect to $D$.
This invariant is a positive characteristic analogue of Tian's $\alpha$-invariant \cite{Tian} and the global log canonical threshold \cite[Definition~9.15]{BHJ} in K-stability theory for complex varieties.
It is known that all of these invariants are positive real numbers.
\end{rem}

\begin{rem}
The inequalities $0<\alphabcm(R,\Delta)\leq 1$ hold.
The former follows from Propositon~\ref{stability} 
and the latter follows from the similar arguments to \cite[Proposition~3.5(1)]{TY} since BCM-regular pairs are klt \cite[Corollary~6.22]{MS}.
\end{rem}

First, we relate the local BCM-alpha invariant to the $F$-alpha invariant in Remark~\ref{Falpha} by global-to-local reduction.

\begin{lem}[{\cite[Lemma~7.2]{MST}}]\label{localglobal}
Suppose that $R$ is $\Q$-Gorenstein, $\dim R \geq 2$, and the residue field $k=R/\m$ is $F$-finite.
Let $\pi:Y\rightarrow X=\Spec R$ be a blow up of some $\m$-primary ideal $\mathcal{I}$.
Suppose that $\mathcal{I}\cdot\sO_{Y} =\sO_{Y}(-mE)$ for some $m\in\Z_{>0}$, where $E$ is a normal prime $\pi$-exceptional divisor.
Let $(X,\Delta)$ be a log $\Q$-Gorenstein pair.
Let $\Delta_E$ denote the different of $K_Y + E+\pi_{*}^{-1}\Delta$ along $E$.
Suppose that $(E,\Delta_E)$ is globally $F$-regular.
Then $(R,\Delta)$ is BCM-regular.
\end{lem}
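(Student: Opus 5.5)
The plan is to compare the perfectoid big Cohen--Macaulay cohomology of $R$ with that of the graded pieces of the extended Rees algebra, reducing BCM-regularity of $(R,\Delta)$ to global $F$-regularity of $(E,\Delta_E)$. This is the mixed characteristic analogue of the classical argument that a plt blow-up with strongly $F$-regular exceptional divisor forces strong $F$-regularity.

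\textbf{Step 1 (set-up).} Since $R$ is $\Q$-Gorenstein and $(R,\Delta)$ is log $\Q$-Gorenstein, write $K_Y+E+\pi_*^{-1}\Delta=\pi^*(K_X+\Delta)+aE$ with $a\in\Q$. Because $-E$ is $\pi$-ample ($\mathcal{I}\sO_Y=\sO_Y(-mE)$) and $E$ is normal, adjunction along $E$ gives the different $\Delta_E$.

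\textbf{Step 2 (positivity of the discrepancy).} I would show that global $F$-regularity of $(E,\Delta_E)$, together with the anti-ampleness of $E|_E$, forces $a>0$. The pair $(Y,E+\pi_*^{-1}\Delta)$ is then plt near $E$, and the local BCM-test ideal of $(Y,E+\pi_*^{-1}\Delta)$ restricts to $\sO_E$ along $E$ by inversion of adjunction for BCM-test ideals. That inversion of adjunction is part of the restriction formula in Proposition~\ref{uniftest}(5), applied in its complete-local form from \cite{MS}, \cite{MST}.

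\textbf{Step 3 (socle generator).} Let $\eta$ be the socle generator of $H^d_{\m}(\omega_R)$. Using the section ring $\bigoplus_n H^0(Y,\sO_Y(-nE))$, I would write $H^d_{\m}(\omega_R)$ via the spectral sequence for $\bR\Gamma_{\m}\bR\pi_*$. The image of $\eta$ is then detected in $H^{d-1}(E,\omega_E(\ldots))$, which is Matlis/Serre dual to global sections on $E$.

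\textbf{Step 4 (injectivity).} Global $F$-regularity of $(E,\Delta_E)$ makes the trace maps from the perfectoid/$+$-closure of $E$ surjective on those section modules (Frobenius splitting of all twists). Lifting along $R\to\Rphat$ and using that $\Rphat$ is perfectoid big Cohen--Macaulay \cite[Theorem~2.5]{BMP}, one gets $\eta\mapsto$ a nonzero element of $H^d_{\m}(\Rphat)$ after multiplication by $f^{1/m}$. Since $\Rphat$ dominates $R^+$, by \cite[Theorem~4.9]{MS} it can be taken cofinal among perfectoid BCM algebras. Hence $\tau_{\BCM}(R,\Delta)=R$.

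\textbf{Main obstacle.} The hardest step is Step 4, transporting global $F$-regularity of the characteristic $p$ divisor $E$ to non-vanishing in $H^d_{\m}(B)$ in mixed characteristic. For this I would imitate \cite{BMP}'s "$+$-stable sections" argument, in which $H^i$ of $Y^+$ twisted by $\pi$-ample bundles vanishes up to almost zero and graded pieces are controlled by $E^+$.
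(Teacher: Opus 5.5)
The paper gives no proof of this lemma; it is quoted from \cite[Lemma~7.2]{MST}. Your outline therefore has to stand on its own, and as written it has a genuine gap.

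\textbf{The core steps are missing.} Steps~1--2 are fine. Restricting $K_Y+E+\pi_*^{-1}\Delta=\pi^*(K_X+\Delta)+aE$ to $E$ gives $K_E+\Delta_E\sim_{\Q}aE|_E$. Since $-(K_E+\Delta_E)$ is big for a globally $F$-regular pair and $-E|_E$ is ample, this forces $a>0$. The plt conclusion of Step~2, however, is never used afterwards. All the content of the lemma lies in Steps~3--4, and these are only announced.

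Step~3 also has the direction reversed. You do not restrict $\eta$ to $E$. Rather, $\eta$ lies in the image of $H^{d-1}(E,\sO_E(K_E+\cdots))$ under the connecting map of the residue sequence along $E$, followed by the trace $H^d_E(Y,\cdots)\to H^d_{\m}(\cdots)$. Dually, this is the statement that $R\to H^0(E,\sO_E)$ has image $k$.

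The real task is then to show that this class is not killed by $H^d_{\m}(R(K_R))\xrightarrow{f^{1/m}}H^d_{\m}(B)$. That needs at least three ingredients:
(i) a factorization of this map through $H^d_E$ of suitable sheaves on $Y$ or $Y^+$, with the twist by $aE$ tracked;
(ii) an adjunction sequence on $Y^+$ along a lift of $E$, together with a vanishing theorem for the $\pi$-ample divisor $-E$ (this is where $a>0$ must enter), ensuring that the class coming from $E^+$ survives in degree $d$;
(iii) injectivity of $H^{d-1}(E,\sO_E(K_E+\cdots))\to H^{d-1}(E^+,\cdots)$, i.e.\ global $+$-regularity of $(E,\Delta_E)$. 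This follows from global $F$-regularity, but it is more than Frobenius splitting of twists and has to be invoked.

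There is no direct map from cohomology of the characteristic-$p$ scheme $E$ to $H^d_{\m}(\Rphat)$. So ``lifting along $R\to\Rphat$'' has no meaning without (i) and (ii). Saying you would imitate \cite{BMP} names the tool but supplies none of this.

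\textbf{The concluding step is backwards.} A perfectoid BCM $R^+$-algebra $B$ is $p$-adically complete, so $R^+\to B$ extends to $\Rphat\to B$. Moreover, whenever $B\to B'$ one has $\tau_{B'}(R,\Delta)\subseteq\tau_B(R,\Delta)$. Hence $\Rphat$ is the \emph{initial} choice, not a cofinal one. Nonvanishing of $f^{1/m}\eta$ in $H^d_{\m}(\Rphat)$ gives only $\BCM_{\Rphat}$-regularity, which is the weakest of these conditions. \cite[Theorem~4.9]{MS} produces algebras dominating given ones; it does not make $\Rphat$ cofinal. To conclude $\taubcm(R,\Delta)=R$, you must either run the argument for an arbitrary $B$, or cite a genuine comparison theorem such as \cite[Theorem~8.11]{BMP2}. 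That is exactly how the paper passes from $\BCM_{\Rphat}$-regularity to BCM-regularity in Proposition~\ref{RLRpositivity}.
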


\begin{prop}\label{alphabcmf}
In the situation of Lemma~\ref{localglobal},
set
\[ C:=\inf_{0\neq r\in\m} \frac{\nord_{\m}(r)}{\ord_E(r)}. \]
Then $C$ is a positive real number and 
\[ \alphabcm(R,\Delta)\geq C\cdot \alpha_F (E, \Delta_E; -E|_E)>0.\]
\end{prop}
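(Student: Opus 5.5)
First I would show that $C>0$. Since $E$ is a prime divisor on a normal $Y$ lying over the closed point, $\ord_E$ is a divisorial valuation centered on $\m$. By Izumi's theorem (used as in the proof of Proposition~\ref{stability}) there is a constant $c>0$ with $\ord_{\m}(r)\geq c\cdot\ord_E(r)$ for all $r$. The same holds for $\nord_{\m}$, since $\nord_{\m}\geq\ord_{\m}$. This gives $C\geq c>0$. For finiteness, any $r$ with $\ord_E(r)>0$ gives a finite ratio $\nord_{\m}(r)/\ord_E(r)$, because $\nord_{\m}(r)<\infty$ for $r\neq 0$. So $C\in\R_{>0}$. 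Positivity of $\alpha_F(E,\Delta_E;-E|_E)$ is recalled in Remark~\ref{Falpha}. Note that $-E|_E$ is ample because $\sO_Y(-mE)$ is $\pi$-ample on the blowup.

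For the main inequality, fix $0\neq r\in\m$ and $t\in\Q_{\geq 0}$ with
\[ t\cdot\ord_E(r)<\alpha_F(E,\Delta_E;-E|_E). \]
I will show that $(R,\Delta+t\divi(r))$ is BCM-regular. Granting this, $\bcmt(R,\Delta;r)\geq \alpha_F/\ord_E(r)$, so
\[ \bcmt(R,\Delta;r)\,\nord_{\m}(r)\geq \alpha_F\cdot\frac{\nord_{\m}(r)}{\ord_E(r)}\geq C\,\alpha_F. \]
Taking the infimum over $r$ gives the claim.

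To get BCM-regularity I apply Lemma~\ref{localglobal} to the pair $(X,\Delta+t\divi(r))$. The hypotheses on $R$, $\pi$ and $E$ are unchanged, and the pair is still log $\Q$-Gorenstein because $\divi(r)$ is Cartier. What must be checked is global $F$-regularity of the new different. Write $\pi^*\divi(r)=\pi_*^{-1}\divi(r)+\ord_E(r)E$. Then
\[ K_Y+E+\pi_*^{-1}(\Delta+t\divi(r)) = K_Y+E+\pi_*^{-1}\Delta+t\pi^*\divi(r)-t\ord_E(r)E, \]
and $\pi^*\divi(r)$ is principal. Hence the new different along $E$ is
\[ \Delta_E+t\,(\pi_*^{-1}\divi(r))|_E, \]
possibly up to a correction coming from how $\pi_*^{-1}\divi(r)$ meets $E$. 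By additivity of the different with respect to $\Q$-Cartier summands, that correction is exactly the restriction. Set $D:=(\pi_*^{-1}\divi(r))|_E\geq 0$. Restricting the linear equivalence $\pi_*^{-1}\divi(r)\sim -\ord_E(r)E$ gives
\[ D\sim_{\Q}\ord_E(r)\cdot(-E|_E). \]
So $D':=D/\ord_E(r)$ is an effective divisor $\Q$-linearly equivalent to $-E|_E$. Then $t\ord_E(r)<\alpha_F\leq\gfst(E,\Delta_E;D')$, so $(E,\Delta_E+t\ord_E(r)D')=(E,\Delta_E+tD)$ is globally $F$-regular. Lemma~\ref{localglobal} then applies.

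The main obstacle is this different computation. One must make sure that $\pi_*^{-1}\divi(r)$ is $\Q$-Cartier near $E$, so that its restriction is defined; this holds because it equals a principal divisor minus a multiple of $E$, and $E$ is $\Q$-Cartier since $\sO_Y(-mE)$ is invertible. One must also check that adding a $\Q$-Cartier divisor not containing $E$ changes the different by exactly its restriction. If $r$ has $\ord_E(r)=0$ this cannot occur, since $r\in\m$ forces $\ord_E(r)>0$.
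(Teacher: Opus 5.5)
Your proposal is correct and follows the paper's proof. Izumi's theorem gives $C>0$, and the inequality comes from applying Lemma~\ref{localglobal} to $(R,\Delta+t\,\divi(r))$, using that the different becomes $\Delta_E+t\,(\pi_*^{-1}\divi(r))|_E$ with $\pi_*^{-1}\divi(r)\sim_{\Q}-\ord_E(r)E$. Beyond that, you spell out details the paper leaves implicit: that the restriction is effective and $\Q$-Cartier, that $-E|_E$ is ample, and how to pass from the $\gfst$ bound to the bound on $\bcmt$.
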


\begin{proof}
By Izumi's theorem \cite[the paragraph preceding Theorem~2.6]{HublSwanson}, $C$ is positive.
The inequality follows from Lemma~\ref{localglobal} and the fact that the different of $K_Y +E+\Delta+t\cdot\pi_{*}^{-1}\divi(r)$ along $E$ equals
$\Delta_E +t\cdot(\pi_{*}^{-1}\divi(r))|_{E}$ and
$\pi_{*}^{-1}\divi(r)\sim_{\Q}-\ord_{E}(r)E$.
\end{proof}

\begin{rem}\label{lowdim}
Suppose that $(R,\Delta)$ be a completion of a klt pair of dimension at most $3$ that is defined over a DVR.
By \cite[Lemma~5.11]{LiuPande}, one can obtain a plt blow-up of $(R,\Delta)$ using MMP.
In \cite[Theorem~7.11]{MST} and \cite[Theorem~5.14]{LiuPande}, by applying Lemma~\ref{localglobal},
it is shown that
the following cases are BCM-regular pairs:
\begin{enumerate}
    \item $\dim R=2$ and $\Delta$ has standard coefficients.
    \item $\dim R=3$, $R$ is $\Q$-Gorenstein, and $(R,\Delta)$ is non-weakly exceptional.
\end{enumerate}
One can apply Proposition~\ref{alphabcmf} to these cases.
\end{rem}

Next, we give some estimates of the local BCM-alpha invariant for complete regular local rings.
The first statement of the following proposition is a counterpart of \cite[Corollary~7.8]{HLS} for BCM-test ideals.

\begin{prop}[{cf.~\cite[Corollary~7.8]{HLS}}]\label{RLRpositivity}
Let $(R,\m,k)$ be a complete regular local ring of residue characteristic $p>0$,
and let $\Delta\geq 0$ be a $\Q$-divisor on $\Spec R$.
\begin{enumerate}
\item 
If $\ord_{\m}(\Delta)<1$,
then
$\alphabcm(R,\Delta)\geq 1-\ord_{\m}(\Delta)$.
In particular, $\alphabcm(R,0)=1$.
\item If $k$ is $F$-finite, $\lfloor\Delta\rfloor=0$, and $\Supp\Delta$ is simple normal crossing,
then 
\[ \alphabcm(R,\Delta)=1-\max\{ \text{coefficients of $\Delta$}\}. \]
\end{enumerate}
\end{prop}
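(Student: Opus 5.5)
The plan is to reduce both parts to a criterion for BCM-regularity of pairs on complete regular local rings. For (1) we need, for every $0\neq r\in\m$ and every $t\in\Q_{\geq 0}$ with $t\cdot\nord_{\m}(r)<1-\ord_{\m}(\Delta)$, that $(R,\Delta+t\cdot\divi(r))$ is BCM-regular.

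Since $R$ is regular, every power $\m^{n}$ is integrally closed, so $\nord_{\m}(r)=\ord_{\m}(r)$. Hence
\[ \ord_{\m}(\Delta+t\cdot\divi(r))\leq \ord_{\m}(\Delta)+t\cdot\ord_{\m}(r)<1. \]
It therefore suffices to prove the mixed characteristic analogue of Proposition~\ref{regklt}: if $R$ is a complete regular local ring and $\ord_{\m}(\Delta)<1$, then $(R,\Delta)$ is BCM-regular. Note that $R$ is regular, hence $\Q$-Gorenstein, so the pair is log $\Q$-Gorenstein.

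I would prove this criterion by induction on $d=\dim R$, mimicking Proposition~\ref{regklt}. For $d=1$, $R$ is a DVR and $\Delta=a\cdot\divi(\pi)$ with $a<1$; BCM-regularity here is a direct computation with $\pi^{1-a}\in R^+$ acting on $H^1_{\m}(B)$. For the induction step I would pick $h\in\m\setminus\m^2$ such that $R/hR$ is regular and $\ord_{\m/hR}(\Delta|_{R/hR})=\ord_{\m}(\Delta)$. I would then apply inversion of adjunction (the restriction formula for BCM-test ideals, \cite{MS}, \cite{MST}). This says that BCM-regularity of $(R/hR,\Delta|_{R/hR})$ forces $(R,\Delta+\divi(h))$ to be purely BCM-regular near $V(h)$, and hence $(R,\Delta)$ to be BCM-regular. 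If $R/hR$ has characteristic $p$, the base of the induction on that side is instead the statement for strong $F$-regularity, which agrees with BCM-regularity there.

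Choosing $h$ needs an infinite residue field to apply \cite[Proposition~8.5.7(3)]{HS}. So I would first replace $R$ by the completion of $R[x]_{\m R[x]}$. This extension is faithfully flat with regular closed fiber and preserves $\ord_{\m}$. I would check that BCM-regularity descends along it, using that the injectivity of $E_R(k)\to H^d_{\m}(B)$ can be tested after such a base change, for instance via the perfectoid BCM algebra $\widehat{R^+}$ of \cite{BMP}. The claim $\alphabcm(R,0)=1$ then follows: (1) gives $\geq 1$, and the upper bound $\leq 1$ is the earlier remark.

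For (2), write $\Delta=\sum a_i\divi(x_i)$ with $x_1,\dots,x_d$ a regular system of parameters, as allowed by the snc assumption, and let $a:=\max a_i$. The upper bound comes from $r=x_i$ with $a_i=a$: then $\nord_{\m}(x_i)=1$, and for $t>1-a$ the coefficient of $\divi(x_i)$ in $\Delta+t\cdot\divi(x_i)$ exceeds $1$. The pair is then not klt, hence not BCM-regular by \cite[Corollary~6.22]{MS}, so $\bcmt(R,\Delta;x_i)\leq 1-a$.

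For the lower bound I would use Proposition~\ref{alphabcmf} with $\pi$ the blow-up of $\m$ (so $\mathcal{I}=\m$ and $m=1$). Here $E\cong\PP^{d-1}_k$ with $-E|_E=\sO(1)$, and $\ord_E=\ord_{\m}=\nord_{\m}$, so $C=1$. The different $\Delta_E$ is $\sum a_i H_i$ for coordinate hyperplanes $H_i$. It remains to show $\alpha_F(\PP^{d-1}_k,\sum a_iH_i;\sO(1))\geq 1-a$, with $k$ $F$-finite as assumed. This is a global $F$-regularity statement: for every effective $D\sim_{\Q}\sO(1)$ and $t<1-a$, the pair $(\PP^{d-1},\sum a_iH_i+tD)$ is globally $F$-regular. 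I expect this to be the main obstacle. I would first try to deduce it from the cone over $\PP^{d-1}$, which is $k[[x_1,\dots,x_d]]$ itself, together with Fedder-type criteria and the corresponding computation of the local $F$-alpha invariant in \cite{TY}. If that route fails, I would fall back on a direct induction on $d$ by restricting to a general hyperplane, as in part (1).
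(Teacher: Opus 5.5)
Your proposal is correct and follows the paper's proof essentially step by step. For (1) you reduce via $\nord_{\m}=\ord_{\m}$ to the criterion ``$\ord_{\m}(\Delta)<1$ implies BCM-regular'', and prove it by induction on $\dim R$ using a general $h\in\m\setminus\m^2$ that preserves the order \cite[Proposition~8.5.7(3)]{HS} together with the restriction formula of \cite{MST}, after first enlarging the residue field. For (2) you get the upper bound from klt-ness and the lower bound from Proposition~\ref{alphabcmf} applied to the blow-up of $\m$, which reduces everything to $\alpha_F(\PP^{d-1},\sum a_iH_i;\sO(1))\geq 1-\max a_i$; the paper settles this exactly by the cone-plus-Fedder route you name, citing \cite[Proposition~3.21]{TY} and \cite[Lemma~3.9(1)]{takagi}.

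The only difference is how the residue field is made infinite, and it is cosmetic. You pass to the completion of $R[x]_{\m R[x]}$ and descend by faithful flatness (hence purity). The paper instead uses Cohen's structure theorem to pass to $C_{\overline{k}}[[x_1,\dots,x_d]]/(f)$ and descends via the splitting of $R\to R'$ and \cite[Lemma~7.1]{MST}. Both descents go through $\widehat{R^+}$ and rely on \cite[Theorem~8.11]{BMP2} to identify $\BCM_{\widehat{R^+}}$-regularity with BCM-regularity.
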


\begin{proof}
For (1), it is enough to show that $(R,\Delta=t\cdot\divi(r))$ is $\BCM_{\Rphat}$-regular by \cite[Theorem~8.11]{BMP2}.

First, suppose that $k$ is infinite.
We follow similar arguments to those in \cite[Proposition~9.5.13]{Laz} and \cite[Theorem~7.7]{HLS}.
When $\dim R=0$, the statement holds, since $R$ is a field and $\taubcm(R,\Delta)\neq 0$.
Now suppose that we know the statement for $(\dim R-1)$-dimensional cases.
Since $k$ is infinite, there exists $h\in\m\setminus {\m}^2$ such that $\ord_{\m}(\Delta)=\ord_{\m/hR}(\Delta|_{R/hR})$ by \cite[Proposition~8.5.7(3)]{HS}.
By \cite[Proposition~2.10 and Theorem~3.1]{MST},
\[
\tau_{\widehat{(R/hR)^{+}}}(R/hR,\Delta|_{R/hR}) \subset \tau_{\Rphat}(R,\Delta)\cdot (R/hR).
\]
Thus, the statement for $R$ follows by the induction hypothesis.

Suppose that $k$ is not infinite.
Let $C_k$ be a Cohen ring of $k$.
By Cohen's strucuture theorem, we may assume that
$R=k[[x_1,\cdots,x_d]]$ or 
$R=C_k[[x_1,\cdots,x_d]]/(f)$, where $f\in\m$ satisfies $f=0$ or $f\in\m\setminus\m^2$.
We only prove the latter case since the former case can be proven by similar arguments.

Take a Cohen ring $C_{\overline{k}}$ and a non-canonical inclusion $C_k\hookrightarrow C_{\overline{k}}$ by \cite[Theorem~4.1 and Theorem~6.2]{Cohen}.
Since $C_k$ is a DVR, $C_k\hookrightarrow C_{\overline{k}}$ is a faithfully flat morphism and hence splits as a $C_k$-linear map.
Set $R^{\prime}:=C_{\overline{k}}[[x_1,\cdots,x_d]]/(f)$.
Let $\m^{\prime}$ denote a unique maximal ideal of $R^{\prime}$ and $\Delta^{\prime}$ the pullbuck of $\Delta$ to $R^{\prime}$.
By \cite[Lemma~2.28]{INS} and the definition of $R^{\prime}$,
the induced map $R\hookrightarrow R^{\prime}$ splits as an $R$-linear map
and
$\ord_{\m}(\Delta)=\ord_{\m^{\prime}}(\Delta^{\prime})$.
Consider the commutative diagram
\[
  \xymatrix{
    R    \ar[r] \ar[d] & \Rphat  \ar[d]  \\
    R^{\prime} \ar[r]   & \widehat{(R^{\prime})^{+}},
  }
\]
where the vertical arrows map $1$ to $r^t$.
Since the left vertical arrow splits, if the bottom horizontal arrow is pure, then the top horizontal arrow is so.
Since $(R^{\prime},\Delta^{\prime})$ is $\BCM_{\widehat{{{R}^{\prime}}^+}}$-regular,
$(R,\Delta)$ is $\BCM_{\Rphat}$-regular by \cite[Lemma~7.1]{MST}.

Next, we prove (2).
Set $d:=\dim R$.
Write $\Delta=\sum_{i=1}^{d} a_i\cdot \divi(x_i)$, where $x_1,\cdots,x_d$ is a regular parameter system.
It is enough to show that
\[ \alphabcm(R,\Delta)\geq 1-\max_{i}{a_i} \]
since the reverse inequality easily follows from the fact that BCM-regular pairs are klt \cite[Corollary~6.22]{MS}.
When $\dim R\leq 1$, this follows from (1).
Let $\pi:Y\rightarrow X=\Spec R$ be the blow-up of $\m$.
By applying Lemma~\ref{alphabcmf} in this situation,
\[
\alphabcm(R,\Delta) 
\geq \alpha_F (\PP^{d-1},\sum a_i \cdot \divi(T_i);\sO_{\PP^{d-1}}(1))
\]
since $\ord_\m$ is identical to the vanishing order of a unique prime $\pi$-exceptional divisor.
By \cite[Proposition~3.21]{TY} and Fedder's criterion \cite[Lemma~3.9(1)]{takagi},
\begin{equation*}
\alpha_F (\PP^{d-1},\sum a_i \cdot \divi(T_i);\sO_{\PP^{d-1}}(1))\geq 1-\max_{i}a_i . \qedhere
\end{equation*}
\end{proof}

\section{Comparison of non-nef loci and restricted base loci}

In this section, we prove the theorems stated in the Introduction (Theorem~\ref{maintheoremzero}, \ref{maintheoremp}, \ref{maintheoremtestideal}, \ref{maintheorem}, and Corollary~\ref{3dimkltcase}).
Note that we have already shown in Proposition~\ref{easycontainment} that the containment $\NNef(D/S)\subset\Bm(D/S)$ holds in general.

\subsection{Equal characteristic zero}

Let $S$ be an integral excellent $\Q$-scheme admitting a dualizing complex, and let $f:X\rightarrow S$ be a projective surjective morphism from a normal integral scheme.
In this section, we use (asymptotic) multiplier ideals defined in Section~\ref{secmult}.

\begin{prop}[{cf.~\cite[Lemma~4.1 and Corollay~4.4]{CdB}}]\label{compzero}
Suppose that $S$ is affine. Let $(X,\Delta)$ be a log $\Q$-Gorenstein pair, and let $D$ be a big $\Q$-Cartier $\Q$-divisor on $X$.
Then
\[ \Bm (D) \subset \bigcup_{m\in\Z_{>0}} \Zero(\J(X,\Delta,\|mD\|)), \]
\[ \bigcup_{m\in\Z_{>0}} \Zero(\J(X,\Delta,\|mD\|)) \setminus\Zero(\J(X,\Delta)) \subset \NNef(D). \]
\end{prop}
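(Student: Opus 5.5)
For the first containment we use uniform global generation (Proposition~\ref{uggmult}(2)). Fix a globally generated ample line bundle $A$ on $X$ and an integer $n\geq\max_{s\in S}\dim X_s$. Choose a Cartier divisor $M$ such that $M-(K_X+\Delta)$ is ample. Then, for every $m$ with $mD$ Cartier, $(M+mD)-(K_X+\Delta)-mD$ is ample, and so
\[\J(X,\Delta,\|mD\|)\otimes\sO_X(nA+M+mD)\]
is globally generated. Suppose $x\notin\Zero(\J(X,\Delta,\|mD\|))$ for some $m$; we may take $m$ divisible. Global generation at $x$ then gives a section of $\sO_X(m(D+\frac{1}{m}(nA+M)))$ not vanishing at $x$. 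Hence $x\notin\B(D+\frac1m(nA+M))$. It remains to arrange that $\frac1m(nA+M)$ is ample; this holds once $nA+M$ is ample, which we can ensure by enlarging $M$ by a multiple of $A$. As $m$ grows, $\frac1m(nA+M)\to0$ in $N^1(X/S)_{\R}$. So if $x$ lies in $\bigcap_{m}$ of the complements, then $x\notin\B(D+A_m)$ for a sequence of ample $A_m\to0$. By Remark~\ref{baselocusfacts}(3), together with the monotonicity $\B(D+A')\subset\B(D+A_m)$ when $A'-A_m$ is ample, we get $x\notin\Bm(D)$.

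The subtlety is that we need $x\notin\Zero(\J(X,\Delta,\|mD\|))$ for all large $m$, not just one. This follows because the zero loci increase along divisible $m$: $\J(\|kmD\|)\subset\J(\|mD\|)$. Indeed, if $x$ lies in the union $\bigcup_m\Zero(\J(X,\Delta,\|mD\|))$ then by monotonicity it lies in $\Zero$ for all sufficiently divisible large $m$. Hence the contrapositive works with $m$ ranging over a cofinal divisible sequence.

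For the second containment, take $x\in\Zero(\J(X,\Delta,\|mD\|))\setminus\Zero(\J(X,\Delta))$. Then $(X,\Delta)$ is klt near $x$, but for every $k$ and every $G\in|kmD|$, $(X,\Delta+\frac1kG)$ is not klt at $x$. Take a log resolution $\pi:Y\to X$ of $(X,\Delta)$ and use Proposition~\ref{singleresol} to bound discrepancies. Suppose for contradiction that $x\notin\NNef(D)$, so every divisorial valuation $v$ with center containing $x$ has $v(\|D\|)=0$. We then want a single $G\in|kmD|$, for $k$ large, with $(X,\Delta+\frac1kG)$ klt at $x$. The standard argument, as in \cite{CdB}, is a uniform Skoda/Izumi-type bound. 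The log discrepancies of $(X,\Delta)$ at divisors over $x$ are bounded below by a positive constant times $\ord_{\m_x}$. The klt property at $x$ is then preserved under adding $\frac1k G$ whenever $\ord_{\m_x}(\frac1kG)$ is small; we can phrase this via the characterization of klt as $\J=\sO$ together with Proposition~\ref{regklt} on $Y$. So the step reduces to showing that $v_{\num}(D)=0$ for $v=\ord_{\m_x}$-type valuations on a blow-up gives $G$ with $\frac1k\ord_{x}(G)$ arbitrarily small. This follows from $v(\|mD\|)=0$ and Lemma~\ref{asyord}, applied to the exceptional divisor of the normalized blow-up of $x$ and combined with Izumi.

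The main obstacle is this uniform perturbation step: klt at $x$ must be stable under adding $tG$ with $t\,\ord_x(G)<\delta$ for a $\delta$ independent of $G$. I would prove it by pulling back to the log resolution. There, at each point $y$ over $x$, the coefficient condition $\ord_y(\pi^*(\Delta+tG)-K_{Y/X})$ is controlled by Proposition~\ref{regklt}, and $\ord_y(\pi^*G)\le c\cdot\ord_x(G)$ holds with $c$ uniform by Izumi's theorem on the finitely many exceptional divisors.
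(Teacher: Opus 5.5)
Your first containment is the paper's argument: take $H=nA+M$ in Proposition~\ref{uggmult}(2). The monotonicity detour is unnecessary, since the contrapositive hypothesis already gives $\J(X,\Delta,\|mD\|)_x=\sO_{X,x}$ for every $m$. Your reduction from $x\notin\NNef(D)$ to divisors $G\in|kmD|$ with $\frac1k\ord_{\m_x}(G)$ small is also fine. You do not even need Izumi here, because $\ord_{\m_x}(G)\le v(G)/v(\m_x)$ for any divisorial $v$ centered at $x$.

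The gap is the step you yourself flag: uniform stability of klt at $x$ under perturbations $tG$ with $t\,\ord_{\m_x}(G)<\delta$. None of your three justifications establishes it.

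(i) A lower bound $A_{(X,\Delta)}(E)\ge c\,\ord_E(\m_x)$ gives nothing. What you need is $A_{(X,\Delta)}(E)>t\,\ord_E(G)$, and the available inequality $\ord_E(G)\ge \ord_{\m_x}(G)\,\ord_E(\m_x)$ goes the wrong way. You would need an upper bound $\ord_E(G)\le C\,A_{(X,\Delta)}(E)\,\ord_{\m_x}(G)$, uniform in $E$, which is a much deeper Izumi-type estimate.

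(ii) On a log resolution with $K_Y+\Delta_Y=\pi^*(K_X+\Delta)$, Proposition~\ref{regklt} requires $\ord_y(\Delta_Y^{+}+t\pi^*G)<1$. But $\ord_y(\Delta_Y^{+})$ can exceed $1$ where components meet, even though $(Y,\Delta_Y)$ is sub-klt. This already happens for $\pi=\mathrm{id}$ and $\Delta=\frac{9}{10}(L_1+L_2)$, with $L_1,L_2$ transversal lines through a smooth point. You need an extra interpolation: write $\Delta_Y^{+}+t\pi^*G$ as a convex combination of $\Delta_Y^{+}/(1-\epsilon)$ and $t\pi^*G/\epsilon$, and use that klt is a convex condition.

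(iii) Most seriously, a bound $\ord_y(\pi^*G)\le c\,\ord_{\m_x}(G)$ uniform over the infinitely many points $y\in\pi^{-1}(x)$ does not follow from Izumi's theorem for the finitely many exceptional divisors. The quantity $\ord_y(\pi^*G)$ contains the multiplicity at $y$ of the strict transform of $G$, which the numbers $\ord_{E_i}(G)$ do not control. Moreover, $\pi^{-1}(x)$ need not be a divisor.

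The paper instead defers this step to Proposition~\ref{regklt} applied on a regular model, together with the arguments of \cite{CdB}. Such an argument needs no uniformity in $y$; one way to run it is as follows.
\begin{itemize}
\item Fix a log resolution $\pi\colon Y\to X$ of $(X,\Delta)$. A divisorial valuation whose center contains $y\in\pi^{-1}(x)$ has center containing $x$, so no such $y$ lies in $\NNef(\pi^*D)$.
\item Let $\mathfrak{b}_{pm}$ be the base ideal of $|pmD|$; in characteristic zero it computes the terms of $\J(X,\Delta,\|mD\|)$.
\item Suppose $\J(X,\Delta,\|mD\|)_x\neq\sO_{X,x}$. Then for each $p$ the pair $(Y,\Delta_Y;\frac1p\mathfrak{b}_{pm}\sO_Y)$ fails to be sub-klt at some point of $\pi^{-1}(x)$.
\item By (ii), the closed set $Z_p=\{y\in\pi^{-1}(x)\mid \frac1p\ord_y(\mathfrak{b}_{pm}\sO_Y)\ge\epsilon\}$ is therefore nonempty. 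These sets decrease along divisibility of $p$, so by Noetherianity they share a point $y$.
\item At that point $\ord_y(\|m\pi^*D\|)\ge\epsilon>0$, hence $x\in\NNef(D)$.
\end{itemize}
A uniform statement at $x$ of the kind you want is exactly what the paper uses in mixed characteristic (Proposition~\ref{stability}). There it is proved by a BCM argument, not by Izumi estimates on a resolution.
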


\begin{proof}
We follow the same arguments as those in \cite[Lemma~4.1-Corollay~4.4]{CdB}.

First, we prove the former containment.
Suppose $x\in X\setminus\bigcup_{m\in\Z_{>0}} \Zero(\J(X,\Delta,\|mD\|)).$
Take any globally generated and ample Cartier divisor $A$ on $X$ and any ample Cartier divisor $H$ on $X$ such that $H-(K_X +\Delta)-(\max_{s\in S}\dim X_{s})A$ is nef and big.
Then 
$\J(X,\Delta,\|mD\|)\otimes\sO_{X}(mD+H)$ is globally generated for every $m\in\Z_{>0}$ by Proposition~\ref{uggmult}.
Since ${\J(X,\Delta,\|mD\|)}_{x}=\sO_{X,x}$,
$mD+H$ is globally generated at $x$.
Thus, $x\not\in\Bm(D)$.

The latter containment follows from Proposition~\ref{regklt} and the same arguments as those in the proofs of \cite[Proposition~4.2-Corollay~4.4]{CdB}.
\end{proof}

\begin{thm}[{cf.~\cite[Corollay~4.9]{CdB}}]\label{maintheoremzero}
If $X$ has only klt type singularities except for finitely many closed points, then
\[ \NNef(D/S) = \Bm(D/S) \]
for every $\R$-Cartier $\R$-divisor $D$ on $X$.
\end{thm}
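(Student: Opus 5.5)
The plan is to reduce to an affine base with $D$ big and $\Q$-Cartier, and then squeeze $\Bm(D)$ between $\NNef(D)$ and the union of zero loci of asymptotic multiplier ideals. By Proposition~\ref{easycontainment} only $\Bm(D/S)\subset\NNef(D/S)$ needs proof. Both sides are compatible with restriction to open subsets of $S$ (Propositions~\ref{bcbaselocus} and \ref{bcnnef}), so we may assume $S$ is affine. If $D$ is not $f$-pseudo-effective, then $\NNef(D/S)=X$ and there is nothing to prove. Otherwise, choose $f$-ample $A_i$ with $|A_i|\to 0$ such that each $D+A_i$ is a big $\Q$-Cartier $\Q$-divisor. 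Remark~\ref{baselocusfacts}(3) and Proposition~\ref{nneffact} then reduce the claim to $\Bm(D)\subset\NNef(D)$ for big $\Q$-Cartier $D$.

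Fix a point $x\in\Bm(D)$. Let $\Sigma\subset X$ be the finite set of closed points where $X$ fails to be of klt type.

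\textbf{Case $x\notin\Sigma$.} Since $(x\in X)$ is of klt type, there is $\Delta_x\geq0$ on $\Spec\sO_{X,x}$ with $(\sO_{X,x},\Delta_x)$ klt. I would spread $\Delta_x$ out to a $\Q$-divisor $\Delta\geq0$ on $X$, for instance by taking the closure of its components and adding a general member of a suitable linear system so that $K_X+\Delta$ becomes $\Q$-Cartier on all of $X$. This keeps $\Delta_x$ unchanged, or changes it by a general divisor with small coefficients that preserves klt-ness at $x$, so that $x\notin\Zero(\J(X,\Delta))$. The construction should work after possibly shrinking the open set; if needed I would replace $X$ by an open neighborhood of $x$ using Proposition~\ref{bcbaselocus}, but it is cleaner to keep $X$ projective and just arrange $\Delta$ globally.

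Proposition~\ref{compzero} then gives
\[
x\in\bigcup_m\Zero(\J(X,\Delta,\|mD\|))\setminus\Zero(\J(X,\Delta))\subset\NNef(D).
\]

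\textbf{Case $x\in\Sigma$ (a closed point).} By Corollary~\ref{clpt}(2) there is an irreducible closed $Z$ with $\{x\}\subsetneq Z\subset\Bm(D)$. The generic point $z$ of $Z$ is not closed, so $z\notin\Sigma$, and the previous case gives $z\in\NNef(D)$. Since $\NNef(D)$ is a union of closures of centers of divisorial valuations, it is closed under specialization of points lying in it. Hence $x\in\overline{\{z\}}\subset\NNef(D)$.

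The main obstacle is the globalization of the local boundary $\Delta_x$ so that $K_X+\Delta$ is $\Q$-Cartier on all of $X$ while $\J(X,\Delta)_x=\sO_{X,x}$. The hypothesis is only pointwise, and Proposition~\ref{compzero} needs a global log $\Q$-Gorenstein pair on a projective $X$ over affine $S$. I would first choose a Weil divisor $\Delta'$ extending $\Delta_x$ with $N(K_X+\Delta')\sim_S 0$ near $x$. I would then add $\frac1N$ times a general member of $|N(K_X+\Delta')+NH|$ for very ample $H$ and replace $\Delta$ accordingly, possibly twisting by $-H$ through $\sim_{\Q}$ adjustments. The point is to check, via a log resolution over a neighborhood of $x$ (Proposition~\ref{singleresol}), that a general member with small coefficient keeps the pair klt at $x$.
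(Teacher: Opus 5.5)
Your proof is correct and is essentially the paper's argument. The paper proves the theorem in one line by combining Proposition~\ref{easycontainment}, Proposition~\ref{compzero} at klt-type points, and Corollary~\ref{clpt}(2) together with closedness of $\NNef$ under specialization for the remaining closed points; it leaves the globalization of $\Delta_x$ implicit, as it does when citing \cite[Lemma~5.5]{HLS} in Theorem~\ref{maintheorem}.

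Your globalization sketch has a sign slip. The divisor you add must be $\frac1N G$ with $G\in|-N(K_X+\Delta')+mH|$ for $m\gg0$, not a member of $|N(K_X+\Delta')+NH|$. Choose $G$ to avoid $x$, which is possible because this reflexive sheaf is globally generated for $m\gg0$ and invertible at $x$. Then $N(K_X+\Delta)\sim mH$ and $\Delta_x$ is unchanged, so no log-resolution klt check is needed.
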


\begin{proof}
This follows from Proposition~\ref{easycontainment}, \ref{compzero}, and Corollary~\ref{clpt}(2).
\end{proof}

\subsection{Characteristic $p>0$}

Let $S$ be an integral and $F$-finite $\F_p$-scheme, and let $f:X\rightarrow S$ be a projective surjective morphism from a normal integral scheme.
In this section, we use (asymptotic) test ideals defined in Section~\ref{sectest}.

\begin{prop}[{cf.~\cite[Theorem~4.5]{Sato}}]
Suppose that $S$ is affine. Let $(X,\Delta)$ be a log $\Q$-Gorenstein pair, and let $D$ be a big $\Q$-Cartier $\Q$-divisor on $X$.
Then
\[ \Bm (D) \subset \bigcup_{m\in\Z_{>0}} \Zero(\tau(X,\Delta,\|mD\|)), \]
\[ \bigcup_{m\in\Z_{>0}} \Zero(\tau(X,\Delta,\|mD\|)) \setminus\Zero(\tau(X,\Delta)) \subset \NNef(D). \]
\end{prop}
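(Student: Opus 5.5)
The plan is to mirror the proof of Proposition~\ref{compzero}, replacing multiplier ideals by the test ideals of Section~\ref{sectest} and replacing Proposition~\ref{regklt} by the uniform stability of strong $F$-regularity under small perturbations, in the form of \cite[Corollary~3.14]{Sato}.

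\textbf{The first containment.} Let $x\notin\bigcup_m\Zero(\tau(X,\Delta,\|mD\|))$.
\begin{enumerate}
\item Fix a globally generated ample Cartier divisor $A$ on $X$.
\item Choose an ample Cartier divisor $H$ such that $H-(K_X+\Delta)-nA$ is ample, where $n>\max_{s\in S}\dim X_s$.
\item For each $m$ with $mD$ Cartier, the divisor $(mD+H-nA)-(K_X+\Delta)-mD$ is ample. Proposition~\ref{uggtest}(2) then shows that $\tau(X,\Delta,\|mD\|)\otimes\sO_X(mD+H)$ is globally generated.
\item Since $\tau(X,\Delta,\|mD\|)_x=\sO_{X,x}$, we get $x\notin\Bs|mD+H|$ for every $m$, so $x\notin\B(D+\frac1m H)$.
\item Remark~\ref{baselocusfacts}(3), applied with $A_m=\frac1m H$, gives $x\notin\Bm(D)$.
\end{enumerate}

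\textbf{The second containment.} Let $x$ be a point with $\tau(X,\Delta)_x=\sO_{X,x}$ and $x\notin\NNef(D)$. We must show $\tau(X,\Delta,\|mD\|)_x=\sO_{X,x}$ for all $m$.
\begin{enumerate}
\item Localize at $x$. The pair $(\sO_{X,x},\Delta_x)$ is strongly $F$-regular.
\item Take the blow-up of $\m_x$, normalize it, and let $v=\ord_{\m_x}$ up to normalization; this is a divisorial valuation centered at $x$. Since $x\notin\NNef(D)$, we have $v_{\num}(D)=v(\|D\|)=0$, so $v(\|mD\|)=0$ for every $m$.
\item By Sato's uniform stability (the characteristic $p$ analogue of Proposition~\ref{stability}), there is $\delta>0$ such that $(\sO_{X,x},\Delta_x+t\,\divi(r))$ is strongly $F$-regular whenever $t\cdot\ord_{\m_x}(r)<\delta$.
\item Use $v(\|mD\|)=0$ to find $k$ and $G\in|kmD|$ with $\frac1k\ord_{\m_x}(G)<\delta$. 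Here one compares $v$ with $\ord_{\m_x}$ via Izumi's theorem, or notes that $v$ is exactly the normalized order; the definition of $v(\|\cdot\|)$ gives the estimate.
\item With this $G$, $(\sO_{X,x},\Delta_x+\frac1kG)$ is strongly $F$-regular. Hence $\tau(X,\Delta+\frac1kG)_x=\sO_{X,x}$, and therefore $\tau(X,\Delta,\|mD\|)_x=\sO_{X,x}$.
\end{enumerate}

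\textbf{Main obstacle.} The delicate step is step 4 of the second containment: converting $v(\|mD\|)=0$ into a divisor with small $\ord_{\m_x}$. This is handled by $\ord_{\m_x}\le \nord_{\m_x}\le C\,v$. Step 3 also relies on the stability result holding at non-closed points and for $F$-finite local rings, which is the content of \cite[Corollary~3.14]{Sato}.
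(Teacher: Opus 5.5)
Your proposal is correct and follows the paper's route: the first containment is the argument of Proposition~\ref{compzero} with Proposition~\ref{uggtest} in place of Proposition~\ref{uggmult}, and the second is the argument in the proof of Theorem~\ref{maintheoremtestideal}, with Sato's stability result in place of Proposition~\ref{stability}.

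One small correction: $\ord_{\m_x}$ and $\nord_{\m_x}$ are not valuations in general. Take $v$ to be the order along a prime divisor with center $x$ on some normal birational model (e.g.\ a component of the exceptional divisor of the normalized blow-up of $\overline{\{x\}}$ that dominates $\overline{\{x\}}$). The bound you need, $\ord_{\m_x}(g)\le\nord_{\m_x}(g)\le v(g)/v(\m_x)$, is then elementary, and Izumi's theorem is not needed.
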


\begin{proof}
The first containment follows from Proposition~\ref{uggtest} and the same arguments as those in Proposition~\ref{compzero}.
The second containment follows from \cite[Proposition~3.18]{Sato} or Proposition~\ref{stability}, and the same arguments as those in \cite[Proposition~4.5 (2) implies (3)]{Sato}. (See also the proof of Theorem~\ref{maintheoremtestideal}.)
\end{proof}

Similarly to the case of equal characteristic zero, the following theorem is obtained.

\begin{thm}[{cf.~\cite[Corollary~4.8]{Sato}}]\label{maintheoremp}
If $X$ has only strongly $F$-regular singularities except for finitely many closed points, then
\[ \NNef(D/S) = \Bm(D/S) \]
for every $\R$-Cartier $\R$-divisor $D$ on $X$.
\end{thm}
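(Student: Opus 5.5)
The plan is to copy the proof of Theorem~\ref{maintheoremzero}, with multiplier ideals replaced by the test ideals of Section~\ref{sectest}. By Proposition~\ref{easycontainment}, $\NNef(D/S)\subset\Bm(D/S)$ holds in general, so only the reverse containment needs proof. Both loci are compatible with restriction to open subsets of $S$ (Proposition~\ref{bcbaselocus} and \ref{bcnnef}), so we may assume $S$ is affine. If $D$ is not $f$-pseudo-effective, then $\NNef(D/S)=X$ and there is nothing to prove. Otherwise we use Proposition~\ref{nneffact} and Remark~\ref{baselocusfacts}(3) with a sequence of $f$-ample divisors $A_i$ such that $|A_i|\to0$ and each $D+A_i$ is $\Q$-Cartier. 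This reduces us to the case where $D$ is a big $\Q$-Cartier $\Q$-divisor.

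Now take a point $x\in\Bm(D)$ and show $x\in\NNef(D)$. First suppose $x$ is not one of the finitely many exceptional closed points. Then $(x\in X)$ is strongly $F$-regular, so there is a $\Q$-divisor $\Delta_x\ge0$ on $\Spec\sO_{X,x}$ with $\tau(\sO_{X,x},\Delta_x)=\sO_{X,x}$. Spread $\Delta_x$ out: take its closure to get an effective $\Q$-divisor $\Delta$ on $X$ and add a general effective divisor not passing through $x$, so that $K_X+\Delta$ becomes $\Q$-Cartier globally. Since $K_X+\Delta_x$ is $\Q$-Cartier at $x$, this is possible after replacing $\Delta$ by $\Delta+\frac1N\divi(g)$ for a suitable function $g$ that is a unit at $x$. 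Test ideals localize, so $x\notin\Zero(\tau(X,\Delta))$. Applying the preceding proposition to this log $\Q$-Gorenstein pair gives
\[x\in \Bm(D)\setminus\Zero(\tau(X,\Delta))\subset\bigcup_m\Zero(\tau(X,\Delta,\|mD\|))\setminus\Zero(\tau(X,\Delta))\subset\NNef(D).\]
The main points to check are that this globalization of $\Delta_x$ is legitimate and that the preceding proposition applies to an arbitrary log $\Q$-Gorenstein pair, which it does as stated.

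Next suppose $x$ is one of the exceptional closed points. By Corollary~\ref{clpt}(2), there is an irreducible closed subset $Z$ with $\{x\}\subsetneq Z\subset\Bm(D)$. The generic point $z$ of $Z$ is not closed, so it is not exceptional. By the previous paragraph $z\in\NNef(D)$, and $\NNef(D)$ is a union of closures of centers, hence closed under specialization, so $x\in\overline{\{z\}}\subset\NNef(D)$. Finally, $z$ is non-closed in $X$, but we must make sure it is not among the finitely many exceptional closed points. This holds because $z$ is not a closed point at all.
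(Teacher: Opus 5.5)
Your proof is correct and follows the paper's route: Proposition~\ref{easycontainment}, the characteristic $p$ analogue of Proposition~\ref{compzero} applied to a boundary $\Delta$ with $\tau(X,\Delta)_x=\sO_{X,x}$, and Corollary~\ref{clpt}(2) for the finitely many bad closed points. You also spell out the globalization of $\Delta_x$, which the paper leaves implicit here (cf.\ its use of \cite[Lemma~5.5]{HLS} in Theorem~\ref{maintheorem}), but there is one slip in it: adding $\frac1N\divi(g)$ for a rational function $g$ cannot change $\Q$-Cartierness, since principal divisors are Cartier. What you need is $\frac1N G$ with $G$ an effective Weil divisor in $|mH-N(K_X+\Delta)|$ (for $H$ ample Cartier and $m\gg0$) not passing through $x$; such a $G$ exists because $\sO_X(mH-N(K_X+\Delta))$ is globally generated and invertible at $x$.
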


\subsection{Mixed characteristic}

In this section, we use (asymptotic) test ideals defined in Section~\ref{sectestmixed}.

\begin{prop}\label{uggmixedcontainment}
Let $(V,\m)$ be a DVR of mixed characteristic.
Let $X$ be a normal integral scheme projective and flat over $V$, and let $(X,\Delta)$ be a log $\Q$-Gorenstein pair. 
Then
\[ \Bm (D) \subset \bigcup_{m\in\Z_{>0}} \Zero(\tau(X,\Delta,\|mD\|)) \]
for every big $\Q$-Cartier $\Q$-divisor $D$ on $X$.
\end{prop}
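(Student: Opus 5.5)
The plan is to copy the proof of the first containment in Proposition~\ref{compzero}, with Proposition~\ref{uggmult} replaced by the mixed characteristic uniform global generation, Proposition~\ref{uggmixed}. We argue by contraposition. Fix a point $x\in X\setminus\bigcup_{m\in\Z_{>0}}\Zero(\tau(X,\Delta,\|mD\|))$; the goal is to show $x\notin\Bm(D)$. Since $X$ is projective over the affine scheme $\Spec V$, we may choose a globally generated ample Cartier divisor $A$. We then choose an ample Cartier divisor $H$ such that $H-(K_X+\Delta)-(\dim X_{\m})A$ is ample; this is possible by taking $H$ to be a large multiple of an ample divisor. If $\dim X_{\m}=0$, we use $n=1$ in what follows and impose the corresponding condition on $H$ instead.

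For each $m\in\Z_{>0}$ with $mD$ Cartier, set $M:=mD+H-nA$ with $n:=\max\{1,\dim X_{\m}\}$. Then $M-(K_X+\Delta)-mD=H-nA-(K_X+\Delta)$ is ample, hence big and nef. Proposition~\ref{uggmixed}, applied to the big divisor $mD$, therefore shows that
\[
\tau(X,\Delta,\|mD\|)\otimes\sO_X(mD+H)
\]
is globally generated. By the choice of $x$ we have $\tau(X,\Delta,\|mD\|)_x=\sO_{X,x}$. Hence some global section of $\sO_X(mD+H)$ coming from the subsheaf does not vanish at $x$, and so $x\notin\Bs|m(D+\tfrac1m H)|$.

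The remaining step is to pass from this family of base loci to $\Bm(D)$, and this is where some care is needed. By Remark~\ref{baselocusfacts}(3), with $A_m:=\tfrac1m H$, we have $\Bm(D)=\bigcup_m\Bm(D+\tfrac1m H)\subset\bigcup_m\B(D+\tfrac1m H)$. It is not enough to know $x\notin\Bs|mD+H|$ for a single $m$, since $\B(D+\tfrac1mH)$ is the intersection over multiples $k$ of $\Bs|k(mD+H)|$. We therefore use $\B(D+\tfrac1m H)\subset\Bs|mD+H|$, which holds because $\B$ is an intersection that includes the $k=1$ term. Hence $x\notin\B(D+\tfrac1m H)$ for every $m$ divisible enough that $mD$ is Cartier. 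Since $|\tfrac1m H|\to0$ along these $m$ as well, Remark~\ref{baselocusfacts}(3) applies to this subsequence and gives $x\notin\Bm(D)$. The only other point to check is that the hypotheses of Proposition~\ref{uggmixed} (flatness, bigness of $mD$, the log $\Q$-Gorenstein condition) are all inherited directly from the setup.
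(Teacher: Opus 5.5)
Your proposal is correct and takes the same route as the paper. The paper also deduces the statement from the uniform global generation of Proposition~\ref{uggmixed}, taking $M=mD+H-nA$ with $H$ a sufficiently positive ample divisor, and then repeats the argument for the first containment of Proposition~\ref{compzero}. Your extra step, passing from $x\notin\Bs|mD+H|$ for all sufficiently divisible $m$ to $x\notin\Bm(D)$ via $\Bm(D)=\bigcup_m\Bm(D+\tfrac1m H)\subset\bigcup_m\B(D+\tfrac1m H)$, just makes explicit what the paper compresses into ``Thus, $x\notin\Bm(D)$''.
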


\begin{proof}
This follows from Proposition~\ref{uggmixed} and the same arguments as those in Proposition~\ref{compzero}.
\end{proof}

\begin{thm}\label{maintheoremtestideal}
With notation as above,
\[
\NNef(D)\setminus\Zero(\tau(X,\Delta))
=\Bm(D)\setminus\Zero(\tau(X,\Delta))
=\bigcup_{m\in\Z_{>0}} \Zero(\tau(X,\Delta,\|mD\|))\setminus\Zero(\tau(X,\Delta)).
\]
\end{thm}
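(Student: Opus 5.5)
We prove the cyclic chain of containments
\[
\NNef(D)\subset\Bm(D)\subset\bigcup_{m}\Zero(\tau(X,\Delta,\|mD\|))
\]
together with
\[
\bigcup_{m}\Zero(\tau(X,\Delta,\|mD\|))\setminus\Zero(\tau(X,\Delta))\subset\NNef(D).
\]
Removing $\Zero(\tau(X,\Delta))$ from the first chain and combining it with the second containment gives all the equalities. The first containment is Proposition~\ref{easycontainment}, and the second is Proposition~\ref{uggmixedcontainment}. It therefore remains to prove the last one. Equivalently: if $x\notin\NNef(D)$ and $\tau(X,\Delta)_x=\sO_{X,x}$, then $\tau(X,\Delta,\|mD\|)_x=\sO_{X,x}$ for every $m$.

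Fix such $x$ and $m$. Since $x\notin\NNef(D)$, we have $v(\|mD\|)=m\,v_{\num}(D)=0$ for every divisorial valuation $v$ whose center contains $x$ in its closure. In particular this holds for $v=\ord_E$, where $E$ is an exceptional divisor over $x$ computing the $\m_x$-adic order, for instance from the normalized blow-up of $\m_x$. Writing $\ord_{\m_x}$ for the order in $\sO_{X,x}$, and using Izumi's theorem to compare $\ord_{\m_x}$ with $\ord_E$ up to a constant, we get the following. For every $\epsilon>0$ there are $l$ and $G\in|lmD|$ with $\ord_{\m_x}(G)/l<\epsilon$. Here I use that the asymptotic order equals $\inf_l \ord_E(G)/l$ over $G\in|lmD|$ (Lemma~\ref{asyord} and Remark~\ref{ordrem}, with $S$ affine).

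Now split into two cases. If $x$ lies over the generic point of $V$, then by Proposition~\ref{uniftest}(2) the statement is a statement about multiplier ideals on $X[1/p]$. The argument of Proposition~\ref{compzero} applies there: one uses the small-perturbation stability of klt at $x$, in the form of \cite{CdB} via Proposition~\ref{regklt} on a log resolution. If $x$ lies in the closed fiber $X_\m$, then by Proposition~\ref{uniftest}(3) the condition $\tau(X,\Delta)_x=\sO_{X,x}$ says exactly that $(\widehat{\sO_{X,x}},\widehat{\Delta_x})$ is BCM-regular. Let $\delta>0$ be given by Proposition~\ref{stability} for this pair, and write $G$ locally at $x$ as $\divi(r)$. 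Choose $G$ with $\ord_{\m_x}(G)/l<\delta$; this is legitimate because completion preserves $\ord_{\m}$. Then $(\widehat{\sO_{X,x}},\widehat{\Delta_x}+\tfrac1l\divi(r))$ is BCM-regular. Applying Proposition~\ref{uniftest}(3) again gives $\tau(X,\Delta+\tfrac{1}{l}G)_x=\sO_{X,x}$. Since $\tau(X,\Delta,\|mD\|)$ contains $\tau(X,\Delta+\tfrac1{l}G)$ for $G\in|lmD|$, the claim follows.

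The main obstacle is getting the order of $G$ at $x$ small, not just its order along a single divisor. Vanishing of $v(\|D\|)$ for one divisorial valuation centered at $x$ gives small $\ord_E$, and Izumi's inequality is what upgrades this to small $\ord_{\m_x}$. A second point needs checking. Proposition~\ref{stability} is stated for $\ord_\m$ of an element $r\in\widehat{\m}$, where $r$ comes from $G$ via a local trivialization of $\sO_X(lmD)$ at $x$. So I must verify that the perturbing divisor $\tfrac1l G$ is exactly of the form $t\cdot\divi(r)$ with $t\cdot\ord(r)<\delta$, and that this $t$ is compatible with the definition of the asymptotic ideal, where the level is $lm$ and the coefficient is $\tfrac1l$.
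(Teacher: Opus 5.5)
Your proposal is correct and follows the paper's proof. You reduce via Propositions~\ref{easycontainment} and \ref{uggmixedcontainment} to the last containment. You then treat generic-fiber points through Proposition~\ref{uniftest}(2) and Proposition~\ref{compzero}, and closed-fiber points through Proposition~\ref{uniftest}(3) and Proposition~\ref{stability}. The paper argues by contradiction with $m=1$, while you argue directly with $G\in|lmD|$ and coefficient $\tfrac1l$; this is the same argument.

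One small correction. Passing from small $\ord_E(G)/l$ to small $\ord_{\m_x}(G)/l$ needs only the trivial inequality $\ord_E(r)\geq \ord_E(\m_x)\cdot\ord_{\m_x}(r)$ for a divisor $E$ with center $x$, not Izumi's theorem. The nontrivial direction of Izumi is what the paper uses inside the proof of Proposition~\ref{stability}.
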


\begin{proof}
By Proposition~\ref{easycontainment} and \ref{uggmixedcontainment}, it suffices to show that
\[ \Zero(\tau(X,\Delta,\|mD\|))\setminus\Zero(\tau(X,\Delta)) \subset \NNef(D). \]
We may assume that $m=1$ since $\NNef(D)=\NNef(mD)$.
Suppose that there exists $x\in \Zero(\tau(X,\Delta,\|D\|))\setminus (\Zero(\tau(X,\Delta))\cup \NNef(D))$.
When $x$ is a point of the generic fiber of $f$,
\[
x\in \Zero(\J(X_{K(V)},\Delta_{K(V)},\|D_{K(V)}\|))\setminus (\Zero(\J(X_{K(V)},\Delta_{K(V)}))\cup \NNef(D_{K(V)}))
\]
by Proposition~\ref{uniftest}(2) and \ref{bcnnef}.
This contradicts Proposition~\ref{compzero}.
Thus, we may assume that $x$ is a point of the closed fiber of $f$.
Since $x\not\in\NNef(D)$, for any $\delta\in\Q_{>0}$, there exist $m\in\Z_{>0}$ and $G\in|mD|$ such that $\frac{1}{m}\ord_{\m_x}(G)<\delta$.
Since $x\not\in\Zero(\tau(X,\Delta,\|D\|))$,
${\tau(X,\Delta+\frac{1}{m}G)}_{x}\subset\tau(X,\Delta,\|D\|)_{x}\subsetneq \sO_{X,x}$.
By Proposition~\ref{uniftest}(3),
$(\widehat{\sO_{X,x}},\widehat{\Delta_x})$ is BCM-regular and
$(\widehat{\sO_{X,x}},\widehat{\Delta_x}+\frac{1}{m}\widehat{G_x})$ is not BCM-regular.
This contradicts Proposition~\ref{stability}.
\end{proof}

\begin{thm}\label{maintheorem}
Let $S$ be a Dedekind scheme of characteristic zero, and let $f:X \rightarrow S$ be a projective surjective morphism from a normal integral scheme.
Suppose that every non-closed point $x\in X$ satisfies one of the following conditions:
\begin{enumerate}
\item $(x\in X)$ is of equal characteristic zero and of klt type.
\item $(x\in X)$ is of mixed characteristic and of BCM-regular type.
\end{enumerate}
Then
\[ \NNef(D/S)=\Bm(D/S) \]
for every $\R$-Cartier $\R$-divisor $D$ on $X$.
\end{thm}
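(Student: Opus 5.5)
By Proposition~\ref{easycontainment}, only the inclusion $\Bm(D/S)\subset\NNef(D/S)$ needs proof. We first reduce to a local base. Both loci are compatible with localization of the base: Proposition~\ref{bcbaselocus} for $\Bm$, and Proposition~\ref{bcnnef} for $\NNef$. So it suffices to check the inclusion after base change to $\Spec\sO_{S,\xi}$ for each point $\xi\in S$.

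If $\xi$ is the generic point, $X_\xi$ is a normal projective variety over a field of characteristic zero. Every point of $X_\xi$ is a non-closed point of $X$, so condition (1) makes $X_\xi$ of klt type everywhere. Theorem~\ref{maintheoremzero} then gives the equality.

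Otherwise $\sO_{S,\xi}=V$ is a DVR, and we may assume $X$ is projective over $V$. Since $X$ is integral and dominates $S$, it is flat over $V$. If the residue characteristic of $V$ is zero, every non-closed point is of klt type and Theorem~\ref{maintheoremzero} applies as before. So assume $V$ has mixed characteristic $(0,p)$.

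Next we reduce to $\Q$-divisors. By Remark~\ref{baselocusfacts}(3) and Proposition~\ref{nneffact}, we can replace $D$ by $D+A_i$ with $A_i$ ample and small, arranged so that $D+A_i$ is a big $\Q$-Cartier $\Q$-divisor. If $D$ is not pseudo-effective, then $\NNef(D)=X$ and there is nothing to prove. Thus we may assume $D$ is big and $\Q$-Cartier.

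Now take a point $x\in\Bm(D)$ with $x$ non-closed; the closed points will be handled at the end. By hypothesis, $x$ is either of klt type or of BCM-regular type. In either case we want a log $\Q$-Gorenstein pair $(X,\Delta)$, defined globally, with $x\notin\Zero(\tau(X,\Delta))$. Then Theorem~\ref{maintheoremtestideal} gives $x\in\NNef(D)$.

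Constructing such a $\Delta$ from the local boundary $\Delta_x$ is the main obstacle. The plan is to choose a canonical divisor $K_X$ and take $\Delta$ to be the closure in $X$ of $\Delta_x$ together with an extra boundary. This boundary is needed to make $K_X+\Delta$ $\Q$-Cartier globally, while keeping the pair unchanged near $x$ in the sense that $\Delta_x$ is the localization.

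Concretely, the steps are as follows.
\begin{enumerate}
\item Write $m(K_X+\overline{\Delta_x})$ locally at $x$ as $\divi(g)$.
\item Set $\Delta:=\overline{\Delta_x}+\frac1m\divi(g)-\frac1m\divi(g)$, correcting by $-(K_X+\overline{\Delta_x})+\frac{1}{m}(\text{an effective divisor }\sim mH-m(K_X+\overline{\Delta_x}))$ with $H$ ample. The effective divisor is chosen with support avoiding $x$, which is possible by Serre vanishing and global generation. This makes $K_X+\Delta\sim_\Q H$ $\Q$-Cartier and gives $\Delta_x$ at $x$.
\item Check that $\tau(X,\Delta)$ is trivial at $x$. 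In the mixed characteristic case, $x$ lies in the closed fiber, so this follows from Proposition~\ref{uniftest}(3) and BCM-regularity of $(\widehat{\sO_{X,x}},\widehat{\Delta_x})$. In the equal characteristic case, $x$ lies in $X[1/p]$, so it follows from Proposition~\ref{uniftest}(2) and the klt condition.
\end{enumerate}
The test ideal of \cite{BMP2} requires $X$ of finite type and flat over $V$, which holds here.

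Finally, the closed points. If a closed point $x$ lies in $\Bm(D)$, Corollary~\ref{clpt}(2) gives an irreducible closed $Z$ with $\{x\}\subsetneq Z\subset\Bm(D)$. The generic point of $Z$ is non-closed, so by the previous step it lies in $\NNef(D)$. Its closure is then contained in $\NNef(D)$, because $\NNef(D)$ is a union of closures of centers: a point of it lies in some $\overline{c_X(v)}$, whose closure is itself. Hence $x\in\NNef(D)$, and the theorem follows.
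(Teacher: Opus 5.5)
Your proposal is correct and follows essentially the same route as the paper: reduce to a DVR base with $D$ big and $\Q$-Cartier, handle closed points via Corollary~\ref{clpt}(2), and for a non-closed point globalize the local boundary to a log $\Q$-Gorenstein pair $(X,\Delta)$ with $\tau(X,\Delta)_x=\sO_{X,x}$, then apply Theorem~\ref{maintheoremtestideal} together with Proposition~\ref{uniftest}. The differences are minor: you build the global boundary by hand where the paper cites \cite[Lemma~5.5]{HLS}, and you send klt-type points over a mixed-characteristic DVR through Proposition~\ref{uniftest}(2) instead of Theorem~\ref{maintheoremzero}; your displayed formula for $\Delta$ is garbled, but the intended $\Delta=\overline{\Delta_x}+\frac{1}{m}G$, with $G\in|NH-m(K_X+\overline{\Delta_x})|$ chosen to avoid $x$, is exactly what is needed.
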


\begin{proof}
We may assume that $S$ is the spectrum of a DVR $(V,\m)$ of characteristic zero and $D$ is big $\Q$-Cartier by Proposition \ref{bcbaselocus}, \ref{bcnnef}, \ref{nneffact}, and Remark~\ref{baselocusfacts}(3).
By Corollary~\ref{clpt}(2), it suffices to show that for every non-closed point $x\in X$, if $x\in\Bm(D)$, then $x\in\NNef(D)$.
When $(x\in X)$ is of equal characteristic zero, this follows from Condition (1) and Corollary~\ref{maintheoremzero}.
Thus, we may assume that $(x\in X)$ is of mixed characteristic.
By Condition (2) and \cite[Lemma~5.5]{HLS}, there exists a $\Q$-divisor $\Delta\geq 0$ on $X$ such that $(X,\Delta)$ is log $\Q$-Gorenstein and $(\widehat{\sO_{X,x}},\widehat{\Delta_x})$ is BCM-regular.
Therefore, $x\in\NNef(D)$ follows from Proposition~\ref{uniftest}(3) and Theorem~\ref{maintheoremtestideal}.
\end{proof}

\begin{cor}\label{3dimkltcase}
Let $S$ be a Dedekind scheme of characteristic zero whose residue fields of positive characteristic are $F$-finite and of characteristic $p>5$, and let $f:X \rightarrow S$ be a projective surjective morphism from a normal integral scheme. Suppose that $\dim X=3$ and $X$ has only klt type singularities except for finitely many closed points.
Then
\[ \NNef(D/S)=\Bm(D/S) \]
for every $\R$-Cartier $\R$-divisor $D$ on $X$.
\end{cor}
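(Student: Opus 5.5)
The plan is to deduce the statement from Theorem~\ref{maintheorem}: we check that every non-closed point $x\in X$ satisfies condition (1) or (2) there. The reductions in the proof of Theorem~\ref{maintheorem} (via Propositions~\ref{bcbaselocus} and \ref{bcnnef}) are local on $S$. The residue field of $S$ at a closed point is either of characteristic $0$ or of characteristic $p>5$ and $F$-finite. By hypothesis, the finitely many closed points where $X$ may fail to be of klt type can be ignored. Corollary~\ref{clpt}(2) already handles closed points, so we only need to treat non-closed points $x$. At such a point $\sO_{X,x}$ is of klt type.

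We split by the characteristic of $\kappa(x)$. If $x$ is of equal characteristic zero, condition (1) holds directly. So suppose $x$ lies over a closed point $s\in S$ with residue characteristic $p>5$. Since $\dim X=3$ and $X\to S$ is flat with $x$ non-closed in a fiber of dimension $2$, the local ring $\sO_{X,x}$ has dimension at most $2$. It is of klt type, i.e.\ there is $\Delta_x\geq 0$ with $(\sO_{X,x},\Delta_x)$ klt.

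We need $(\widehat{\sO_{X,x}},\widehat{\Delta_x})$ to be BCM-regular, possibly after modifying $\Delta_x$.

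\begin{itemize}
\item In dimension $1$, $\sO_{X,x}$ is a DVR and BCM-regularity is clear.
\item In dimension $2$, a klt surface singularity is $\Q$-Gorenstein (Lipman), so we may take $\Delta_x=0$.
\end{itemize}

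Remark~\ref{lowdim}(1) (from \cite[Theorem~7.11]{MST}) then gives BCM-regularity of $\widehat{\sO_{X,x}}$, since the zero boundary has standard coefficients. This requires $p>5$ and $F$-finite residue field, which is exactly where the hypotheses enter: the plt blow-up has as exceptional divisor a log del Pano curve pair, which is globally $F$-regular for $p>5$, and Lemma~\ref{localglobal} applies. One must also check that the klt property of the localization of the excellent scheme passes to the completion, and that the different $\Delta_E$ has standard coefficients. We would take both from \cite{MST}.

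The main obstacle is this dimension-$2$ mixed characteristic step. We must make sure the results of \cite{MST} apply to $\sO_{X,x}$, whose residue field $\kappa(x)$ is a function field of transcendence degree $1$ over $\kappa(s)$. $F$-finiteness is preserved under finite type extensions, so that is fine. Once condition (2) holds at every such $x$, Theorem~\ref{maintheorem} yields $\NNef(D/S)=\Bm(D/S)$.
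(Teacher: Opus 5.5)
Your proposal is correct and follows the paper's own route. Like the paper, you reduce to Theorem~\ref{maintheorem} by noting that every non-closed point of mixed characteristic has $\dim\sO_{X,x}\le 2$ and is klt, and then invoke \cite[Theorem~7.11]{MST} to get BCM-regularity. Your additional checks fill in details the paper leaves implicit: the flatness dimension count, the $\Q$-Gorensteinness of klt surface singularities via Lipman (so that one may take $\Delta_x=0$), and the $F$-finiteness of $\kappa(x)$.
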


\begin{proof}
Let $x\in X$ be a non-closed point.
When $(x\in X)$ is of mixed characteristic,
by \cite[Theorem~7.11]{MST},
$\sO_{X,x}$ is BCM-regular
since $\sO_{X,x}$ is of residue characteristic $p>5$, at most $2$-dimensional, and klt. Thus, the statement follows from Theorem~\ref{maintheorem}.
\end{proof}

\bibliographystyle{alpha}
\bibliography{non_nef_loci}

\end{document}